\title[Frobenius-Schur theorem]{Frobenius-Schur theorem for $C^*$-categories}
\author{Kenichi Shimizu}
\numberwithin{equation}{section}
\newtheorem{counter}            {}[section]
\theoremstyle{definition}
\newtheorem{definition}         [counter]{Definition}
\newtheorem{problem}            [counter]{Problem}
\theoremstyle{plain}
\newtheorem{lemma}              [counter]{Lemma}
\newtheorem{proposition}        [counter]{Proposition}
\newtheorem{theorem}            [counter]{Theorem}
\theoremstyle{remark}
\newtheorem{remark}             [counter]{Remark}
\DeclareMathOperator{\id}{id}
\DeclareMathOperator{\Trace}{Tr}
\newcommand{\Irr}{\mathrm{Irr}}
\newcommand{\Obj}{\mathrm{Obj}}
\newcommand{\Hom}{\mathrm{Hom}}
\newcommand{\End}{\mathrm{End}}
\newcommand{\Rep}{\mathrm{Rep}}
\newcommand{\fd}{\mathit{fd}}
\newcommand{\fdRep}{\mathrm{Rep}_{\fd}}
\newcommand{\fdCorep}{\mathrm{Corep}_{\fd}}
\newcommand{\Mod}  [1]{{{}_{#1}\mathcal{M}}}
\newcommand{\fdMod}[1]{\Mod{#1}_\mathit{fd}}
\begin{document}

\begin{abstract}
  We generalize the Frobenius-Schur theorem to $C^*$-categories. From this category-theoretical point of view, we introduce the notions of real, complex and quaternionic representations of Hopf $C^*$-algebras. Based on these definitions, we give another type of the Hopf-algebraic analogue of the Frobenius-Schur theorem, originally due to Linchenko and Montgomery. Also given are similar results for weak Hopf $C^*$-algebras, table algebras and compact quantum groups.
\end{abstract}

\maketitle

\section{Introduction}
\label{sec:introduction}

\subsection{Frobenius-Schur theorem for compact groups}

We work over the field $\mathbb{C}$ of complex numbers. Let $G$ be a compact group, and let $V$ be a finite-dimensional irreducible continuous representation of $G$ with character $\chi_V$. We say that $V$ is {\em real} if it admits a basis $\{ v_i \}_{i = 1}^n$ such that the matrix representation $\rho: G \to GL_n(\mathbb{C})$ with respect to $\{ v_i \}$ has the following property:
\begin{equation*}
  \rho(G) \subset GL_n(\mathbb{R}).
\end{equation*}
Note that if $V$ is real, then $\chi_V(G) \subset \mathbb{R}$. Following, $V$ is said to be {\em pseudo-real}, or {\em quaternionic}, if it does not admit such a basis but $\chi_V(G) \subset \mathbb{R}$. Finally, $V$ is said to be {\em complex} if it is neither real nor quaternionic. The {\em Frobenius-Schur theorem} gives a way to determine whether $V$ is real, complex or quaternionic: Define the {\em Frobenius-Schur indicator} of $V$ by
\begin{equation}
  \label{eq:intro-FS-ind}
  \nu(V) = \int_G \chi_V(g^2) d \mu(g),
\end{equation}
where $\mu$ is the normalized Haar measure on $G$. Then the theorem states:
\begin{equation}
  \label{eq:intro-FS-ind-RCH}
  \nu(V) =
  \begin{cases}
    + 1 & \text{\rm if $V$ is real}, \\
    \ 0 & \text{\rm if $V$ is complex}, \\
    - 1 & \text{\rm if $V$ is quaternionic}.
  \end{cases}
\end{equation}  

\subsection{Generalizations of the Frobenius-Schur indicator}

In this paper, we give a generalization of the Frobenius-Schur theorem to $C^*$-categories. We should note that the Frobenius-Schur indicator has been generalized in various contexts \cite{MR1657800,MR2029790,MR1078503,MR1808131,MR2104908,MR2313527,MR2381536,MR2366965,MR2725181,MR2879228,MR2095575,KenichiShimizu:2012-11}. Before we describe a summary of our results, we briefly review some of other existing generalizations and raise some questions.

We first mention the result of Linchenko and Montgomery \cite{MR1808131}. For an irreducible representation $V$ of a finite-dimensional semisimple Hopf algebra $H$ (over $\mathbb{C}$), they defined the Frobenius-Schur indicator of $V$ by
\begin{equation}
  \label{eq:intro-FS-ind-Hopf}
  \nu(V) = \chi_V(\Lambda_{(1)} \Lambda_{(2)}),
\end{equation}
where $\chi_V$ is the character of $V$ and $\Lambda_{(1)} \otimes \Lambda_{(2)} \in H \otimes H$ is the comultiplication of the Haar integral $\Lambda \in H$ (in the Sweedler notation). The main result of \cite{MR1808131} is then described as follows:
\begin{enumerate}
\item $\nu(V)$ is one of $+1$, $0$ or $-1$. Moreover, $\nu(V) \ne 0$ if and only if there exists a non-degenerate $H$-invariant bilinear form $\beta: V \times V \to \mathbb{C}$.
\item Suppose $\nu(V) \ne 0$. If $\beta$ is a non-degenerate $H$-invariant bilinear form on $V$, then $\beta(w, v) = \nu(V) \beta(v, w)$ for all $v, w \in V$. In other words, $\beta$ is symmetric if $\nu(V) = +1$ and is skew-symmetric if $\nu(V) = -1$.
\end{enumerate}
One may wonder if this result is a generalization of~\eqref{eq:intro-FS-ind-RCH}. In the case where $H$ is the group algebra, it is not so difficult to derive \eqref{eq:intro-FS-ind-RCH} from the above result. In general, the notion a real, a complex and a quaternionic representation of $H$ is not clear; thus we should rather mention the following problem:

\begin{problem}
  \label{problem:1}
  When can we define a real, a complex and a quaternionic representation of a Hopf algebra? If appropriate definitions are given, how they relate to the Frobenius-Schur indicator?
\end{problem}

Putting this problem aside for a moment, we mention some category-theoretical generalizations of the Frobenius-Schur indicator \cite{MR1657800,MR2029790,MR2313527,MR2381536,MR2366965,MR2725181,KenichiShimizu:2012-11}. In \cite{KenichiShimizu:2012-11}, the author introduced the Frobenius-Schur indicator for categories with duality to unify various generalizations of the Frobenius-Schur theorem. However, the results of \cite{KenichiShimizu:2012-11} describe how the Frobenius-Schur indicator relates to the existence of a kind of invariant bilinear forms and, in particular, are not formulated like \eqref{eq:intro-FS-ind-RCH} (see \cite[Remark~1.4]{KenichiShimizu:2012-11}). The next problem is:

\begin{problem}
  \label{problem:2}
  What is a category-theoretical counterpart of~\eqref{eq:intro-FS-ind-RCH}?
\end{problem}

Namely, we would like to introduce appropriate definitions of `real', `complex' and `quaternionic' objects in some category-theoretical way and then show that the Frobenius-Schur indicator in the sense of \cite{KenichiShimizu:2012-11} detects whether a simple object is real, complex or quaternionic.

\subsection{Summary of results}

Keeping these problems in mind, we now give a summary of our results. Let $\mathcal{A}$ be a $\mathbb{C}$-linear abelian category. Moreover, we assume that $\mathcal{A}$ is {\em locally finite-dimensional} in the following sense:
\begin{equation*}
  \dim_{\mathbb{C}} \Hom_{\mathcal{A}}(X,Y) < \infty \quad (X, Y \in \mathcal{A}).
\end{equation*}

In \S\ref{sec:dual-structures}, we introduce the notion of a {\em dual structure} for $\mathcal{A}$ and an equivalence relation between them. For each dual structure $\mathcal{D}$ for $\mathcal{A}$, we define a function
\begin{equation*}
  \nu_{\mathcal{D}}: \Obj(\mathcal{A}) \to \mathbb{Z},
\end{equation*}
where $\Obj(\mathcal{A})$ is the isomorphism classes of objects of $\mathcal{A}$. Following \cite{KenichiShimizu:2012-11}, $\nu_{\mathcal{D}}$ is called the {\em Frobenius-Schur indicator} with respect to $\mathcal{D}$. We show that the function $\nu_{\mathcal{D}}$ depends only on the equivalence class of $\mathcal{D}$.

Now let $A$ be an algebra over $\mathbb{C}$. In our applications, important are dual structures for the category $\fdMod{A}$ of finite-dimensional left $A$-modules. By a {\em dual structure} for $A$, we mean a pair $(S, g)$ consisting of an anti-algebra map $S: A \to A$ and an element $g \in A$ satisfying certain conditions. Such a pair $(S, g)$ yields a dual structure for $\fdMod{A}$. In \S\ref{subsec:DS-alg}, we recall from  \cite{KenichiShimizu:2012-11} a formula of the Frobenius-Schur indicator with respect to this type of dual structures.

In \S\ref{sec:real-structures}, we introduce the notion of a {\em real structure} for $\mathcal{A}$ and an equivalence relation between them. Given a real structure $\mathcal{J}$ for $\mathcal{A}$, the {\em $\mathcal{J}$-signature} $\sigma_{\mathcal{J}}(X) \in \{ 0, \pm 1 \}$ is defined for each simple object $X \in \mathcal{A}$ according to whether there exists a certain type of isomorphisms. Taking the $\mathcal{J}$-signature is a function
\begin{equation*}
  \sigma_{\mathcal{J}}: \Irr(\mathcal{A}) \to \{ 0, \pm 1 \},
\end{equation*}
where $\Irr(\mathcal{A})$ is the class of isomorphism classes of simple objects of $\mathcal{A}$. We show that the function $\sigma_{\mathcal{J}}$ depends only on the equivalence class of $\mathcal{J}$.

A {\em real form} of an algebra $A$ is an $\mathbb{R}$-subalgebra $A_0 \subset A$ such that $A = A_0 \oplus \mathbf{i} A_0$, where $\mathbf{i} = \sqrt{-1}$. In \S\ref{subsec:RF-alg}, we first observe that a real form of $A$ yields a real structure for $\fdMod{A}$. We say that a simple module $V \in \fdMod{A}$ is said to be {\em real}, {\em complex} and {\em quaternionic} with respect to $A_0$ if $\sigma_{\mathcal{J}}(V)$ is equal to $+1$, $0$ and $-1$, respectively, where $\mathcal{J}$ is the real structure for $\fdMod{A}$ obtained from $A_0$. These notions can be characterized in more familiar ways. For example, $V$ is real if and only if it admits a basis such that the matrix representation $\rho: A \to M_n(\mathbb{C})$ with respect to that basis satisfies $\rho(A_0) \subset M_n(\mathbb{R})$; see Theorems~\ref{thm:RF-sign-1} and~\ref{thm:RF-sign-2} for details.

In \S\ref{sec:FS-thm-C-star-cat}, we assume moreover that $\mathcal{A}$ is a $C^*$-category. Let $\mathsf{DS}_*(\mathcal{A})$
({\it resp}. $\mathsf{RS}_*(\mathcal{A})$) be the equivalence classes of dual ({\it resp}. real) structures for $\mathcal{A}$ which are compatible with the $*$-structure ({\em $*$-compatibility}, see Definition \ref{def:star-compatible}). There exists a bijection
\begin{equation}
  \label{eq:intro-real-dual-bij}
  \mathbb{D}: \mathsf{RS}_*(\mathcal{A}) \to \mathsf{DS}_*(\mathcal{A}),
\end{equation}
see Theorem~\ref{thm:real-vs-dual}. Let $\mathcal{J}$ be a $*$-compatible real structure for $\mathcal{A}$. The existence of the bijection suggests that the $\mathcal{J}$-signature can be computed by using the corresponding dual structure for $\mathcal{A}$ in some way. We show that the way is the Frobenius-Schur indicator: If the equivalence class of a $*$-compatible dual structure $\mathcal{D}$ for $\mathcal{A}$ corresponds to the equivalence class of $\mathcal{J}$ via~\eqref{eq:intro-real-dual-bij}, then
\begin{equation}
  \label{eq:intro-FS-thm-C-st}
  \nu_{\mathcal{D}}(X) = \sigma_{\mathcal{J}}(X)
\end{equation}
for all simple object $X \in \mathcal{A}$ (Theorem~\ref{thm:FS-thm-C-st}). This answers to Problem~\ref{problem:2}; in \S\ref{sec:FS-thm-fd-C-star}--\S\ref{sec:CQG}, we explain why \eqref{eq:intro-FS-thm-C-st} can be considered as a generalization of the Frobenius-Schur theorem.

In \S\ref{sec:FS-thm-fd-C-star}, we apply \eqref{eq:intro-FS-thm-C-st} to finite-dimensional $C^*$-algebras. Let $A$ be such an algebra, and let $S: A \to A$ be an anti-algebra map such that
\begin{equation}
  \label{eq:S-st-S-st=id}
  S(S(a)^*)^* = a \quad (a \in A).
\end{equation}
Then $A_0 = \{ a \in A \mid S(a)^* = a \}$ is a real form of $A$ (and, moreover, any real form of $A$ is obtained in this way). Now we consider the category $\mathcal{A} := \fdRep(A)$ of finite-dimensional $*$-representations of $A$. The map $S$ and the real form $A_0$ define a dual structure $\mathcal{D}$ and a real structure $\mathcal{J}$ for $\mathcal{A}$, respectively. We show that $\mathcal{D}$ and $\mathcal{J}$ are $*$-compatible and their equivalence classes correspond via \eqref{eq:intro-real-dual-bij}. Thus \eqref{eq:intro-FS-thm-C-st} implies, for all simple $V \in \Rep(A)$,
\begin{equation}
  \label{eq:intro-FS-thm-fd-st-alg}
  \nu_{\mathcal{D}}(V) =
  \begin{cases}
    + 1 & \text{if $V$ is real}, \\
    \ 0 & \text{if $V$ is complex}, \\
    - 1 & \text{if $V$ is quaternionic (with respect to $A_0$)}.
  \end{cases}
\end{equation}

Now let $A$ be a finite-dimensional Hopf $C^*$-algebra, and let $S: A \to A$ be the antipode of $A$ (which is known to satisfy \eqref{eq:S-st-S-st=id}). The right-hand side of~\eqref{eq:intro-FS-thm-fd-st-alg} is then equal to \eqref{eq:intro-FS-ind-Hopf}. Hence an answer to Problem~\ref{problem:1} is obtained: The Frobenius-Schur indicator for the Hopf algebra $A$, defined by Linchenko and Montgomery, detects whether a given simple $A$-module $V$ is real, complex or quaternionic with respect to the real form
\begin{equation*}
  A_0 = \{ a \in A \mid \text{$S(a)^* = a$, where $S: A \to A$ is the antipode} \}.
\end{equation*}
If $A = \mathbb{C}G$ is the group algebra of a finite group $G$, then $A_0$ is precisely $\mathbb{R}G$. A similar result for finite-dimensional weak Hopf $C^*$-algebras \cite{MR1726707,MR1793595}, that for table algebras \cite{MR2535395}, and their `twisted' versions are also deduced from~\eqref{eq:intro-FS-thm-fd-st-alg}.

In \S\ref{sec:CQG}, we apply \eqref{eq:intro-FS-thm-C-st} to compact quantum groups \cite{MR1616348,MR1310296,MR1431306}. Since, by definition, a representation of a compact quantum group is a comodule over its so-called quantum coordinate algebra, we first prove the results of \S\ref{subsec:DS-alg}, \S\ref{subsec:RF-alg} and \S\ref{sec:FS-thm-fd-C-star} in coalgebraic settings. Once such results are developed, we easily derive an exact quantum analogue of the Frobenius-Schur theorem for compact groups (Theorem~\ref{thm:FS-thm-CQG}).

In Appendix~A, we mention a proposition due to B\"ohm, Nill and Szlach\'anyi \cite{MR1726707} (see Proposition \ref{prop:BNS}) and related problems. Since their lemma plays a quite important role in \S\ref{sec:FS-thm-fd-C-star}, it is worth to give a new proof from the viewpoint of our theory. We prove Proposition \ref{prop:BNS} by emphasizing the `lifting problem' (Remark~\ref{rem:lifting-problem}) of a certain functor. A coalgebraic version of Proposition \ref{prop:BNS} is also proved.

\section*{Acknowledgments}

The author is supported by Grant-in-Aid for JSPS Fellows (24$\cdot$3606).

\section{Dual structures}
\label{sec:dual-structures}

\subsection{Dual structures}

Following Balmer \cite{MR2181829}, a {\em category with duality} is a triple $(\mathcal{A}, D, \eta)$ consisting of a category $\mathcal{A}$, a contravariant endofunctor $D$ on $\mathcal{A}$, and a natural isomorphism $\eta: \id_{\mathcal{A}} \to D D$ satisfying
\begin{equation}
  \label{eq:dual-str-1}
  D(\eta_X) \circ \eta_{D(X)} = \id_{D(X)} \quad (X \in \mathcal{A}).
\end{equation}
Since we will deal at the same time with many pairs $(D, \eta)$ such that $(\mathcal{A}, D, \eta)$ is a category with duality, it is convenient to introduce the following terminology:

\begin{definition}
  Let $\mathcal{A}$ be a $\mathbb{C}$-linear category. A {\em dual structure} for $\mathcal{A}$ is a pair $(D, \eta)$ of a contravariant $\mathbb{C}$-linear endofunctor $D$ on $\mathcal{A}$ and a natural isomorphism $\eta: \id_{\mathcal{A}} \to D D$ satisfying~\eqref{eq:dual-str-1}. If $\mathcal{D} = (D, \eta)$ and $\mathcal{D}' = (D', \eta')$ are dual structures for $\mathcal{A}$, then a {\em morphism} from $\mathcal{D}$ to $\mathcal{D}'$ is a natural transformation $\xi: D \to D'$ such that the following diagram commutes for all $X \in \mathcal{A}$:
  \begin{equation*}
    \begin{CD}
      X @>{\eta_X}>> D D(X) \\
      @V{\eta'_X}V{}V @V{}V{\xi_{D(X)}}V \\
      D' D' (X) @>>{D'(\xi_X)}> D' D(X)
    \end{CD}
  \end{equation*}
\end{definition}

It is trivial from the definition that $(D, \eta)$ is a dual structure for $\mathcal{A}$ if and only if the triple $(\mathcal{A}, D, \eta)$ is a category with duality such that $D$ is $\mathbb{C}$-linear.

Dual structures for $\mathcal{A}$ form a category. If dual structures $\mathcal{D} = (D, \eta)$ and $\mathcal{D}' = (D', \eta')$ are isomorphic in this category, then we say that they are {\em equivalent} and write $\mathcal{D} \sim \mathcal{D}'$. Note that a natural isomorphism $\xi: D \to D'$ is an isomorphism from $\mathcal{D}$ to $\mathcal{D}'$ if and only if
\begin{equation}
  \label{eq:dual-str-3}
  (\id_\mathcal{A}, \xi): (\mathcal{A}, D, \eta) \to (\mathcal{A}, D', \eta')
\end{equation}
is a strong duality preserving functor in the sense of Calm\`es and Hornbostel \cite{MR2520968}.

\subsection{Frobenius-Schur indicator}

Let $\mathcal{A}$ be a locally finite-dimensional $\mathbb{C}$-linear category, and let $X \in \mathcal{A}$ be an object. Given a dual structure $\mathcal{D} = (D, \eta)$ for $\mathcal{A}$, we define a linear map
\begin{equation*}
  T_{\mathcal{D}|X}: \Hom_{\mathcal{A}}(X, D(X)) \to \Hom_{\mathcal{A}}(X, D(X)),
  \quad f \mapsto \eta_Y \circ D(f).
\end{equation*}

\begin{definition}
  The {\em Frobenius-Schur indicator} of $X \in \mathcal{A}$ with respect to the dual structure $\mathcal{D}$ is defined and denoted by $\nu_D(X) = \Trace(T_{\mathcal{D}|X})$, where $\Trace$ is the trace of a linear operator.
\end{definition}

Let $\Obj(\mathcal{A})$ be the isomorphism classes of objects of $\mathcal{A}$. The value of $\nu_{\mathcal{D}}(X)$ depends on the isomorphism class of $X$. Since the square of $T_{\mathcal{D}|X}$ is the identity, its trace $\nu_{\mathcal{D}}(X)$ is an integer. Hence, taking the Frobenius-Schur indicator with respect $\mathcal{D}$ defines a map
\begin{equation*}
  \nu_{\mathcal{D}}: \Obj(\mathcal{A}) \to \mathbb{Z}, \quad X \mapsto \nu_{\mathcal{D}}(X).
\end{equation*}

Now let $\mathcal{D}' = (D', \eta')$ be another dual structure for $\mathcal{A}$. If $\mathcal{D} \sim \mathcal{D}'$, then, since $(\mathcal{A}, D, \eta)$ and $(\mathcal{A}, D', \eta')$ are equivalent as categories with duality via \eqref{eq:dual-str-3}, we have $\nu_{\mathcal{D}}(X) = \nu_{\mathcal{D}'}(X)$ for all $X \in \mathcal{A}$ \cite[Proposition 2.10]{KenichiShimizu:2012-11}. In other words:

\begin{lemma}
  \label{lem:FS-ind-equiv}
  The function $\nu_{\mathcal{D}}$ depends only on the equivalence class of $\mathcal{D}$.
\end{lemma}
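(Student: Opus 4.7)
The plan is to produce, for any equivalence $\xi : \mathcal{D} \to \mathcal{D}'$, a linear isomorphism of the finite-dimensional spaces on which $T_{\mathcal{D}|X}$ and $T_{\mathcal{D}'|X}$ act, intertwining the two operators. Once such a conjugation is in hand, equality of traces (and hence of Frobenius--Schur indicators) is immediate. Since the statement already follows from \cite[Proposition 2.10]{KenichiShimizu:2012-11} via the duality-preserving functor in \eqref{eq:dual-str-3}, the content is really to unpack that proposition in the present setting.

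First, I would define
\begin{equation*}
  \Phi_X : \Hom_{\mathcal{A}}(X, D(X)) \to \Hom_{\mathcal{A}}(X, D'(X)),
  \quad f \mapsto \xi_X \circ f.
\end{equation*}
Since $\xi_X$ is an isomorphism, $\Phi_X$ is a linear bijection whose inverse is post-composition with $\xi_X^{-1}$. This step is routine.

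Next, I would verify the intertwining identity $\Phi_X \circ T_{\mathcal{D}|X} = T_{\mathcal{D}'|X} \circ \Phi_X$. On one side, $\Phi_X(T_{\mathcal{D}|X}(f)) = \xi_X \circ D(f) \circ \eta_X$. On the other, $T_{\mathcal{D}'|X}(\Phi_X(f)) = D'(\xi_X \circ f) \circ \eta'_X = D'(f) \circ D'(\xi_X) \circ \eta'_X$. The compatibility square in the definition of a morphism of dual structures rewrites $D'(\xi_X) \circ \eta'_X$ as $\xi_{D(X)} \circ \eta_X$, and naturality of $\xi : D \to D'$ applied to the morphism $f : X \to D(X)$ rewrites $D'(f) \circ \xi_{D(X)}$ as $\xi_X \circ D(f)$. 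Combining these two rewrites yields the desired identity.

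Finally, conjugate operators have the same trace, so $\nu_{\mathcal{D}}(X) = \Trace(T_{\mathcal{D}|X}) = \Trace(T_{\mathcal{D}'|X}) = \nu_{\mathcal{D}'}(X)$, which is the assertion of the lemma. I do not anticipate a serious obstacle: both the compatibility square and naturality of $\xi$ are built into the definition of an equivalence of dual structures, and the only subtlety is remembering that $\xi : D \to D'$ is a natural transformation of contravariant functors so that naturality is applied in the right direction.
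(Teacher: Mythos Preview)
Your argument is correct and is precisely an unpacking of \cite[Proposition~2.10]{KenichiShimizu:2012-11}, which is exactly how the paper proves the lemma (by citing that proposition via the duality-preserving functor \eqref{eq:dual-str-3}). Your computation of the intertwining identity $\Phi_X \circ T_{\mathcal{D}|X} = T_{\mathcal{D}'|X} \circ \Phi_X$ using the compatibility square and naturality of $\xi$ is accurate, including the care you took with the contravariance of $D$ and $D'$.
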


We suppose moreover that $\mathcal{A}$ is an abelian category. Let $X$ be a simple object of $\mathcal{A}$. By Schur's lemma, $\Hom_{\mathcal{A}}(X, D(X))$ is one-dimensional if $X \cong D(X)$ and zero otherwise. Since $(T_{\mathcal{D}|X})^2$ is the identity, we have $\nu_{\mathcal{D}}(X) \in \{ 0, \pm 1 \}$ and
\begin{equation}
  \label{eq:FS-ind-simple-1}
  \nu_{\mathcal{D}}(X) \ne 0 \iff X \cong D(X).
\end{equation}
If $X \cong D(X)$, then $\nu_{\mathcal{D}}(X) \in \{ \pm 1 \}$ is characterized by the following formula:
\begin{equation}
  \label{eq:FS-ind-simple-2}
  D(f) \circ \eta_X = \nu_{\mathcal{D}}(X) \cdot f
  \quad (f \in \Hom_{\mathcal{A}}(X, D(X)),
\end{equation}
see \cite[Proposition~2.12]{KenichiShimizu:2012-11} for details.

\subsection{Dual structures for an algebra}
\label{subsec:DS-alg}

Given an algebra $A$ over $\mathbb{C}$, we denote by $\Mod{A}$ and $\fdMod{A}$ the $\mathbb{C}$-linear abelian category of left $A$-modules and its full subcategory consisting of finite-dimensional objects, respectively.

\begin{definition}
  By a {\em dual structure} for $A$, we mean a pair $(S, g)$ consisting of an anti-algebra map $S: A \to A$ and an invertible element $g \in A$ satisfying
  \begin{equation*}
    S(g) = g^{-1}, \quad S^2(a) = g a g^{-1} \quad (a \in A).
  \end{equation*}  
\end{definition}

A dual structure $(S, g)$ for $A$ gives rise to a dual structure for $\fdMod{A}$. To explain, we introduce some notations: We denote by $X^*$ the set of all linear maps from a vector space $X$ to $\mathbb{C}$. Given a linear map $f: X \to Y$, define
\begin{equation*}
  f^*: Y^* \to X^*, \quad f^*(\psi) = \psi \circ f \quad (\psi \in Y^*).
\end{equation*}
Now, for $X \in \fdMod{A}$, we define $D(X) \in \fdMod{A}$ to be the vector space $X^\ast$ endowed with the left $A$-module structure given by
\begin{equation}
  \label{eq:dual-module-1}
  \langle a \cdot \psi, x \rangle = \langle \psi, S(a) x \rangle \quad (a \in A, \psi \in X^\ast, x \in X).
\end{equation}
For a morphism $f: X \to Y$ in $\fdMod{A}$, set $D(f) = f^*$. $X \mapsto D(X)$ defines a $\mathbb{C}$-linear contravariant endofunctor $D$ on $\fdMod{A}$. Moreover, there is a natural isomorphism $\eta: \id \to D D$ given by
\begin{equation*}
  \langle \eta_X(x), \psi \rangle = \langle \psi, g x \rangle \quad (x \in X, \psi \in X^\ast).
\end{equation*}
The pair $(D, \eta)$ is a dual structure for $\fdMod{A}$, which will be referred to as the {\em dual structure for $\fdMod{A}$ associated with $(S, g)$}.

Recall that a {\em separability idempotent} of $A$ is an element $E = \sum_i E_i' \otimes E_i'' \in A \otimes A$ such that $\sum_i E_i' E_i'' = 1$ and $\sum_i a E_i' \otimes E_i''  = \sum_i E_i' \otimes E_i'' a$ for all $a \in A$. If such an element exists, then:

\begin{theorem}[{\cite[Theorem~3.8]{KenichiShimizu:2012-11}}]
  \label{thm:FS-ind-sep-alg}
  For all $V \in \fdMod{A}$, we have
  \begin{equation}
    \label{eq:FS-formula-sep-alg}
    \nu_{\mathcal{D}}(V) = \sum_i \chi_V(S(E_i') g E_i''),
  \end{equation}
  where $\mathcal{D}$ is the dual structure associated with $(S, g)$.
\end{theorem}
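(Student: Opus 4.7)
The plan is to reduce the computation of the trace on $\Hom_A(V, D(V))$ to a trace on the ambient space $\Hom_\mathbb{C}(V, V^\ast)$ by using the separability idempotent to build an explicit projection, and then to evaluate the resulting trace in a basis.

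Concretely, the formula $f \mapsto D(f) \circ \eta_V$ makes sense for any $f \in \Hom_\mathbb{C}(V, V^\ast)$, not just for $A$-linear $f$; this yields an operator $\tilde{T}$ on $\Hom_\mathbb{C}(V, V^\ast)$ that restricts to $T_{\mathcal{D}|V}$ on the subspace $\Hom_A(V, D(V))$. Next, I define
$$P: \Hom_\mathbb{C}(V, V^\ast) \to \Hom_\mathbb{C}(V, V^\ast), \qquad P(f)(x) = \sum_i E_i' \cdot f(E_i'' x),$$
where $\cdot$ is the $A$-action on $D(V)$ given by \eqref{eq:dual-module-1}. The bimodule identity $\sum_i a E_i' \otimes E_i'' = \sum_i E_i' \otimes E_i'' a$ implies that $P(f)$ is $A$-linear for every $f$, while $\sum_i E_i' E_i'' = 1$ implies that $P(f) = f$ whenever $f$ is $A$-linear. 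Hence $P$ is a projection onto $\Hom_A(V, D(V))$, and the standard trace-on-a-subspace identity gives
$$\nu_{\mathcal{D}}(V) = \Trace(T_{\mathcal{D}|V}) = \Trace_{\Hom_\mathbb{C}(V, V^\ast)}(P \circ \tilde{T}).$$

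To evaluate the right-hand side I pick a basis $\{ v_p \}$ of $V$ with dual basis $\{ v_p^\ast \}$ and use the corresponding basis $\{ e_{pq} \}$ of $\Hom_\mathbb{C}(V, V^\ast)$ determined by $e_{pq}(v_r)(v_s) = \delta_{pr} \delta_{qs}$. Unwinding the definition of $\eta_V$ yields $\tilde{T}(f)(x)(v) = f(v)(g x)$, and a short computation then gives
$$(P \circ \tilde{T})(e_{pq})(v_r)(v_s) = \sum_i (S(E_i'))_{ps} \, (g E_i'')_{qr},$$
where $(a)_{jk}$ denotes the matrix coefficient of the action of $a \in A$ on $V$. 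Summing the diagonal $(r,s) = (p,q)$ entries and applying cyclicity of the matrix trace on $V$ collapses the double sum into $\sum_i \chi_V(S(E_i') \, g \, E_i'')$, which is the claimed formula.

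The step demanding the most care is verifying that $P$ lands in $\Hom_A(V, D(V))$: this combines the bimodule property of $E$ with the fact that the $A$-action on $D(V)$ is twisted through $S$, and it is the only place where the hypotheses on $(S, g)$ actually enter beyond ensuring that $D$ is a well-defined contravariant endofunctor. Once it is dispatched, the remainder is a direct unwinding of definitions.
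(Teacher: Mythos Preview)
Your proof is correct. The paper does not prove this theorem itself but cites \cite[Theorem~3.8]{KenichiShimizu:2012-11}; the argument there follows essentially the same route you outline---extend $T_{\mathcal{D}|V}$ to all of $\Hom_{\mathbb{C}}(V,V^{\ast})$, use the separability idempotent to build a projection onto $\Hom_{A}(V,D(V))$, and evaluate the resulting trace in a basis of matrix units.

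One small remark on your closing paragraph: the verification that $P$ lands in $\Hom_{A}(V,D(V))$ uses only the bimodule identity for $E$ together with the fact that $D(V)$ is an $A$-module (i.e.\ that $S$ is an anti-algebra map), which you have already folded into ``$D$ is a well-defined contravariant endofunctor''. The remaining hypotheses $S(g)=g^{-1}$ and $S^{2}(a)=g a g^{-1}$ are needed only so that $\eta$ is $A$-linear and $(D,\eta)$ satisfies the duality axiom~\eqref{eq:dual-str-1}; they play no further role in your trace computation.
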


\begin{remark}
  \label{rem:character}
  To express the right-hand side of \eqref{eq:FS-formula-sep-alg} neatly, the following formulas will be used: If $V \in \fdMod{A}$ is simple and $z \in A$ is central, then
  \begin{equation*}
    \chi_V(z a) = \frac{\chi_V(z) \cdot \chi_V(a)}{\chi_V(1)},
    \quad \chi_V(z^{-1} a) = \frac{\chi_V(1) \cdot \chi_V(a)}{\chi_V(z)} \quad (a \in A).
  \end{equation*}
  Here, in the latter formula, $z$ is assumed to be invertible. These formulas follow from that the action of $z$ on $V$ is a scalar multiple by Schur's lemma.
\end{remark}

\section{Real structures}
\label{sec:real-structures}

\subsection{Real structures}

We say that a functor $F: \mathcal{A} \to \mathcal{B}$ between $\mathbb{C}$-linear categories is {\em anti-linear} if the map $F: \Hom_{\mathcal{A}}(X, Y) \to \Hom_{\mathcal{B}}(F(X), F(Y))$ induced by $F$ is an anti-linear map for all $X, Y \in \mathcal{A}$. Now let $\mathcal{A}$ be a $\mathbb{C}$-linear category.

\begin{definition}
  A {\em real structure} for $\mathcal{A}$ is a pair $\mathcal{J} = (J, i)$ consisting of an anti-linear functor $J: \mathcal{A} \to \mathcal{A}$ and a natural isomorphism $i: \id_{\mathcal{A}} \to J J$ satisfying
  \begin{equation*}
    i_{J(X)} = J(i_X)
  \end{equation*}
  for all $X \in \mathcal{A}$. If $\mathcal{J} = (J, i)$ and $\mathcal{J}' = (J', i')$ are real structures for $\mathcal{A}$, then a {\em morphism} from $\mathcal{J}$ to $\mathcal{J}'$ is a natural isomorphism $\beta: J \to J'$ such that the following diagram   commutes for all $X \in \mathcal{A}$:
  \begin{equation}
    \label{eq:real-str-2}
    \begin{CD}
      X @>{i_X}>{}> J J(X) \\
      @V{i_X'}V{}V @V{}V{\beta_{J(X)}}V \\
      J' J'(X) @<{}<{J'(\beta_X)}< J' J(X)
    \end{CD}
  \end{equation}
\end{definition}

Real structures for $\mathcal{A}$ form a category. If $\mathcal{J}$ and $\mathcal{J}'$ are isomorphic in this category, then we say that they are {\em equivalent} and write $\mathcal{J} \sim \mathcal{J}'$.

Beggs and Majid \cite{MR2501177} introduced the notion of {\em bar categories}. In most situations, a bar category can be thought as a $\mathbb{C}$-linear monoidal category endowed with a real structure compatible with the monoidal structure. Thus many examples of real structures are found in \cite{MR2501177}.

\subsection{Signature}

Let $\mathcal{A}$ be a locally finite-dimensional $\mathbb{C}$-linear abelian category, and let $\mathcal{J} = (J, i)$ be a real structure for $\mathcal{A}$. Given a symbol $\varepsilon \in \{ +, - \}$, a {\em $\mathcal{J}_{\varepsilon}$-structure} for $X \in \mathcal{A}$ is an isomorphism $j: X \to J(X)$ satisfying
\begin{equation*}
  J(j) \circ j = \varepsilon i_X: X \to J J(X).
\end{equation*}

\begin{lemma}
  \label{lem:real-str-1}
  Let $\mathcal{J} = (J, i)$ be a real structure for $\mathcal{A}$. If $X \in \mathcal{A}$ is a simple object, then one and only one of the following statements holds:
  \begin{enumerate}
  \item[(1)] $X$ is not isomorphic to $J(X)$.
  \item[(2)] $X$ has a $\mathcal{J}_+$-structure.
  \item[(3)] $X$ has a $\mathcal{J}_-$-structure.
  \end{enumerate}
\end{lemma}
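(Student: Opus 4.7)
The plan is to split on whether $X$ is isomorphic to $J(X)$. If $X \not\cong J(X)$, then (1) holds and excludes (2), (3) automatically, since a $\mathcal{J}_\varepsilon$-structure by definition requires an isomorphism $X \to J(X)$. So the real work is to handle the case $X \cong J(X)$ and show that exactly one of (2), (3) holds there. Because $J$ is a functor, it sends isomorphisms to isomorphisms, so $J(X)$ is simple; combined with local finite-dimensionality and Schur's lemma this gives $\dim_{\mathbb{C}} \Hom_{\mathcal{A}}(X, J(X)) = \dim_{\mathbb{C}} \Hom_{\mathcal{A}}(X, JJ(X)) = 1$.

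Next I would fix any isomorphism $j : X \to J(X)$. The composite $J(j) \circ j : X \to JJ(X)$ is nonzero, so Schur's lemma yields a unique $\lambda = \lambda(j) \in \mathbb{C}^\times$ with
\begin{equation*}
  J(j) \circ j = \lambda \cdot i_X.
\end{equation*}
The goal is to prove $\lambda \in \mathbb{R}$, and then to control how $\lambda$ depends on the choice of $j$.

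The reality of $\lambda$ is the main obstacle and is where all three ingredients of a real structure enter. Applying the anti-linear functor $J$ to the displayed equation gives $J(J(j)) \circ J(j) = \overline{\lambda} \cdot J(i_X) = \overline{\lambda} \cdot i_{J(X)}$, using the defining axiom $i_{J(X)} = J(i_X)$. Composing on the right with $j$ produces
\begin{equation*}
  J(J(j)) \circ J(j) \circ j = \overline{\lambda} \cdot i_{J(X)} \circ j.
\end{equation*}
On the other hand, substituting $J(j) \circ j = \lambda i_X$ into the left side gives $\lambda \cdot JJ(j) \circ i_X$, which by naturality of $i$ equals $\lambda \cdot i_{J(X)} \circ j$. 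Since $i_{J(X)} \circ j$ is an isomorphism, comparing the two expressions forces $\overline{\lambda} = \lambda$, so $\lambda \in \mathbb{R}^\times$.

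Finally, because $J$ is anti-linear, replacing $j$ by $\mu j$ for $\mu \in \mathbb{C}^\times$ sends $\lambda$ to $|\mu|^2 \lambda$. Hence the sign of $\lambda$ is an invariant of the isomorphism class of $X$, and the choice $\mu = 1/\sqrt{|\lambda|}$ rescales $j$ to a $\mathcal{J}_\varepsilon$-structure with $\varepsilon = \operatorname{sign}(\lambda)$. The same rescaling calculation, applied to two putative structures $j_+$ and $j_-$ (which must be scalar multiples of each other by Schur), shows that the existence of a $\mathcal{J}_+$-structure and a $\mathcal{J}_-$-structure would require $|\mu|^2 = -1$, which is impossible; this gives the mutual exclusivity of (2) and (3) and completes the trichotomy.
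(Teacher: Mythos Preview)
Your proof is correct and follows essentially the same route as the paper's: both fix an isomorphism $j:X\to J(X)$, use Schur's lemma to write $J(j)\circ j=\lambda\, i_X$, compute $JJ(j)\circ J(j)\circ j$ in two ways (once via naturality of $i$, once via anti-linearity of $J$ together with the axiom $i_{J(X)}=J(i_X)$) to conclude $\lambda\in\mathbb{R}^\times$, rescale by $|\lambda|^{-1/2}$ to produce a $\mathcal{J}_\varepsilon$-structure, and use the rescaling law $\lambda(\mu j)=|\mu|^2\lambda(j)$ for mutual exclusivity. The only cosmetic difference is that you first record $\dim\Hom(X,J(X))=1$ explicitly, whereas the paper invokes Schur's lemma inline.
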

\begin{proof}
  It is trivial that if (1) holds, then both (2) and (3) do not. Now suppose that (1) does not hold. Let $f: X \to J(X)$ be an isomorphism. By Schur's lemma, $J(f) \circ f = \alpha i_X$ for some $\alpha \in \mathbb{C}^\times$. By the definition of a real structure,
  \begin{equation*}
    J J(f) \circ J(f) \circ f
    = J J(f) \circ \alpha i_X
    = \alpha i_{J(X)} \circ f
    = \alpha J(i_X) \circ f.
  \end{equation*}
  On the other hand, since the functor $J$ is anti-linear, we compute
  \begin{equation*}
    J J(f) \circ J(f) \circ f
    = J(J(f) \circ f) \circ f
    = J(\alpha i_X) \circ f
    = \overline{\alpha} J(i_X) \circ f.
  \end{equation*}
  Hence $\alpha = \overline{\alpha}$ follows. Namely, $\alpha \in \mathbb{R}$. Now we set $j = |\alpha|^{-1/2} f$. Then $j$ is a $\mathcal{J}_+$-structure or a $\mathcal{J}_-$-structure for $X$ according to whether $\alpha > 0$ or $\alpha < 0$. Hence, at least either one of (2) or (3) holds.

  To complete the proof, we show that (2) and (3) cannot occur at the same time. Suppose that (2) holds and fix a $\mathcal{J}_+$-structure $j$ for $X$. If $f: X \to J(X)$ is an isomorphism, then, by Schur's lemma, $f = \lambda j$ for some $\lambda \in \mathbb{C}^\times$. Since
  \begin{equation*}
    J(f) \circ f = J(\lambda j) \circ \lambda j = |\lambda|^2 J(j) \circ j = |\lambda|^2 i_X
  \end{equation*}
  and $|\lambda^2| > 0$, $f$ cannot be a $\mathcal{J}_-$-structure. Hence, (3) does not hold. In a similar way, we see that (3) implies the negation of (2).
\end{proof}

\begin{definition}
  Let $X \in \mathcal{A}$ be a simple object. The {\em $\mathcal{J}$-signature} $\sigma_{\mathcal{J}}(X)$ is defined to be $0$, $+1$ or $-1$ according to whether (1), (2) or (3) of Lemma \ref{lem:real-str-1} holds.
\end{definition}

Let $\Irr(\mathcal{A})$ denote the isomorphism classes of simple objects of $\mathcal{A}$. It is easy to see that $\sigma_{\mathcal{J}}(X) = \sigma_{\mathcal{J}}(Y)$ whenever $X \cong Y$. Hence, taking the $\mathcal{J}$-signature can be considered as a map
\begin{equation*}
  \sigma_{\mathcal{J}}: \Irr(\mathcal{A}) \to \mathbb{Z},
  \quad X \mapsto \sigma_{\mathcal{J}}(X).
\end{equation*}

\begin{lemma}
  \label{lem:real-str-2}
  The function $\sigma_{\mathcal{J}}$ depends on the equivalence class of $\mathcal{J}$.
\end{lemma}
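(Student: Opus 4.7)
The plan is to upgrade the statement to the following: if $\beta : \mathcal{J} \to \mathcal{J}'$ is a morphism of real structures (in particular, whenever $\mathcal{J} \sim \mathcal{J}'$), then for every simple $X$ and every sign $\varepsilon \in \{+, -\}$, the assignment $j \mapsto \beta_X \circ j$ sends $\mathcal{J}_\varepsilon$-structures on $X$ to $\mathcal{J}'_\varepsilon$-structures on $X$. Together with the invertibility of $\beta$, this ensures that the trichotomy of Lemma~\ref{lem:real-str-1} transfers from $\mathcal{J}$ to $\mathcal{J}'$, yielding $\sigma_{\mathcal{J}}(X) = \sigma_{\mathcal{J}'}(X)$ for all $X \in \Irr(\mathcal{A})$.

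First I would dispose of the case $\sigma_{\mathcal{J}}(X) = 0$: since $\beta_X : J(X) \to J'(X)$ is an isomorphism in $\mathcal{A}$, we have $X \cong J(X)$ if and only if $X \cong J'(X)$, so condition~(1) of Lemma~\ref{lem:real-str-1} transfers.

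Now suppose $j : X \to J(X)$ is a $\mathcal{J}_\varepsilon$-structure, so $J(j) \circ j = \varepsilon\, i_X$, and set $j' := \beta_X \circ j$. Using functoriality of $J'$ and then the naturality of $\beta$ at $j$ (which reads $J'(j) \circ \beta_X = \beta_{J(X)} \circ J(j)$), one computes
\begin{equation*}
J'(j') \circ j' = J'(\beta_X) \circ J'(j) \circ \beta_X \circ j = J'(\beta_X) \circ \beta_{J(X)} \circ J(j) \circ j = \varepsilon \cdot J'(\beta_X) \circ \beta_{J(X)} \circ i_X.
\end{equation*}
The compatibility diagram~\eqref{eq:real-str-2} states exactly that $J'(\beta_X) \circ \beta_{J(X)} \circ i_X = i'_X$, so $J'(j') \circ j' = \varepsilon\, i'_X$, as required. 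Running the same argument with $\beta^{-1} : \mathcal{J}' \to \mathcal{J}$ yields the reverse implication, establishing the upgraded claim.

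I do not anticipate a genuine obstacle. Two small points to keep in mind are: (i)~the anti-linearity of $J$ and $J'$ plays no role in this argument, because $\varepsilon$ is a real scalar and therefore commutes freely with all morphisms, in contrast to the $\alpha = \overline{\alpha}$ step from the proof of Lemma~\ref{lem:real-str-1}; and (ii)~one must read off the direction of the bottom arrow in~\eqref{eq:real-str-2} correctly in order to extract the identity $J'(\beta_X) \circ \beta_{J(X)} \circ i_X = i'_X$ with the correct orientation.
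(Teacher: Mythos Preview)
Your proof is correct and follows the same approach as the paper's own proof: handle the zero case via the isomorphism $\beta_X$, then transport a $\mathcal{J}_\varepsilon$-structure $j$ to the $\mathcal{J}'_\varepsilon$-structure $j' = \beta_X \circ j$ using naturality of $\beta$ together with the compatibility square~\eqref{eq:real-str-2}. The paper simply asserts that $j'$ is a $\mathcal{J}'_\varepsilon$-structure ``by~\eqref{eq:real-str-2}'' without writing out the computation you supply, so your version is a more detailed rendering of the same argument.
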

\begin{proof}
  Let $\mathcal{J} = (J, i)$ and $\mathcal{J}' = (J', i')$ be real structures for $\mathcal{A}$ and suppose that there exists an isomorphism $\beta: \mathcal{J} \to \mathcal{J}'$ of real structures. Then
  \begin{equation*}
    \sigma_{\mathcal{J}}(X) \ne 0
    \iff X \cong J(X)
    \iff X \cong J'(X)
    \iff \sigma_{\mathcal{J}'}(X) \ne 0
  \end{equation*}
  for all $X \in \Irr(\mathcal{A})$ by the definition of the signature. In particular,
  \begin{equation}
    \label{eq:lem-real-str-2-1}
    \sigma_{\mathcal{J}}(X) = \sigma_{\mathcal{J}'}(X)
  \end{equation}
  holds if $\sigma_{\mathcal{J}}(X) = 0$. Thus we consider the case where $\varepsilon := \sigma_{\mathcal{J}}(X) \ne 0$. Then there exists a $\mathcal{J}_{\varepsilon}$-structure $j: X \to J(X)$ for $X$. By \eqref{eq:real-str-2}, the morphism
  \begin{equation*}
    \begin{CD}
      j': X @>{j}>{}> J(X) @>{\beta_X}>{}> J'(X)
    \end{CD}
  \end{equation*}
  is a $\mathcal{J}'_{\varepsilon}$-structure for $X$. Hence \eqref{eq:lem-real-str-2-1} holds also in this case.
\end{proof}

\subsection{Real forms of an algebra}
\label{subsec:RF-alg}

Let $A$ be an algebra over $\mathbb{C}$. By a {\em real form} of $A$, we mean an $\mathbb{R}$-subalgebra $A_0 \subset A$ such that $A = A_0 \oplus \mathbf{i} A_0$. Suppose that a real form $A_0$ of $A$ is given. For $a \in A$, we define
\begin{equation*}
  \overline{a} = x - \mathbf{i} y
  \quad (x, y \in A_0, a = x + \mathbf{i} y)
\end{equation*}
and call $\overline{a}$ the {\em conjugate} of $a$ with respect to the real form $A_0$. It is easy to see that the map $\overline{\phantom{a}}: a \mapsto \overline{a}$ is an anti-linear operator on $A$ such that
\begin{equation}
  \label{eq:RF-cpx-conj-2}
  \overline{a b} = \overline{a} \cdot \overline{b}, \quad \overline{\overline{a}} = a \quad (a, b \in A).
\end{equation}
Conversely, if an anti-linear operator $\overline{\phantom{a}}: A \to A$ satisfies~\eqref{eq:RF-cpx-conj-2}, then
\begin{equation*}
  A_0 = \{ a \in A \mid \overline{a} = a \}
\end{equation*}
is a real form of $A_0$. Thus giving a real form of $A$ is equivalent to giving an anti-linear operator on $A$ satisfying~\eqref{eq:RF-cpx-conj-2}.

Given a vector space $X$ over $\mathbb{C}$, we denote by $\overline{X}$ its complex conjugate; namely, $\overline{X} = X$ as an abelian group and the action of $\mathbb{C}$ on $\overline{X}$ is determined by
\begin{equation*}
  \overline{c} \cdot \overline{x} = \overline{c x} \quad (c \in \mathbb{C}, x \in X)
\end{equation*}
if we denote by $\overline{x}$ the element $x \in X$ regarded as an element of $\overline{X}$. Now let $A$ be an algebra over $\mathbb{C}$, and let $A_0 \subset A$ be a real form. If $X$ is a left $A$-module, then $\overline{X}$ is also a left $A$-module by the action determined by
\begin{equation*}
  \overline{a} \cdot \overline{x} = \overline{a x} \quad (a \in A, x \in X).
\end{equation*}
Given a morphism $f: X \to Y$ in $\Mod{A}$, we define $\overline{f}: \overline{X} \to \overline{Y}$ in $\Mod{A}$ by
\begin{equation*}
  \overline{f}(\overline{x}) = \overline{f(x)} \quad (x \in X).
\end{equation*}
We call $\overline{X}$ the {\em conjugate} of $X$ with respect to the real form $A_0$. Taking the conjugate of an $A$-module defines an anti-linear functor
\begin{equation}
  \label{eq:RF-conj-rep-3}
  \overline{\phantom{a}}: \Mod{A} \to \Mod{A}, \quad X \mapsto \overline{X}.
\end{equation}
Moreover, there is a natural isomorphism $i: \id \to \overline{\phantom{a}} \circ \overline{\phantom{a}}$ defined by
\begin{equation}
  \label{eq:RF-conj-rep-4}
  i_X: X \to \overline{\overline{X}},
  \quad i_X(x) = \overline{\overline{x}}
  \quad (x \in X \in \Mod{A}).
\end{equation}
The pair $\mathcal{J} = (\overline{\phantom{a}}, i)$ is a real structure for $\Mod{A}$, which will be referred to as the {\em real structure associated with the real form $A_0$}.

We give representation-theoretic interpretations of the $\mathcal{J}$-signature and related notions. Let $V \in \Mod{A}$ and $\varepsilon \in \{ +, - \}$. By the definition of $\overline{V}$, a $\mathcal{J}_{\varepsilon}$-structure for $V$ is nothing but an anti-linear map $j: V \to V$ satisfying
\begin{equation}
  \label{eq:RF-J-eps-str}
  j^2 = \varepsilon \id_V, \quad j(a v) = \overline{a} j(v) \quad (a \in A, v \in V).
\end{equation}
By a {\em real form} of $V$ (with respect to the real form $A_0$), we mean an $A_0$-submodule $V_0 \subset V$ such that $V = V_0 \oplus \mathbf{i} V_0$. The proof of the following lemma is omitted.

\begin{lemma}
  \label{lem:J-plus}
  Given a $\mathcal{J}_+$-structure $j$ for $V$,
  \begin{equation}
    \label{eq:J-plus-1}
    V_0 = \{ v \in V \mid j(v) = v \}
  \end{equation}
  is a real form of $V$. Conversely, given a real form $V_0$ of $V$,
  \begin{equation}
    \label{eq:J-plus-2}
    j: V \to V, \quad j(x + \mathbf{i} y) = x - \mathbf{i} y \quad (x, y \in V_0)
  \end{equation}
  is a $\mathcal{J}_+$-structure for $V$. \eqref{eq:J-plus-1} and \eqref{eq:J-plus-2} establish a bijection between $\mathcal{J}_+$-structures for $V$ and real forms of $V$.
\end{lemma}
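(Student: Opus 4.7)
The plan is to check the three assertions in turn: (a) the set $V_0$ defined in \eqref{eq:J-plus-1} is an $A_0$-submodule satisfying $V = V_0 \oplus \mathbf{i} V_0$; (b) the map $j$ defined in \eqref{eq:J-plus-2} is a well-defined $\mathcal{J}_+$-structure; and (c) these two assignments invert one another. Each reduces to a direct computation using \eqref{eq:RF-J-eps-str} and the elementary algebra of the decomposition $A = A_0 \oplus \mathbf{i} A_0$.

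For (a), since $j$ is additive and $j(av) = \overline{a}\,j(v) = a v$ whenever $a \in A_0$ and $v \in V_0$, the set $V_0$ is closed under addition and under the $A_0$-action. Given $v \in V$, I would set $x := \tfrac{1}{2}(v + j(v))$ and $y := -\tfrac{\mathbf{i}}{2}(v - j(v))$; using the anti-linearity of $j$ together with the relation $j^2 = \id_V$, one checks that both $x$ and $y$ are fixed by $j$ and that $v = x + \mathbf{i} y$. Uniqueness of this decomposition follows from the observation that if $x + \mathbf{i} y = 0$ with $x, y \in V_0$, then applying $j$ gives $x - \mathbf{i} y = 0$, forcing $x = y = 0$.

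For (b), well-definedness of the map in \eqref{eq:J-plus-2} follows from the directness of $V = V_0 \oplus \mathbf{i} V_0$, and the identity $j^2 = \id_V$ is immediate from the formula. Anti-linearity and the compatibility $j(av) = \overline{a}\,j(v)$ are checked by decomposing the scalar $c \in \mathbb{C}$ (respectively, the element $a \in A$) and the vector $v$ into their $V_0$- and $\mathbf{i} V_0$-parts, expanding the products, and invoking the fact that $A_0 \cdot V_0 \subset V_0$.

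Part (c) is then a bookkeeping verification: the fixed-point set of the $j$ reconstructed from $V_0$ via \eqref{eq:J-plus-2} is manifestly $V_0$ itself, while conversely $\tfrac{1}{2}(v + j(v))$ is precisely the projection of $v$ onto the fixed-point set of the original $j$ furnished by the decomposition in step (a). I do not anticipate any serious obstacle; each step reduces to straightforward manipulation of the two identities in \eqref{eq:RF-J-eps-str} together with the interplay between anti-linearity and multiplication by $\mathbf{i}$.
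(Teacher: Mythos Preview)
Your argument is correct and complete; the paper itself omits the proof of this lemma, so there is nothing to compare against. What you have written is precisely the standard routine verification one would expect, and every step checks out.
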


The following characterization should be noted:

\begin{lemma}
  \label{lem:J-plus-real-basis}
  $V \in \fdMod{A}$ has a real form if and only if it admits a $\mathbb{C}$-basis $\mathcal{B}$ such that the matrix representation $\rho: A \to M_n(\mathbb{C})$ with respect to $\mathcal{B}$ has the following property: $\rho(A_0) \subset M_n(\mathbb{R})$.
\end{lemma}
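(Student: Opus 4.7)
The plan is to show that real forms of $V$ and $\mathbb{C}$-bases with the stated matrix property correspond to one another through the construction $V_0 = \mathrm{span}_{\mathbb{R}}(\mathcal{B})$. Both directions reduce to bookkeeping about decompositions of $V$.

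For the ``only if'' direction, I would start with a real form $V_0 \subset V$ and choose any $\mathbb{R}$-basis $\mathcal{B} = \{v_1, \dots, v_n\}$ of $V_0$. The defining decomposition $V = V_0 \oplus \mathbf{i} V_0$ guarantees that $\mathcal{B}$ is also a $\mathbb{C}$-basis of $V$, since every $v \in V$ writes uniquely as $x + \mathbf{i} y$ with $x, y \in V_0$ and each of $x, y$ expands uniquely over $\mathbb{R}$ in $\mathcal{B}$. For $a \in A_0$, the assumption that $V_0$ is an $A_0$-submodule yields $a v_i \in V_0$, so the coefficients of $a v_i$ with respect to $\mathcal{B}$ are real. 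Thus $\rho(a) \in M_n(\mathbb{R})$.

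For the ``if'' direction, given a basis $\mathcal{B} = \{v_1,\dots,v_n\}$ with $\rho(A_0) \subset M_n(\mathbb{R})$, I would define $V_0 = \bigoplus_{i=1}^n \mathbb{R} v_i$. The vector-space decomposition $V = V_0 \oplus \mathbf{i} V_0$ is immediate from the fact that $\mathcal{B}$ is a $\mathbb{C}$-basis. The one point that needs a brief check is that $V_0$ is closed under the $A_0$-action: for $a \in A_0$ and $v = \sum_i c_i v_i$ with $c_i \in \mathbb{R}$, the matrix entries $\rho(a)_{ji}$ are real by hypothesis, so $a v = \sum_{i,j} c_i \rho(a)_{ji} v_j$ lies in $V_0$. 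Hence $V_0$ is an $A_0$-submodule and therefore a real form of $V$.

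There is no substantive obstacle; the statement is an unwinding of definitions once one notices that an $\mathbb{R}$-basis of $V_0$ doubles as a $\mathbb{C}$-basis of $V$, and that $\mathbb{R}$-linearity propagates the real-matrix condition from basis vectors to all of $V_0$. The only mild subtlety is remembering to verify $A_0$-stability of $V_0$ as a whole rather than merely on the generators $v_i$, but this follows at once from $\mathbb{R}$-linearity of the action.
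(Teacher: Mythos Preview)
Your proof is correct and follows exactly the same approach as the paper: pass between a real form $V_0$ and an $\mathbb{R}$-basis of it, which is automatically a $\mathbb{C}$-basis of $V$, and take the $\mathbb{R}$-span in the converse direction. The paper states this in two sentences without spelling out the verifications you give, but the argument is identical.
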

\begin{proof}
  If $V$ has a real form $V_0$, then any $\mathbb{R}$-basis $\mathcal{B}$ of $V_0$ is a $\mathbb{C}$-basis of $V$ having such a property. Conversely, if $V$ admits such a basis $\mathcal{B}$, then the $\mathbb{R}$-subspace of $V$ spanned by $\mathcal{B}$ is a real form of $V$.
\end{proof}

Let $\mathbb{H} = \mathbb{R}1 \oplus \mathbb{R} \mathbf{i} \oplus \mathbb{R} \mathbf{j} \oplus \mathbb{R} \mathbf{k}$ be the quaternions, {\it i.e.}, the algebra over $\mathbb{R}$ generated by $\mathbf{i}$, $\mathbf{j}$ and $\mathbf{k}$ with relations $\mathbf{i}^2 = \mathbf{j}^2 = \mathbf{k}^2 = \mathbf{i j k} = -1$. Set $A_{\mathbb{H}} := \mathbb{H} \otimes_{\mathbb{R}} A_0$ and regard $A$ ($\cong \mathbb{C} \otimes_{\mathbb{R}} A_0$) as an $\mathbb{R}$-subalgebra of $A_{\mathbb{H}}$. Since $\mathbf{j i} = - \mathbf{i j}$, we have $\mathbf{j} \cdot a = \overline{a} \cdot \mathbf{j}$ in $A_{\mathbb{H}}$ for all $a \in A$. This observation yields:

\begin{lemma}
  \label{lem:J-minus}
  Let $V \in \Mod{A}$. An $\mathbb{R}$-linear map $j: V \to V$ is a $\mathcal{J}_-$-structure for $V$ if and only if the following formula defines an action of $A_{\mathbb{H}}$:
  \begin{equation*}
    (a + \mathbf{j} b) \cdot v = a + j(b v) \quad (a, b \in A, v \in V)
  \end{equation*}
  Thus $\mathcal{J}_-$-structures for $V$ and $A_{\mathbb{H}}$-module structures on $V$ extending the $A$-module structure on $V$ are in bijection.
\end{lemma}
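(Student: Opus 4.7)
The plan is to exploit the presentation $A_{\mathbb{H}} = A \oplus \mathbf{j} A$ as a real vector space, together with the fact that, as an $\mathbb{R}$-algebra containing $A$, $A_{\mathbb{H}}$ is generated by $\mathbf{j}$ subject to the two relations $\mathbf{j}^2 = -1$ and $\mathbf{j}\,a = \overline{a}\,\mathbf{j}$ for $a \in A$ (the second being the observation made just before the lemma). Thus extending the $A$-module structure on $V$ to an $A_{\mathbb{H}}$-module structure is equivalent to prescribing an $\mathbb{R}$-linear endomorphism $j$ of $V$ that satisfies these two relations after substitution, and it is precisely these two relations that translate into the two clauses of~\eqref{eq:RF-J-eps-str} with $\varepsilon = -$.

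In the forward direction, given a $\mathcal{J}_-$-structure $j$, I would first note that the formula $(a + \mathbf{j} b)\cdot v = a v + j(b v)$ is manifestly $\mathbb{R}$-bilinear in $(a+\mathbf{j}b,v)$ and reduces to the given $A$-action when $b = 0$. It remains to check compatibility with multiplication in $A_{\mathbb{H}}$. Using $\mathbf{j}a = \overline{a}\mathbf{j}$ and $\mathbf{j}^2 = -1$ one computes
\begin{equation*}
(a+\mathbf{j}b)(a'+\mathbf{j}b') = (aa' - \overline{b}b') + \mathbf{j}(\overline{a}b' + ba'),
\end{equation*}
so that the putative action of the product on $v$ equals $(aa' - \overline{b}b')v + j\bigl((\overline{a}b' + ba')v\bigr)$. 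Expanding the iterated action $(a+\mathbf{j}b)\cdot\bigl((a'+\mathbf{j}b')\cdot v\bigr)$ and using $j(\overline{a}w)= aj(w)$ (which follows from $j(aw) = \overline{a}j(w)$ together with $\overline{\overline{a}} = a$) together with $j^2(w) = -w$, the only non-obvious term $j(b\,j(b'v))$ becomes $\overline{b}\,j^2(b'v) = -\overline{b}b' v$, and the two expressions agree. This is the main computation; I expect the bookkeeping here to be the only real obstacle, and it is purely routine.

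For the converse, assume the formula defines an $A_{\mathbb{H}}$-action. Taking $a = 0$, $b = 1$ gives $\mathbf{j}\cdot v = j(v)$, so $j$ is the action of $\mathbf{j}$. Applying the relation $\mathbf{j}^2 = -1$ to $v$ yields $j^2(v) = -v$; applying $\mathbf{j}\cdot(a\cdot v) = (\overline{a}\mathbf{j})\cdot v$ yields $j(av) = \overline{a}j(v)$. Thus $j$ is $\mathbb{R}$-linear and satisfies both conditions of~\eqref{eq:RF-J-eps-str} with $\varepsilon = -$, i.e.\ is a $\mathcal{J}_-$-structure.

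Finally, the bijection assertion follows by inspection: the $\mathcal{J}_-$-structure extracted from an $A_{\mathbb{H}}$-module structure via $j(v) = \mathbf{j}\cdot v$ is sent back to the original action by the displayed formula, and conversely the $A_{\mathbb{H}}$-action built from $j$ obviously recovers $j$ as the operator $\mathbf{j}\cdot(-)$. Hence the two constructions are mutually inverse.
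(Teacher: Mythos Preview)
Your proof is correct and follows precisely the approach the paper intends: the paper does not give a proof of this lemma, but records just before it the key observation $\mathbf{j}\,a = \overline{a}\,\mathbf{j}$ in $A_{\mathbb{H}}$ and says ``This observation yields'' the lemma. Your argument is the natural detailed verification built on that observation, and all computations (in particular the product formula $(a+\mathbf{j}b)(a'+\mathbf{j}b') = (aa' - \overline{b}b') + \mathbf{j}(\overline{a}b' + ba')$ and the handling of the cross term $j(b\,j(b'v))$) are carried out correctly.
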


Given $V \in \fdMod{A}$, we denote by $\chi_V: A \to \mathbb{C}$ the character of $V$. Note that
\begin{equation*}
  N \cong M \iff \chi_N = \chi_M
\end{equation*}
whenever $N, M \in \fdMod{A}$ are simple $A$-modules.

\begin{lemma}
  \label{lem:J-zero}
  Let $V \in \fdMod{A}$. Then:
  \begin{enumerate}
  \item The character of $\overline{V}$ is given by $\chi_{\overline{V}}(a) = \overline{\chi_V(\overline{a})}$ for $a \in A$.
  \item If $V$ is simple, then $V \cong \overline{V}$ is equivalent to $\chi_{V}(A_0) \subset \mathbb{R}$.
  \end{enumerate}
\end{lemma}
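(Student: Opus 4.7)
For part (1), the plan is a direct matrix computation. Fix a $\mathbb{C}$-basis $\{v_1,\dots,v_n\}$ of $V$, and write $\rho: A \to M_n(\mathbb{C})$ for the matrix representation in this basis, so that $b v_j = \sum_i \rho_{ij}(b) v_i$. Then $\{\overline{v_1},\dots,\overline{v_n}\}$ is a $\mathbb{C}$-basis of $\overline{V}$. Using the definitions of $\overline{V}$ as a vector space (where $\overline{c}\cdot \overline{x} = \overline{cx}$) and as a module (where $\overline{b}\cdot \overline{x} = \overline{bx}$), I would set $b = \overline{a}$ and compute
\begin{equation*}
  a \cdot \overline{v_j} \;=\; \overline{\overline{a}}\cdot \overline{v_j} \;=\; \overline{\overline{a}\, v_j} \;=\; \overline{\sum_i \rho_{ij}(\overline{a}) v_i} \;=\; \sum_i \overline{\rho_{ij}(\overline{a})}\cdot \overline{v_i}.
\end{equation*}
Thus the matrix of $a$ on $\overline{V}$ in the basis $\{\overline{v_i}\}$ is the entrywise complex conjugate of $\rho(\overline{a})$, and taking traces gives $\chi_{\overline{V}}(a)=\overline{\chi_V(\overline{a})}$.

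For part (2), I would use the characterization, noted just before the lemma, that two simple finite-dimensional $A$-modules are isomorphic if and only if their characters agree. Combined with (1), this says
\begin{equation*}
  V \cong \overline{V} \iff \chi_V(a) = \overline{\chi_V(\overline{a})} \text{ for all } a \in A.
\end{equation*}
If $V \cong \overline{V}$, restricting to $a \in A_0$ yields $\overline{a}=a$ and hence $\chi_V(a)=\overline{\chi_V(a)}$, so $\chi_V(A_0)\subset \mathbb{R}$. Conversely, assuming $\chi_V(A_0)\subset\mathbb{R}$, I would use the decomposition $A = A_0 \oplus \mathbf{i} A_0$: write $a = x + \mathbf{i}y$ with $x,y\in A_0$, so that $\overline{a}=x-\mathbf{i}y$, and compute
\begin{equation*}
  \overline{\chi_V(\overline{a})} \;=\; \overline{\chi_V(x)-\mathbf{i}\,\chi_V(y)} \;=\; \chi_V(x)+\mathbf{i}\,\chi_V(y) \;=\; \chi_V(a),
\end{equation*}
using $\mathbb{C}$-linearity of $\chi_V$ and the fact that $\chi_V(x),\chi_V(y)\in\mathbb{R}$. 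Hence $\chi_{\overline{V}}=\chi_V$, giving $V\cong \overline{V}$ by the criterion above.

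There is essentially no obstacle here; the only thing that requires care is keeping the anti-linearity conventions of $\overline{V}$ straight in part (1), since it is tempting to forget the complex conjugate that appears when one rewrites $c\,\overline{v}$ as $\overline{\overline{c}\,v}$. Part (2) is then a formal consequence of (1) and of the real-form decomposition of $A$.
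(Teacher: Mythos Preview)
Your proof is correct and follows essentially the same approach as the paper: part~(1) is the same basis computation, and part~(2) uses the same decomposition $a = x + \mathbf{i}y$ with $x,y\in A_0$ together with the fact that simple modules are determined by their characters. The only cosmetic difference is that the paper handles both implications of (2) simultaneously by writing out $\chi_V(a)$ and $\chi_{\overline{V}}(a)$ side by side, whereas you split them into two explicit directions.
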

\begin{proof}
  (1) Fix a basis $\{ v_i \}_{i = 1}^n$ of $V$. Note that $\{ \overline{v_i} \}_{i = 1}^n$ is a basis of $\overline{V}$. Now let $a \in A$ and suppose that the action of $\overline{a}$ on $V$ is expressed as
  \begin{equation*}
    \overline{a} \cdot v_j = c_{1 j} v_1 + \dotsb + c_{n j} v_n
    \quad (c_{i j} \in \mathbb{C}, i, j = 1, \dotsc, n).
  \end{equation*}
  Then $a \cdot \overline{v_j} = \overline{c_{1 j}} \cdot \overline{v_1} + \dotsb + \overline{c_{n j}} \overline{v_n}$ in $\overline{V}$. Hence,
  \begin{equation*}
    \chi_{\overline{V}}(a)
    = \overline{c_{1 1}} + \dotsb + \overline{c_{n n}}
    = \overline{c_{1 1} + \dotsb + c_{n n}} = \overline{\chi_V(\overline{a})}.
  \end{equation*}
  (2) Let $V \in \fdMod{A}$ be a simple module. If $a = x + \mathbf{i} y$ ($x, y \in A_0$), then, by (1),
  \begin{equation*}
    \chi_V(a) = \chi_V(x) + \mathbf{i} \chi_V(y),
    \quad \chi_{\overline{V}}(a)
    = \overline{\chi_V(x) - \mathbf{i} \chi_V(y)}.
  \end{equation*}
  From this, we see that $\chi_V(A_0) \subset \mathbb{R}$ if and only if $\chi_V = \chi_{\overline{V}}$. The latter statement is equivalent to $V \cong \overline{V}$ since both $V, \overline{V} \in \fdMod{A}$ are simple.
\end{proof}

Now we give characterizations of the $\mathcal{J}$-signature:

\begin{theorem}
  \label{thm:RF-sign-1}
  Let $V \in \fdMod{A}$ be a simple module. Then:
  \begin{enumerate}
  \item $\sigma_{\mathcal{J}}(V) = +1$ if and only if $V$ admits a basis such that the corresponding matrix representation $\rho: A \to M_n(\mathbb{C})$ has the following property:
    \begin{equation*}
      \rho(A_0) \subset M_n(\mathbb{R}).
    \end{equation*}
  \item $\sigma_{\mathcal{J}}(V) = -1$ if and only if $V$ does not admit such a basis but $\chi_V(A_0) \subset \mathbb{R}$.
  \item $\sigma_{\mathcal{J}}(V) = 0$ if and only if $\chi_V(A_0) \not \subset \mathbb{R}$.
  \end{enumerate}
\end{theorem}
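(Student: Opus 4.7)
The proof is essentially a bookkeeping exercise that combines Lemma \ref{lem:real-str-1} with the representation-theoretic translations recorded in Lemmas \ref{lem:J-plus}, \ref{lem:J-plus-real-basis}, and \ref{lem:J-zero}. The plan is to dispatch each of the three cases by rewriting the definition of the signature in familiar module-theoretic language.

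For case (1), I would argue as follows. By the definition of the signature, $\sigma_{\mathcal{J}}(V) = +1$ is equivalent to the existence of a $\mathcal{J}_+$-structure for $V$. By Lemma \ref{lem:J-plus}, this is in turn equivalent to $V$ having a real form $V_0$ with respect to $A_0$. Finally, Lemma \ref{lem:J-plus-real-basis} identifies the latter condition with the existence of a $\mathbb{C}$-basis $\mathcal{B}$ of $V$ such that the associated matrix representation $\rho$ satisfies $\rho(A_0) \subset M_n(\mathbb{R})$. Concatenating these three equivalences proves (1).

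For case (3), note that $\sigma_{\mathcal{J}}(V) = 0$ is, by definition, the statement that $V$ is not isomorphic to $J(V) = \overline{V}$ in $\fdMod{A}$. Since $V$ is simple, Lemma \ref{lem:J-zero}(2) says $V \cong \overline{V}$ if and only if $\chi_V(A_0) \subset \mathbb{R}$. Taking the contrapositive yields (3). Case (2) then follows automatically: by Lemma \ref{lem:real-str-1}, exactly one of $\sigma_{\mathcal{J}}(V) \in \{0, +1, -1\}$ holds, so $\sigma_{\mathcal{J}}(V) = -1$ is equivalent to the simultaneous failure of the conditions characterizing $\sigma_{\mathcal{J}}(V) = 0$ and $\sigma_{\mathcal{J}}(V) = +1$. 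By (1) and (3) this is exactly the assertion that $\chi_V(A_0) \subset \mathbb{R}$ but $V$ admits no basis as in (1).

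I do not expect any real obstacle here, since all the substantive work has been done in the preceding lemmas; the only thing to be careful about is invoking Lemma \ref{lem:real-str-1} to justify that the three cases are mutually exclusive and exhaustive, so that case (2) can indeed be identified as the complement of cases (1) and (3). Everything else is a direct translation.
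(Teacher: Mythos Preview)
Your proposal is correct and follows essentially the same approach as the paper's own proof: the paper derives (1) and (3) directly from Lemmas~\ref{lem:J-plus}, \ref{lem:J-plus-real-basis}, and \ref{lem:J-zero}, and then obtains (2) as the complement of (1) and (3) using the trichotomy $\sigma_{\mathcal{J}}(V) \in \{0,\pm 1\}$. Your write-up is slightly more explicit in citing Lemma~\ref{lem:real-str-1} for the exclusivity, but the logic is identical.
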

\begin{proof}
  (1) and (3) follow immediately from Lemmas~\ref{lem:J-plus}, \ref{lem:J-plus-real-basis} and \ref{lem:J-zero}. Once (1) and (3) are proved, we find that the assertion (2) is equivalent to
  \begin{equation*}
    \sigma_{\mathcal{J}}(V) = -1 \iff \text{$\sigma_{\mathcal{J}}(V) \ne +1$ and $\sigma_{\mathcal{J}}(V) \ne 0$}.
  \end{equation*}
  This is obvious since $\sigma_{\mathcal{J}}(V) \in \{ 0, \pm 1 \}$. Thus (2) is proved.
\end{proof}

\begin{theorem}
  \label{thm:RF-sign-2}
  For a simple module $V \in \fdMod{A}$,
  \begin{equation}
    \label{eq:RF-sign-2-claim}
    \End_{A_0}(V) \cong
    \begin{cases}
      M_2(\mathbb{R}) & \text{if $\sigma_{\mathcal{J}}(V) = + 1$}, \\
      \mathbb{C}      & \text{if $\sigma_{\mathcal{J}}(V) = \ 0$}, \\
      \mathbb{H}      & \text{if $\sigma_{\mathcal{J}}(V) = - 1$}.
    \end{cases}
  \end{equation}
\end{theorem}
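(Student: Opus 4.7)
My plan is to decompose $\End_{A_0}(V)$ according to the splitting of $\mathbb{R}$-linear endomorphisms of $V$ into $\mathbb{C}$-linear and $\mathbb{C}$-antilinear parts, and then read off the algebra structure from distinguished generators. Given $f \in \End_{A_0}(V)$, the standard formulas $f_{\pm}(v) = \tfrac{1}{2}\bigl(f(v) \mp \mathbf{i}\, f(\mathbf{i} v)\bigr)$ produce its $\mathbb{C}$-linear and $\mathbb{C}$-antilinear parts; since $\mathbf{i}$ is central in $A$, both $f_\pm$ remain $A_0$-linear, yielding a decomposition $\End_{A_0}(V) = E_+ \oplus E_-$ of $\mathbb{R}$-vector spaces. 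Using $A = A_0 \oplus \mathbf{i} A_0$, a $\mathbb{C}$-linear $A_0$-linear map is automatically $A$-linear, so $E_+ = \End_A(V) \cong \mathbb{C}$ by Schur's lemma. The assignment $f \mapsto (v \mapsto \overline{f(v)})$ is an $\mathbb{R}$-linear isomorphism $E_- \cong \Hom_A(V, \overline{V})$ (the verification being that $A_0$-linearity together with $\mathbb{C}$-antilinearity takes $a \in A$ to $\bar a$, matching the $A$-action on $\overline V$); by Schur this is $\mathbb{C}$ when $V \cong \overline V$ and $0$ otherwise.

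By Lemma~\ref{lem:real-str-1}, $\sigma_\mathcal{J}(V) = 0$ is equivalent to $V \not\cong \overline V$, in which case $\End_{A_0}(V) = E_+ \cong \mathbb{C}$, giving the middle line of \eqref{eq:RF-sign-2-claim}. In the remaining cases $\sigma_\mathcal{J}(V) = \varepsilon \in \{\pm 1\}$ one has $\dim_\mathbb{R} \End_{A_0}(V) = 4$. I would finish by fixing a $\mathcal{J}_\varepsilon$-structure $j: V \to V$ and showing that $\{1, \mathbf{i}, j, \mathbf{i} j\}$ is a basis: the pair $1, \mathbf{i} \in E_+$ is independent, and the pair $j, \mathbf{i} j \in E_-$ is independent (otherwise $\mathbf{i}$ would be a real scalar), so the dimension count forces these four elements to span $\End_{A_0}(V)$. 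The relations $\mathbf{i}^2 = -\id_V$, $j^2 = \varepsilon\, \id_V$ (from \eqref{eq:RF-J-eps-str}), and $\mathbf{i} j + j \mathbf{i} = 0$ (from the $\mathbb{C}$-antilinearity $j(\mathbf{i} v) = -\mathbf{i} j(v)$) then pin down the algebra structure on these generators.

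For $\varepsilon = -1$ these are exactly Hamilton's relations, giving $\End_{A_0}(V) \cong \mathbb{H}$. For $\varepsilon = +1$ the same presentation describes the split quaternion algebra, which I would identify with $M_2(\mathbb{R})$ via the explicit assignment $\mathbf{i} \mapsto \bigl(\begin{smallmatrix} 0 & -1 \\ 1 & 0 \end{smallmatrix}\bigr)$, $j \mapsto \bigl(\begin{smallmatrix} 1 & 0 \\ 0 & -1 \end{smallmatrix}\bigr)$: the images satisfy the relations and are $\mathbb{R}$-linearly independent, so the induced algebra map is surjective onto $M_2(\mathbb{R})$ and, by dimension, an isomorphism. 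The only delicate step is the identification $E_- \cong \Hom_A(V, \overline V)$, which requires carefully tracking how complex conjugation intertwines the $A$-action via $A = A_0 \oplus \mathbf{i} A_0$; everything else is a routine algebra-presentation check on four generators.
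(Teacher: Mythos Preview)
Your argument is correct and complete; the decomposition $\End_{A_0}(V)=E_+\oplus E_-$ into $\mathbb{C}$-linear and $\mathbb{C}$-antilinear $A_0$-endomorphisms, the identifications $E_+\cong\End_A(V)$ and $E_-\cong\Hom_A(V,\overline{V})$, and the generator--relation analysis with $\{1,\mathbf{i},j,\mathbf{i}j\}$ all go through as you describe.

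The paper takes a genuinely different route. Rather than splitting endomorphisms, it first picks a simple $A_0$-submodule $V_0\subset V$ and observes that $V=V_0+\mathbf{i}V_0$; the dichotomy is then whether $V_0\cap\mathbf{i}V_0$ equals $V_0$ or $\{0\}$. In the first case $V$ is already $A_0$-simple, so $\End_{A_0}(V)$ is a finite-dimensional real division algebra containing a square root of $-1$, hence $\mathbb{C}$ or $\mathbb{H}$ by the Frobenius classification. In the second case $V_0$ is a real form, $\End_{A_0}(V)\cong M_2(D)$ with $D=\End_{A_0}(V_0)$, and a short computation of $\End_A(V)$ inside $M_2(D)$ forces $D\cong\mathbb{R}$. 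The matching with $\sigma_{\mathcal{J}}$ is then done via Lemmas~\ref{lem:J-plus} and~\ref{lem:J-minus}. Your approach is more self-contained: it avoids invoking the classification of real division algebras and makes the role of the $\mathcal{J}_\varepsilon$-structure $j$ in the algebra presentation completely explicit. The paper's approach, on the other hand, yields as a by-product the extra structural information that $V$ is $A_0$-simple exactly when $\sigma_{\mathcal{J}}(V)\in\{0,-1\}$, which your method does not see directly.
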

\begin{proof}
  We first show that one and only one of the following holds:
  \begin{enumerate}
  \item [(E1)] $V$ has a real form and $E := \End_{A_0}(V) \cong M_2(\mathbb{R})$.
  \item [(E2)] $V$ is simple as an $A_0$-module and $E \cong \mathbb{C}$.
  \item [(E3)] $V$ is simple as an $A_0$-module and $E \cong \mathbb{H}$.
  \end{enumerate}
  To see this, let $V_0$ be a simple $A_0$-submodule of $V$. Then, since $V_0 + \mathbf{i} V_0$ is closed under the action of $A$, $V = V_0 + \mathbf{i} V_0$. Since $V_0$ is a simple $A_0$-submodule, and since $\mathbf{i} V_0 \subset V$ is also an $A_0$-submodule, $V_0 \cap \mathbf{i} V_0$ is either one of $V_0$ or $\{ 0 \}$.

  If the case is the former, then $V = V_0$ is simple as an $A_0$-submodule. Hence $E$ is isomorphic to either one of $\mathbb{R}$, $\mathbb{C}$ or $\mathbb{H}$ by Schur's lemma and the classification of finite-dimensional division algebras over $\mathbb{R}$. However, since the map
  \begin{equation*}
    i: V \to V, \quad v \mapsto \mathbf{i} v \quad (v \in V)
  \end{equation*}
  is an element of $E$ such that $i^2 = -1$, $E$ cannot be isomorphic to $\mathbb{R}$. Therefore either one of (E2) or (E3) holds.

  If the case is the latter, then $V = V_0 + \mathbf{i} V_0$ is a direct sum of $V_0$ and $\mathbf{i} V_0$. Hence, in particular, $V_0$ is a real form of $V$. Since $\mathbf{i} V_0 \cong V_0$ as $A_0$-modules,
  \begin{equation*}
    E = \End_{A_0}(V_0 \oplus \mathbf{i} V_0) \cong \End_{A_0}(V_0 \oplus V_0) \cong M_2(D),
  \end{equation*}
  where $D = \End_{A_0}(V_0)$. Since $V_0$ is simple as an $A_0$-module, $D$ is isomorphic to either one of $\mathbb{R}$, $\mathbb{C}$ or $\mathbb{H}$. On the other hand,
  \begin{equation*}
    \End_A(V) = \{ f \in \End_{A_0}(V) \mid i \circ f = f \circ i \}
    \cong \left\{ \begin{pmatrix} a & b \\ -b & a \end{pmatrix} \mid a, b \in D \right\}
  \end{equation*}
  via the above isomorphism. Recalling $\End_A(V) \cong \mathbb{C}$, we conclude that $D$ must be isomorphic to $\mathbb{R}$. Hence (E1) holds.

  Now we prove \eqref{eq:RF-sign-2-claim}. If $\sigma_{\mathcal{J}}(V) = +1$, then, by Lemma~\ref{lem:J-plus}, $V$ is not simple as an $A_0$-module. Hence (E1) is the only possibility. Conversely, if (E1) holds, then $\sigma_{\mathcal{J}}(V) = +1$ again by Lemma~\ref{lem:J-plus}. Summarizing:
  \begin{equation}
    \label{eq:RF-sign-2-E1}
    \sigma_{\mathcal{J}}(V) = +1 \iff \text{(E1) holds}.
  \end{equation}

  If $\sigma_{\mathcal{J}}(V) = -1$, then, by Lemma~\ref{lem:J-minus}, $E$ has $\mathbb{H}$ as a subalgebra. Hence (E3) is the only possibility. Suppose, conversely, that (E3) holds. Fix an isomorphism $\phi: E \to \mathbb{H}$ of $\mathbb{R}$-algebras. We put $u = \phi(i)$ and define $v \in \mathbb{H}$ to be a pure imaginary quaternion such that $\langle u, v \rangle = \langle v, v \rangle = 1$, where $\langle,\rangle$ is given by
  \begin{align*}
    \langle a_1 + a_2 \mathbf{i} + a_3 \mathbf{d} + a_4 \mathbf{k},
    b_1 + b_2 \mathbf{i} + b_3 \mathbf{d} + b_4 \mathbf{k} \rangle
    = a_1 b_1 + a_2 b_2 + a_3 b_3 + a_4 b_4
  \end{align*}
  for $a_i, b_i \in \mathbb{R}$. Then one can verify that $u^2 = v^2 = -1$ and $v u = - u v$. This implies that $j = \phi^{-1}(v)$ is a $\mathcal{J}_-$-structure for $V$. Hence $\sigma_{\mathcal{J}}(V) = -1$. Summarizing:
  \begin{equation}
    \label{eq:RF-sign-2-E3}
    \sigma_{\mathcal{J}}(V) = -1 \iff \text{(E3) holds}.
  \end{equation}

  Finally, we consider the case where $\sigma_{\mathcal{J}}(V) = 0$. Then, by~\eqref{eq:RF-sign-2-E1} and~\eqref{eq:RF-sign-2-E3}, (E2) is the only possibility. Hence, $E \cong \mathbb{C}$. The proof is completed.
\end{proof}

We end this section by introducing the following terminologies:

\begin{definition}
  Let $A$ be a $\mathbb{C}$-algebra with real form $A_0$, and let $\mathcal{J}$ be the real structure for $\fdMod{A}$ associated with $A_0$. Given a simple module $V \in \fdMod{A}$, we say that $V$ is {\em real}, {\em complex}, and {\em quaternionic} with respect to $A_0$ if $\sigma_{\mathcal{J}}(V)$ is equal to $+1$, $0$ and $-1$, respectively.
\end{definition}

\section{Frobenius-Schur theorem for $C^*$-categories}
\label{sec:FS-thm-C-star-cat}

\subsection{$C^*$-categories}

A {\em $*$-category} is a $\mathbb{C}$-linear category $\mathcal{A}$ equipped with an anti-linear contravariant functor $*: \mathcal{A} \to \mathcal{A}$ such that $X^* = X$ for all object $X \in \mathcal{A}$ and $f^{**} = f$ for all morphism $f$ in $\mathcal{A}$. A {\em $C^*$-category} \cite{MR808930} is a $*$-category $\mathcal{A}$ satisfying the following three conditions:
\begin{enumerate}
\item[(1)] $\Hom_{\mathcal{A}}(X, Y)$ is a Banach space for all $X, Y \in \mathcal{A}$ and $\| f g \| \le \| f \| \| g \|$ holds for all composable morphisms $f$ and $g$ in $\mathcal{A}$.
\item[(2)] The $C^*$-identity $\| f^* f \| = \| f \|^2$ holds for all morphisms $f$ in $\mathcal{A}$.
\item[(3)] For any morphism $f$ in $\mathcal{A}$, the morphism $f^* f$ is positive.
\end{enumerate}
Note that, by (1) and (2), $\End_{\mathcal{A}}(X)$ is a $C^*$-algebra for each $X \in \mathcal{A}$. In (3), that $f^* f$ is positive means that it is a positive element of the $C^*$-algebra $\End_{\mathcal{A}}(X)$, where $X$ is the source of the morphism $f$.

We prepare some notations for $C^*$-algebras and $C^*$-categories. Given a Hausdorff space $X$, we denote by $C(X)$ the set of ($\mathbb{C}$-valued) continuous functions on $X$. Note that if $X$ is compact, then $C(X)$ is a $C^*$-algebra with the supremum norm. Now let, in general, $A$ be a unital $C^*$-algebra. If $a \in A$ is a normal element, then there exists a unique unit-preserving $*$-homomorphism $\phi_a: C(\mathrm{sp}(a)) \to A$, where $\mathrm{sp}(a)$ is the spectrum of $a$, mapping the inclusion map $\mathrm{sp}(a) \hookrightarrow \mathbb{C}$ to $a \in A$. For a subset $K \subset \mathbb{C}$ such that $\mathrm{sp}(a) \subset K$, we consider the map
\begin{equation*}
  \begin{CD}
    \tilde{\phi}_a: C(K) @>\text{restriction}>> C(\mathrm{sp}(a)) @>{\phi_a}>> A.
  \end{CD}
\end{equation*}
Given $f \in C(K)$, we write $f(a)$ for the element $\tilde{\phi}_a(f) \in A$ (the {\em continuous functional calculus}). In particular, the following notation will be used: For a positive element $a \in A$ and $\lambda > 0$, we write $a^\lambda$ for $f(a)$ with $f_{\lambda}(t) = t^\lambda$ ($t \ge 0$). If, moreover, $a$ is invertible, then $a^\lambda$ is defined for all $\lambda \in \mathbb{R}$ in a similar way.

Now let $\mathcal{A}$ be a $C^*$-category. A morphism $u$ in $\mathcal{A}$ is said to be {\em unitary} if it is invertible and $u^* = u^{-1}$. Given a morphism $f: X \to Y$ in $\mathcal{A}$, we define its {\em absolute value} by $|f| = (f^* f)^{1/2}$. $|f|$ is a positive element of the $C^*$-algebra $\End_{\mathcal{A}}(X)$. The following lemma will be used extensively:

\begin{lemma}
  \label{lem:C-star-cat-1}
  $f |f|^{-1}$ is unitary and $f |f|^{-1} = |f^*|^{-1} f$.
\end{lemma}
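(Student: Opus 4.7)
The statement implicitly requires $|f|$ and $|f^*|$ to be invertible in $\End_{\mathcal{A}}(X)$ and $\End_{\mathcal{A}}(Y)$ respectively, which is the standard setting (e.g. when $f$ is an isomorphism, both $f^*f$ and $ff^*$ are invertible positive elements). My plan is the usual polar-decomposition argument adapted to a $C^*$-category, reducing both claims to a single intertwining identity $f|f| = |f^*| f$.

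To prove this intertwining, I would start from the tautological relation $f \cdot (f^*f) = (ff^*) \cdot f$ in $\Hom_{\mathcal{A}}(X, Y)$, iterate it to obtain $f \cdot (f^*f)^n = (ff^*)^n \cdot f$ for every $n \ge 0$, and thus $f \cdot p(f^*f) = p(ff^*) \cdot f$ for every polynomial $p \in \mathbb{C}[t]$. By Stone--Weierstrass together with the continuity of the functional calculi $C(K) \to \End_{\mathcal{A}}(X)$ and $C(K) \to \End_{\mathcal{A}}(Y)$ (where $K$ is any compact subset of $[0,\infty)$ containing the spectra of $f^*f$ and $ff^*$), this extends to $f \cdot \varphi(f^*f) = \varphi(ff^*) \cdot f$ for every continuous $\varphi \in C(K)$. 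Specializing to $\varphi(t) = t^{1/2}$ yields $f|f| = |f^*| f$.

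With this identity in hand, multiplying on the right by $|f|^{-1}$ and on the left by $|f^*|^{-1}$ gives the second assertion $f|f|^{-1} = |f^*|^{-1} f$ at once. To prove unitarity of $u := f|f|^{-1}$, I would use the first presentation to compute
\[
u^* u = |f|^{-1} f^* f \, |f|^{-1} = |f|^{-1} |f|^{2} |f|^{-1} = \id_X,
\]
and the second presentation $u = |f^*|^{-1} f$ to compute
\[
u u^* = |f^*|^{-1} f f^* |f^*|^{-1} = |f^*|^{-1} |f^*|^{2} |f^*|^{-1} = \id_Y.
\]

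The main obstacle is the first step: because $f^*f$ and $ff^*$ live in different $C^*$-algebras, the continuous functional calculus has to be transported across objects. The correct justification is that both sides of the intertwining, viewed as functions of $\varphi$, are continuous linear maps $C(K) \to \Hom_{\mathcal{A}}(X,Y)$ that agree on the dense subspace of polynomials; hence they agree on all of $C(K)$. After that, the unitarity computation is purely algebraic and the second equality is a one-line consequence.
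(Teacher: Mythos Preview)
Your argument is correct. The intertwining relation $f\,\varphi(f^*f)=\varphi(ff^*)\,f$ for all continuous $\varphi$ follows exactly as you say, since both sides define continuous linear maps $C(K)\to\Hom_{\mathcal{A}}(X,Y)$ agreeing on the dense subspace of polynomials; specializing to $\varphi(t)=t^{1/2}$ and inverting gives the second assertion, and the unitarity computation is then immediate.

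The paper takes a different, more algebraic route that avoids any approximation. After checking $u^*u=\id_X$ directly (as you do), it observes that $f|f|^{-1}f^*$ is positive and squares to $ff^*=|f^*|^2$, so by uniqueness of the positive square root $f|f|^{-1}f^*=|f^*|$; the identity $u=|f^*|^{-1}f$ then follows by a short algebraic manipulation using invertibility of $f^*$. Your approach is more general in that it establishes the full intertwining $f\,\varphi(f^*f)=\varphi(ff^*)\,f$ for every continuous $\varphi$, essentially the same density argument that the paper uses for the subsequent Lemma~\ref{lem:C-star-cat-2}. The paper's approach, by contrast, keeps the proof of this lemma self-contained and relies only on the uniqueness of positive square roots rather than on Stone--Weierstrass.
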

\if 0
\begin{proof}
  It is obvious that $u := f |f|^{-1}$ is invertible. Since $|f|^{-1}$ is positive,
  \begin{equation*}
    u^* u = (|f|^{-1})^* f^* f |f|^{-1} = |f|^{-1} |f|^{2} |f|^{-1} = \id_X.
  \end{equation*}
  Therefore $u$ is unitary. Now we show $u = |f^*|^{-1} f$. Note that $f |f|^{-1} f^*$ is positive, since $|f|^{-1}$ is. By the uniqueness of the positive square root and
  \begin{equation*}
    (f |f|^{-1} f^*)^2 = f |f|^{-1} f^* f |f|^{-1} f^* = f f^* = f^{**} f^* = |f^*|^2,
  \end{equation*}
  we have $f |f|^{-1} f^* = |f^*|$. Hence,
  \begin{equation*}
    u = |f^*| (f^*)^{-1} = |f^*|^{-1} |f^*|^2 f^{* -1} = |f^*|^{-1} f f^* f^{* -1} = |f^*|^{-1} f. \qedhere
  \end{equation*}
\end{proof}
\fi

Let $\mathcal{C}$ be an arbitrary category, let $F: \mathcal{C} \to \mathcal{A}$ be an arbitrary functor, and let $\xi: F \to F$ be a natural transformation such that $\xi_X: F(X) \to F(X)$ is positive for all $X \in \mathcal{C}$. Given a continuous function $f: \mathbb{R}_{\ge 0} \to \mathbb{C}$, we can define $f(\xi_X)$ for each $X \in \mathcal{C}$. If $f$ is a polynomial function, then the family
\begin{equation*}
  f(\xi) := \{ f(\xi_X): F(X) \to F(X) \}_{X \in \mathcal{C}}
\end{equation*}
is obviously a natural transformation $f(\xi): F \to F$. By the Weierstrass approximation theorem, one can prove:

\begin{lemma}
  \label{lem:C-star-cat-2}
  $f(\xi)$ is a natural transformation for all $f \in C(\mathbb{R}_{\ge 0})$.
\end{lemma}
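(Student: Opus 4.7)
\textbf{Proof plan for Lemma \ref{lem:C-star-cat-2}.} The strategy is to reduce to the polynomial case, where naturality is automatic, and then pass to the limit using the continuity of the continuous functional calculus together with the Weierstrass approximation theorem. I need to check naturality for each pair of objects separately, so I may choose my polynomial approximants depending on the morphism under consideration.

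First I would dispose of the polynomial case. If $p(t) = \sum_{k=0}^N c_k t^k$, then the naturality of $\xi$ applied repeatedly gives $F(\phi) \circ \xi_X^k = \xi_Y^k \circ F(\phi)$ for every morphism $\phi: X \to Y$ in $\mathcal{C}$ and every $k \ge 0$. Taking the $\mathbb{C}$-linear combination yields $F(\phi) \circ p(\xi_X) = p(\xi_Y) \circ F(\phi)$; in particular $p(\xi)$ is a natural transformation $F \to F$ (this is what the paragraph preceding the lemma already notes).

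Next, fix $f \in C(\mathbb{R}_{\ge 0})$ and an arbitrary morphism $\phi: X \to Y$ in $\mathcal{C}$. Since $\xi_X$ and $\xi_Y$ are positive elements of the respective $C^*$-algebras $\End_{\mathcal{A}}(F(X))$ and $\End_{\mathcal{A}}(F(Y))$, their spectra lie in the compact interval $K := [0, r]$ for any $r \ge \max(\|\xi_X\|, \|\xi_Y\|)$. By the Weierstrass approximation theorem, there is a sequence of polynomials $p_n$ such that $p_n \to f$ uniformly on $K$. Since the continuous functional calculus is norm-continuous, this gives $\|p_n(\xi_X) - f(\xi_X)\| \to 0$ in $\End_{\mathcal{A}}(F(X))$, and likewise for $Y$.

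Finally, composition in a $C^*$-category is sub-multiplicative in norm, so composing with $F(\phi)$ on either side is continuous. Using the polynomial case proved above, I pass to the limit:
\begin{equation*}
  F(\phi) \circ f(\xi_X)
  = \lim_{n \to \infty} F(\phi) \circ p_n(\xi_X)
  = \lim_{n \to \infty} p_n(\xi_Y) \circ F(\phi)
  = f(\xi_Y) \circ F(\phi),
\end{equation*}
which is precisely the naturality of $f(\xi)$. There is no serious obstacle: the only point to watch is that the compact set $K$ on which one approximates must be chosen large enough to contain the spectra of both $\xi_X$ and $\xi_Y$, but since naturality is a condition on each morphism separately, the choice of $K$ (and hence of the polynomials $p_n$) is allowed to depend on the pair $(X,Y)$.
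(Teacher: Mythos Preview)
Your proof is correct and follows essentially the same route as the paper: fix a morphism, choose a compact interval containing both spectra, approximate $f$ uniformly by polynomials via Weierstrass, and use the norm-continuity of the functional calculus together with submultiplicativity of composition to pass from the polynomial case to the general one. The only cosmetic difference is that the paper phrases the final step as an explicit $\varepsilon$-estimate rather than a sequential limit.
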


In particular, $\xi^{1/2} := \{ \xi_X^{1/2} \}_{X \in \mathcal{C}}$ is.

\if 0
\begin{proof}
  We first note the following fact: Let, in general, $a$ be a positive element of a $C^*$-algebra. If $K \subset \mathbb{C}$ is a compact set such that $\mathrm{sp}(a) \subset K$, then $C(K)$ is a unital $C^*$-algebra and \eqref{eq:cont-ft-calc-1} is a $*$-homomorphism. Since a $*$-homomorphism is norm-decreasing, for all $f, g \in C(K)$,
  \begin{equation}
    \label{eq:cont-ft-calc-norm}
    \| f(a) - g(a) \| \le \| f - g \|_{C(K)}.
  \end{equation}
  Now we prove the claim. It is sufficient to show that
  \begin{equation}
    \label{eq:C-star-cat-2-1}
    f(\xi_X) \circ F(s) = F(s) \circ f(\xi_Y)
  \end{equation}
  holds for all morphism $s: X \to Y$ in $\mathcal{C}$. Since $\xi_X$ and $\xi_Y$ are positive, we can take a closed interval $I \subset [0, \infty)$ so that $\mathrm{sp}(\xi_X), \mathrm{sp}(\xi_Y) \subset I$. By the Weierstrass approximation theorem, for any $\varepsilon > 0$, there exists a polynomial function $p \in C(I)$ such that $\|f - p \|_{C(I)} < \varepsilon$. Note that~\eqref{eq:C-star-cat-2-1} holds if $f$ is a polynomial function. Now, by using \eqref{eq:cont-ft-calc-norm}, we estimate
  \begin{align*}
    \|f(\xi_X) & \circ F(s) - F(s) \circ f(\xi_Y)\| \\
    & = \|f(\xi_X) \circ F(s) - p(\xi_X) \circ F(s) + F(s) \circ p(\xi_Y) - F(s) \circ f(\xi_Y)\| \\
    & \le \|(f(\xi_X) - p(\xi_X)) \circ F(s) \| + \| F(s) \circ (p(\xi_Y) - f(\xi_Y))\| \\
    & \le \|f(\xi_X) - p(\xi_X)\| \cdot \|F(s)\| + \|F(s)\| \cdot \|p(\xi_Y) - f(\xi_Y)\|
    \le 2 \varepsilon \|F(s)\|.
  \end{align*}
  This implies $\|f(\xi_X) \circ F(s) - F(s) \circ f(\xi_Y)\| = 0$ and hence \eqref{eq:C-star-cat-2-1} follows.
\end{proof}
\fi

\subsection{Dual structures versus real structures}
\label{subsec:real-vs-dual}

We have introduced the notions of dual structures and real structures for a $\mathbb{C}$-linear category. For a $C^*$-category, it would be reasonable to require them to be compatible with the $*$-structure:

\begin{definition}
  \label{def:star-compatible}
  Let $\mathcal{F} = (F, u)$ be a dual structure or a real structure for a $C^*$-category $\mathcal{A}$. We say that $\mathcal{F}$ is {\em $*$-compatible} if $F$ is a $*$-functor ({\it i.e.}, $F(f^*) = F(f)^*$ for all morphism $f$ in $\mathcal{A}$) and $u$ is unitary ({\it i.e.}, $u_X^* = u_X^{-1}$ for all $X \in \mathcal{A}$).
\end{definition}

Given a $*$-compatible real structure $\mathcal{J} = (J, i)$ for $\mathcal{A}$, we put $D = J *$ and $\eta = i$. Since $D D = J * J * = J J * * = J J$, $\eta$ is a natural isomorphism from $\id_{\mathcal{A}}$ to $D D$. Moreover, $D* = * D$, $\eta^* = \eta^{-1}$, and
\begin{equation*}
  D(\eta_X) \circ \eta_{D(X)} = J(i_X^*) \circ i_{J(X)} = J(i_X^{-1}) J(i_X) = \id_{J(X)} = \id_{D(X)}
\end{equation*}
for all $X \in \mathcal{A}$. In other words, the pair $\mathbb{D}(\mathcal{J}) := (D, \eta)$ is a $*$-compatible dual structure for $\mathcal{A}$. Conversely, if $\mathcal{D} = (D, \eta)$ is a $*$-compatible dual structure for $\mathcal{A}$, then one can verify that the pair $\mathbb{J}(\mathcal{D}) = (D *, \eta)$ is a $*$-compatible real structure for $\mathcal{A}$. Since $** = \id_{\mathcal{A}}$, $\mathbb{D}$ and $\mathbb{J}$ are mutually inverse.

The main claim of this subsection is that the bijections $\mathbb{D}$ and $\mathbb{J}$ preserve the equivalence relations. Namely, if we denote by $\mathsf{DS}^*(\mathcal{A})$ ({\em resp}. $\mathsf{RS}^*(\mathcal{A})$) the class of equivalence classes of $*$-compatible dual ({\em resp}. real) structures for $\mathcal{A}$, then:

\begin{theorem}
  \label{thm:real-vs-dual}
  $\mathcal{J} \mapsto \mathbb{D}(\mathcal{J})$ induces a well-defined bijection
  \begin{equation}
    \label{eq:real-vs-dual-1}
    \mathbb{D}: \mathsf{RS}^*(\mathcal{A}) \to \mathsf{DS}^*(\mathcal{A}),
    \quad [\mathcal{J}] \mapsto [\mathbb{D}(\mathcal{J})]
  \end{equation}
  with the well-defined inverse
  \begin{equation}
    \label{eq:real-vs-dual-2}
    \mathbb{J}: \mathsf{DS}^*(\mathcal{A}) \to \mathsf{RS}^*(\mathcal{A}),
    \quad [\mathcal{D}] \mapsto [\mathbb{J}(\mathcal{D})].
  \end{equation}
\end{theorem}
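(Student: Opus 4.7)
The plan is to leverage the fact, established in the paragraphs preceding the theorem, that $\mathbb{D}$ and $\mathbb{J}$ are already mutually inverse bijections between the sets of $*$-compatible real structures and $*$-compatible dual structures. Once I check that each of them respects the equivalence relation, the induced maps on equivalence classes will automatically be mutually inverse bijections. The argument for $\mathbb{J}$ is formally identical to the one for $\mathbb{D}$ (swap the roles of real and dual throughout), so I only treat $\mathbb{D}$.

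My first step is a unitarity observation. For a natural isomorphism $\xi: J \to J'$ all of whose components $\xi_X$ are unitary in $\mathcal{A}$, being a morphism $\mathcal{J} \to \mathcal{J}'$ of real structures is equivalent to being a morphism $\mathbb{D}(\mathcal{J}) \to \mathbb{D}(\mathcal{J}')$ of dual structures. Indeed, the real-structure axiom $J'(\xi_X) \circ \xi_{J(X)} \circ i_X = i'_X$ rearranges to $\xi_{J(X)} \circ i_X = J'(\xi_X^{-1}) \circ i'_X$, while the dual-structure axiom for $\mathbb{D}(\mathcal{J}) = (J*, i)$ and $\mathbb{D}(\mathcal{J}') = (J'*, i')$ reads $\xi_{J(X)} \circ i_X = J'(\xi_X^*) \circ i'_X$; the identity $\xi_X^* = \xi_X^{-1}$ makes the two coincide.

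My second step---the main technical content---is to reduce an arbitrary iso $\beta: \mathcal{J} \to \mathcal{J}'$ of real structures to a unitary one via polar decomposition. Set $|\beta_X| = (\beta_X^* \beta_X)^{1/2} \in \End_{\mathcal{A}}(J(X))$ and define $\xi_X = \beta_X \circ |\beta_X|^{-1}$. Since $\beta^*\beta: J \to J$ is a positive natural transformation, Lemma~\ref{lem:C-star-cat-2} promotes $|\beta|$ and $|\beta|^{-1}$ to natural transformations $J \to J$, so $\xi: J \to J'$ is a natural isomorphism; Lemma~\ref{lem:C-star-cat-1} ensures that each $\xi_X$ is unitary.

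The main obstacle is then to verify that $\xi$ still satisfies the real-structure morphism axiom. My plan for this calculation is: use $\beta_{J(X)} = J'(\beta_X^{-1}) \circ i'_X \circ i_X^{-1}$ (a rewriting of the axiom for $\beta$) to express $\beta_{J(X)}^* \beta_{J(X)}$ as $i_X (i'_X)^{-1} \cdot J'((\beta_X \beta_X^*)^{-1}) \cdot i'_X i_X^{-1}$; take square roots via Lemma~\ref{lem:C-star-cat-2} to obtain an explicit formula for $|\beta_{J(X)}|^{-1}$; then apply the identity $\beta_X |\beta_X|^{-1} \beta_X^{-1} = |\beta_X^*|^{-1}$ (an immediate consequence of Lemma~\ref{lem:C-star-cat-1}) to collapse $J'(\xi_X) \circ \xi_{J(X)} \circ i_X$ down to $i'_X$. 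Once this is verified, Step~1 promotes $\xi$ to an iso $\mathbb{D}(\mathcal{J}) \to \mathbb{D}(\mathcal{J}')$ of dual structures, completing the proof of well-definedness of $\mathbb{D}$; the symmetric argument finishes $\mathbb{J}$.
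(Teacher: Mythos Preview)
Your proposal is correct and follows essentially the same route as the paper: reduce an arbitrary equivalence of $*$-compatible real structures to a unitary one via polar decomposition (the paper isolates this step as Lemma~\ref{lem:unitary-eq-real}), and then observe that a unitary natural isomorphism satisfies the real-structure morphism axiom iff it satisfies the dual-structure morphism axiom. The only notable difference is computational: the paper works with $|\beta_{J(X)}^*|$ rather than $|\beta_{J(X)}|$, obtaining directly $|\beta_{J(X)}^*| = J'(|\beta_X|)^{-1}$ without the conjugation by $i_X (i'_X)^{-1}$, which makes the final verification a one-liner; your version carries the extra unitary conjugation through but cancels it correctly at the end.
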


We provide two lemmas to prove this theorem. We say that real structures $\mathcal{J}$ and $\mathcal{J}'$ for $\mathcal{A}$ are {\em unitary equivalent} if there exists an isomorphism $\beta: \mathcal{J} \to \mathcal{J}'$ of real structures such that $\beta_X$ is unitary for all $X \in \mathcal{A}$.

\begin{lemma}
  \label{lem:unitary-eq-real}
  For two $*$-compatible real structures $\mathcal{J} = (J, i)$ and $\mathcal{J}' = (J', i')$ for $\mathcal{A}$, the following assertions are equivalent:
  \begin{enumerate}
  \item $\mathcal{J}$ and $\mathcal{J}'$ are equivalent.
  \item $\mathcal{J}$ and $\mathcal{J}'$ are unitary equivalent.
  \end{enumerate}
\end{lemma}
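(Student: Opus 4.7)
The implication $(2) \Rightarrow (1)$ is immediate from the definitions, so the content lies in the converse. Given an isomorphism $\beta: \mathcal{J} \to \mathcal{J}'$ of real structures, the plan is to replace it by the unitary part of the polar decomposition $\beta_X = u_X \circ |\beta_X|$ supplied by Lemma~\ref{lem:C-star-cat-1}, and to show that $u := \{u_X\}$ is itself an isomorphism of real structures.

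First I would check that $u$ is a natural transformation from $J$ to $J'$ whose components are unitary. Taking $*$ of the naturality squares for $\beta$ and using that $J, J'$ are $*$-functors shows that $\beta^* = \{\beta_X^*\}$ is a natural transformation $J' \to J$. Hence $\beta^* \beta$ is a positive natural endo-transformation of $J$, and by Lemma~\ref{lem:C-star-cat-2} the same is true of $\alpha := |\beta| = (\beta^* \beta)^{1/2}$; analogously $\alpha' := |\beta^*| = (\beta \beta^*)^{1/2}$ is a positive natural endo-transformation of $J'$. Lemma~\ref{lem:C-star-cat-1} then gives that $u_X = \beta_X \circ \alpha_X^{-1}$ is unitary, and naturality of $u$ follows from that of $\beta$ and $\alpha$.

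The essential step is verifying the diagram~\eqref{eq:real-str-2} for $u$. Set $w_X := i'_X \circ i_X^{-1}$; since $i_X, i'_X$ are unitary, so is $w_X$. The compatibility of $\beta$ with the real structures reads $J'(\beta_X) \circ \beta_{J(X)} = w_X$. Substituting the two polar decompositions $\beta_X = u_X \alpha_X$ and $\beta_{J(X)} = \alpha'_{J(X)} \, u_{J(X)}$ (both supplied by Lemma~\ref{lem:C-star-cat-1}) transforms this into
\begin{equation*}
  J'(u_X) \circ \bigl[ J'(\alpha_X) \circ \alpha'_{J(X)} \bigr] \circ u_{J(X)} = w_X,
\end{equation*}
so the bracketed factor equals $J'(u_X)^* \circ w_X \circ u_{J(X)}^*$, a product of three unitaries and hence itself unitary. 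Now $J'(\alpha_X)$ is positive: an anti-linear $*$-functor maps positive elements to positive elements (from $a = (a^{1/2})^* a^{1/2}$), and by a Weierstrass argument in the spirit of Lemma~\ref{lem:C-star-cat-2} it commutes with the real continuous functional calculus. Since $\alpha'_{J(X)}$ is positive by definition, and a product $AB$ of two positive invertible elements of a $C^*$-algebra that is unitary must equal the identity (from $(AB)(AB)^* = \id$ one gets $A B^2 A = \id$, hence $A^2 = B^{-2}$ and therefore $A = B^{-1}$ by uniqueness of positive square roots), we conclude $J'(\alpha_X) \circ \alpha'_{J(X)} = \id$. Feeding this back into the displayed equation yields $J'(u_X) \circ u_{J(X)} \circ i_X = i'_X$, which is precisely the diagram~\eqref{eq:real-str-2} for $u$.

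The main technical obstacle is the interaction of the anti-linear $*$-functor $J'$ with polar decomposition; once $J'(\alpha_X)$ is known to be positive, the ``positive $\times$ positive $=$ unitary forces identity'' observation packages the rest of the argument cleanly.
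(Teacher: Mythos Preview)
Your proof is correct and follows essentially the same strategy as the paper: replace $\beta$ by the unitary part $u=\beta|\beta|^{-1}$ of its polar decomposition and verify the compatibility diagram. The only difference is in establishing the key identity $J'(|\beta_X|)\cdot|\beta^{*}_{J(X)}|=\id$: the paper computes $\beta_{J(X)}\beta_{J(X)}^{*}=J'(\beta_X^{*}\beta_X)^{-1}$ directly from \eqref{eq:real-str-2} and unitarity of $i,i'$, then takes positive square roots, whereas you reach the same identity via the observation that a unitary product of two positive invertibles must be the identity.
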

\begin{proof}
  The implication $(2) \Rightarrow (1)$ is trivial. To prove $(1) \Rightarrow (2)$, let $\beta: \mathcal{J} \to \mathcal{J}'$ be an isomorphism of real structures. By Lemmas \ref{lem:C-star-cat-1} and \ref{lem:C-star-cat-2},
  \begin{equation*}
    u := \beta \circ |\beta|^{-1} = |\beta^*|^{-1} \circ \beta
  \end{equation*}
  is a unitary natural isomorphism from $J$ to $J'$. We show that $u$ is a morphism of real structures from $\mathcal{J}$ to $\mathcal{J}'$. Let $X \in \mathcal{A}$. By \eqref{eq:real-str-2} and the $*$-compatibility,
  \begin{equation}
    \label{eq:unitary-eq-real-1}
    \beta_{J(X)}^{} \beta_{J(X)}^*
    = J'(\beta_X)^{-1} (J'(\beta_X)^{-1})^{*}
    = J'(\beta_X^* \beta_X^{})^{-1}.
  \end{equation}
  Let, in general, $a: V \to V$ be a morphism in $\mathcal{A}$. If $a \ge 0$, then $a = b^* b$ for some $b \in \End_{\mathcal{A}}(V)$ and thus $J'(a) = J'(b)^* J'(b) \ge 0$. Hence, by the uniqueness of the positive square root, $J'(a^{1/2}) = J'(a)^{1/2}$. Applying this formula to \eqref{eq:unitary-eq-real-1}, we obtain $|\beta_{J(X)}^*| = J'(|\beta_X|)^{-1}$. Now we show that $u: \mathcal{J} \to \mathcal{J}'$ is a morphism of real structures as follows:
  \begin{equation*}
    J'(u_X) u_{J(X)} i_X = J'(\beta_X) J'(|\beta_X|^{-1}) |\beta_{J(X)}^*|^{-1} \beta_{J(X)} i_X = i'_X. \qedhere
  \end{equation*}
\end{proof}

Unitary equivalence of dual structures is defined in the same way as unitary equivalence of real structures. The proof of the following lemma is parallel to that of Lemma~\ref{lem:unitary-eq-real}. 

\begin{lemma}
  \label{lem:unitary-eq-dual}
  For two $*$-compatible dual structures $\mathcal{D} = (D, \eta)$ and $\mathcal{D}' = (D', \eta')$ for $\mathcal{A}$, the following assertions are equivalent:
  \begin{enumerate}
  \item $\mathcal{D}$ and $\mathcal{D}'$ are equivalent.
  \item $\mathcal{D}$ and $\mathcal{D}'$ are unitary equivalent.
  \end{enumerate}
\end{lemma}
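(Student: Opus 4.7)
The plan is to mirror the strategy used for Lemma~\ref{lem:unitary-eq-real}, adapting the few steps where covariance versus contravariance matters. The implication $(2) \Rightarrow (1)$ is immediate from the definitions, since a unitary equivalence is in particular an equivalence. For $(1) \Rightarrow (2)$, given an isomorphism $\xi: \mathcal{D} \to \mathcal{D}'$ of dual structures, I would set
\[
u := \xi \circ |\xi|^{-1} = |\xi^*|^{-1} \circ \xi,
\]
where the second equality is Lemma~\ref{lem:C-star-cat-1}. Because $\xi^*\xi$ is a positive natural endomorphism of $D$, Lemma~\ref{lem:C-star-cat-2} ensures that $|\xi|$ and $|\xi|^{-1}$ are themselves natural, so $u: D \to D'$ is a unitary natural isomorphism. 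What remains is to check that $u$ fits into the coherence square defining a morphism of dual structures.

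The crucial auxiliary identity is
\[
|\xi_{D(X)}^*| = D'(|\xi_X|) \qquad (X \in \mathcal{A}).
\]
To obtain it, I would start from the commutative square $\xi_{D(X)} \circ \eta_X = D'(\xi_X) \circ \eta'_X$ and take adjoints, using the $*$-compatibility hypothesis ($D'(f)^* = D'(f^*)$, $\eta_X^* = \eta_X^{-1}$, $\eta'^*_X = \eta'^{-1}_X$). Multiplying the original identity by the adjointed one in the right order should yield
\[
\xi_{D(X)} \circ \xi_{D(X)}^* = D'(\xi_X) \circ D'(\xi_X^*) = D'(\xi_X^* \circ \xi_X) = D'(|\xi_X|^2),
\]
where the middle equality uses the contravariance of $D'$. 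Since $D'$ is a contravariant $*$-functor it preserves positive elements (any $b^*b$ is sent to $D'(b)D'(b)^*$, which is of the form $cc^*$), so by uniqueness of the positive square root $D'(|\xi_X|^2)^{1/2} = D'(|\xi_X|)$, giving the claimed identity.

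With this identity, the coherence check is immediate:
\[
u_{D(X)} \circ \eta_X = D'(|\xi_X|^{-1}) \circ \xi_{D(X)} \circ \eta_X = D'(|\xi_X|^{-1}) \circ D'(\xi_X) \circ \eta'_X = D'(u_X) \circ \eta'_X.
\]
The main obstacle I anticipate is bookkeeping around the contravariance of $D'$: because $D'$ reverses composition, one must be careful when taking adjoints of the coherence square to land on $D'(|\xi_X|^2)$ rather than $D'(\xi_X) D'(\xi_X)^*$, which is a different (though also positive) element. Once the order of factors is handled correctly, the argument is a direct translation of the proof of Lemma~\ref{lem:unitary-eq-real}, with the anti-linear functor $J$ replaced by the contravariant $*$-functor $D$.
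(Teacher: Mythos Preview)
Your proposal is correct and follows essentially the same route as the paper's proof: define $u = \xi \circ |\xi|^{-1} = |\xi^*|^{-1} \circ \xi$, establish $|\xi_{D(X)}^*| = D'(|\xi_X|)$ by combining the coherence square with its adjoint (using unitarity of $\eta,\eta'$ to cancel them), and then verify the dual-structure coherence for $u$ exactly as you wrote. Your additional remarks on why $D'$ preserves positives and on the bookkeeping with contravariance are accurate and simply make explicit what the paper leaves to the reader.
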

\begin{proof}
  The implication $(2) \Rightarrow (1)$ is trivial. To prove $(1) \Rightarrow (2)$, let $\xi: \mathcal{D} \to \mathcal{D}'$ be an isomorphism. We show that the unitary natural isomorphism
  \begin{equation*}
    u := \xi \circ |\xi|^{-1} = |\xi^*|^{-1} \circ \xi: D \to D'
  \end{equation*}
  is a morphism $\mathcal{D} \to \mathcal{D}'$. Let $X \in \mathcal{A}$. By~\eqref{eq:dual-str-1} and the $*$-compatibility,
  \begin{equation*}
    \xi_{D(X)}^{} \circ \xi_{D(X)}^*
    = D'(\xi_X) \circ D'(\xi_X)^*
    = D'(\xi_X^* \circ \xi_X).
  \end{equation*}
  Taking the positive square root of both sides, we obtain $|\xi_{D(X)}^*| = D'(|\xi_X|)$ ({\it cf}. the proof of Lemma~\ref{lem:unitary-eq-real}). Hence,
  \begin{equation*}
    u_{D(X)} \eta_X = |\xi^*_{D(X)}|^{-1} \xi_{D(X)} \eta_X
    = D'(|\xi_X|)^{-1} D'(\xi_X) \eta_X'
    = D'(u_X) \eta_X'. \qedhere
  \end{equation*}
\end{proof}

\begin{proof}[Proof of Theorem~\ref{thm:real-vs-dual}]
  Let $\mathcal{J} = (J, i)$ and $\mathcal{J}' = (J', i')$ be $*$-compatible real structures for $\mathcal{A}$, and suppose that there exists an isomorphism $\beta: \mathcal{J} \to \mathcal{J}'$ of real structures. By Lemma~\ref{lem:unitary-eq-real}, we may assume that $\beta$ is unitary. For simplicity, we put $(D, \eta) = \mathbb{D}(\mathcal{J})$ and $(D', \eta') = \mathbb{D}(\mathcal{J}')$. Then $\beta$ is a natural isomorphism from $D$ to $D'$. Moreover, by \eqref{eq:real-str-2},
  \begin{equation*}
    D'(\beta_X) \eta'_X
    = J'(\beta_X^*) i'_X
    = J'(\beta_X)^{-1} i'_x
    = \beta_{J(X)} i_X
    = \beta_{D(X)} \eta_X
  \end{equation*}
  for all $X \in \mathcal{A}$. This means that $\beta: \mathbb{D}(\mathcal{J}) \to \mathbb{D}(\mathcal{J}')$ is an isomorphism of dual structures. Hence \eqref{eq:real-vs-dual-1} is well-defined. In a similar way, Lemma~\ref{lem:unitary-eq-dual} shows that \eqref{eq:real-vs-dual-2} is well-defined. It is obvious that \eqref{eq:real-vs-dual-1} and \eqref{eq:real-vs-dual-2} are mutually inverse.
\end{proof}

\subsection{Frobenius-Schur theorem}

Let $\mathcal{A}$ be a locally finite-dimensional abelian $C^*$-category. In \S\ref{sec:dual-structures}, we have introduced the Frobenius-Schur indicator $\nu_{\mathcal{D}}: \Obj(\mathcal{A}) \to \mathbb{Z}$ with respect to a dual structure $\mathcal{D}$ for $\mathcal{A}$. By Lemma~\ref{lem:FS-ind-equiv}, the assignment $\mathcal{D} \mapsto \nu_{\mathcal{D}}$ induces a map
\begin{equation*}
  \nu: \mathsf{DS}^*(\mathcal{A}) \to \mathbb{Z}^{\Obj(\mathcal{A})},
  \quad [\mathcal{D}] \mapsto \nu_{\mathcal{D}}.
\end{equation*}
In \S\ref{sec:real-structures}, we have defined the $\mathcal{J}$-signature $\sigma_{\mathcal{J}}: \Irr(\mathcal{A}) \to \mathbb{Z}$ for each real structure $\mathcal{J}$ for $\mathcal{A}$. By Lemma~\ref{lem:real-str-2}, $\mathcal{J} \mapsto \sigma_{\mathcal{J}}$ induces a map
\begin{equation*}
  \sigma: \mathsf{RS}^*(\mathcal{A}) \to \mathbb{Z}^{\Irr(\mathcal{A})},
  \quad [\mathcal{J}] \mapsto \sigma_{\mathcal{J}}.
\end{equation*}
Now we consider the diagram
\begin{equation*}
  \begin{CD}
    \mathsf{RS}^*(\mathcal{A}) @>{\sigma}>{}> \mathbb{Z}^{\Irr(\mathcal{A})} \\
    @V{\mathbb{D}}V{}V @A{}A\text{restriction}A \\
    \mathsf{DS}^*(\mathcal{A}) @>{}>{\nu}> \mathbb{Z}^{\Obj(\mathcal{A})}, \\
  \end{CD}
\end{equation*}
where $\mathbb{D}: \mathsf{RS}^*(\mathcal{A}) \to \mathsf{DS}^*(\mathcal{A})$ is the bijection of Theorem~\ref{thm:real-vs-dual}. We formulate the {\em Frobenius-Schur theorem for $C^*$-categories} as the commutativity of the above diagram. Namely, there holds:

\begin{theorem}
  \label{thm:FS-thm-C-st}
  Let $\mathcal{J} = (J, i)$ be a $*$-compatible real structure for $\mathcal{A}$. If $\mathcal{D} = (D, \eta)$ is a $*$-compatible dual structure for $\mathcal{A}$ such that $\mathcal{D} \sim \mathbb{D}(\mathcal{J})$, then
  \begin{equation}
    \label{eq:FS-thm-C-st-1}
    \sigma_{\mathcal{J}}(X) = \nu_{\mathcal{D}}(X)
  \end{equation}
  holds for all $X \in \Irr(\mathcal{A})$.
\end{theorem}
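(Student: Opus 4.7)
The plan is to reduce to the canonical case $\mathcal{D} = \mathbb{D}(\mathcal{J})$ and then identify $\sigma_{\mathcal{J}}(X)$ with $\nu_{\mathcal{D}}(X)$ by testing the characterization \eqref{eq:FS-ind-simple-2} against a carefully chosen morphism $X \to D(X)$. Since $\nu_{\mathcal{D}}$ depends only on the equivalence class of $\mathcal{D}$ by Lemma~\ref{lem:FS-ind-equiv}, I may assume $\mathcal{D} = \mathbb{D}(\mathcal{J}) = (J \circ *,\, i)$. Because $*$ is the identity on objects of a $C^*$-category, $D(X) = J(X)$; and for any $f: X \to J(X)$ the $*$-compatibility of $J$ gives $D(f) = J(f^*) = J(f)^*$.

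The vanishing case is immediate: by \eqref{eq:FS-ind-simple-1}, $\nu_{\mathcal{D}}(X) = 0$ iff $X \not\cong D(X) = J(X)$, and by Lemma~\ref{lem:real-str-1} this is exactly $\sigma_{\mathcal{J}}(X) = 0$. So I may assume $\varepsilon := \sigma_{\mathcal{J}}(X) \in \{+1, -1\}$ and fix a $\mathcal{J}_{\varepsilon}$-structure $j: X \to J(X)$, that is, $J(j) \circ j = \varepsilon\, i_X$.

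The key technical step is to show that any such $j$ is automatically unitary. Simplicity of $X$ together with Schur's lemma in the $C^*$-category $\mathcal{A}$ yields $\End_{\mathcal{A}}(X) = \mathbb{C}\,\id_X$ and $\End_{\mathcal{A}}(J(X)) = \mathbb{C}\,\id_{J(X)}$, so $j^* j = t\,\id_X$ and $j j^* = s\,\id_{J(X)}$ for some $s, t > 0$; conjugating the first identity by $j$ shows $s = t$. Using that $J$ is a $*$-functor and anti-linear, $J(j)^* J(j) = J(j^* j) = t\,\id_{J(X)}$. Then, since $i_X$ is unitary, expanding both sides of
\begin{equation*}
  (J(j) \circ j)^* \circ (J(j) \circ j) \;=\; \varepsilon^2\, i_X^* \circ i_X \;=\; \id_X
\end{equation*}
gives $t^2\,\id_X = \id_X$, hence $t = 1$ and $j$ is unitary.

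With $j$ unitary we have $D(j) = J(j^*) = J(j^{-1}) = J(j)^{-1}$. Combining with $\eta_X = i_X$ and rearranging $J(j) \circ j = \varepsilon\, i_X$ as $J(j)^{-1} \circ i_X = \varepsilon\, j$ gives $D(j) \circ \eta_X = \varepsilon\, j$, so \eqref{eq:FS-ind-simple-2} applied to $f = j$ yields $\nu_{\mathcal{D}}(X) = \varepsilon = \sigma_{\mathcal{J}}(X)$, completing the argument. The only delicate point is the automatic unitarity of the $\mathcal{J}_{\varepsilon}$-structure; everything else is a direct unpacking of the definitions of $\mathbb{D}(\mathcal{J})$ and of \eqref{eq:FS-ind-simple-2}.
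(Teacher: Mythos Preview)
Your argument is correct and runs on the same ingredients as the paper's proof: the reduction to $\mathcal{D} = \mathbb{D}(\mathcal{J})$ via Lemma~\ref{lem:FS-ind-equiv}, the vanishing case via \eqref{eq:FS-ind-simple-1}, the relation $J(j)\circ j = \varepsilon\, i_X$, the unitarity of $i_X$, the characterization \eqref{eq:FS-ind-simple-2}, and positivity of $j^*j$. The only difference is the order in which these are combined. You first prove that any $\mathcal{J}_\varepsilon$-structure $j$ on a simple object is automatically unitary (from $(J(j)\circ j)^*(J(j)\circ j) = \id_X$ together with $j^*j = t\,\id_X$, $t>0$), after which $D(j)\circ\eta_X = J(j)^{-1}\circ i_X = \varepsilon\, j$ identifies $\nu_{\mathcal{D}}(X)$ with $\varepsilon$ directly from \eqref{eq:FS-ind-simple-2}. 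The paper instead sets $\varepsilon' := \nu_{\mathcal{D}}(X)$ from the outset, takes the adjoint of $D(j)\circ\eta_X = \varepsilon'\, j$ to obtain $\varepsilon'\, j^* = i_X^{-1}J(j)$, multiplies on the right by $j$ to get $\varepsilon\varepsilon'\, j^*j = \id_X$, and then positivity of $j^*j$ forces $\varepsilon = \varepsilon'$. Your route has the virtue of isolating the unitarity of $j$ as a standalone fact; the paper's route reaches the conclusion in one fewer step but leaves that fact implicit in the equation $\varepsilon\varepsilon'\, j^*j = \id_X$. (Your side remark that $jj^* = s\,\id_{J(X)}$ with $s = t$ is correct but not used afterward.)
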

\begin{proof}
  Let $X \in \Irr(\mathcal{A})$. By Lemma \ref{lem:FS-ind-equiv}, we may assume $\mathcal{D} = \mathbb{D}(\mathcal{J})$. Then
  \begin{equation}
    \label{eq:FS-thm-C-st-2}
    \nu_{\mathcal{D}}(X) \ne 0
    \mathop{\iff} X \cong D(X)
    \mathop{\iff} X \cong J(X) \iff \sigma_{\mathcal{J}}(X) \ne 0.
  \end{equation}
  Here, the first equivalence follows from~\eqref{eq:FS-ind-simple-1}, the second from $D(X) = J(X)$ (the equality of objects of $\mathcal{A}$), and the last from the definition of $\sigma_{\mathcal{J}}$. In particular, \eqref{eq:FS-thm-C-st-1} holds in the case where $\sigma_{\mathcal{J}}(X) = 0$.

  Now we consider the case where $\varepsilon := \sigma_{\mathcal{J}}(X) \ne 0$. Put $\varepsilon' := \nu_{\mathcal{D}}(X)$. By \eqref{eq:FS-thm-C-st-2}, $\varepsilon, \varepsilon' \in \{ \pm 1 \}$. Let $j: X \to J(X)$ be a $\mathcal{J}_{\varepsilon}$-structure for $X$. Since $J(X) = D(X)$, $j$ is a morphism from $X$ to $D(X)$. By \eqref{eq:FS-ind-simple-2} with $f = j$,
  \begin{equation}
    \label{eq:FS-thm-C-st-4}
    \varepsilon' \cdot j^*
    = (D(j) \eta_X)^*
    = \eta_X^* D(j)^*
    = i_X^* J(j^*)^*
    = i_X^{-1} J(j).
  \end{equation}
  Hence we have
  \begin{equation}
    \label{eq:FS-thm-C-st-5}
    \varepsilon \varepsilon' \cdot j^* j
    = \varepsilon \cdot i_X^{-1} J(j) j
    = \varepsilon^2 \cdot i_X^{-1} i_X^{} = \id_X.
  \end{equation}
  Since $j^* j$ and $\id_X$ are positive elements of $\End_{\mathcal{A}}(X)$, $\varepsilon \varepsilon' \ge 0$. On the other hand, we have seen that $\varepsilon, \varepsilon' \in \{ \pm 1 \}$. Therefore $\varepsilon = \varepsilon'$, {\it i.e.}, \eqref{eq:FS-thm-C-st-1} holds.
\end{proof}

\section{Finite-dimensional $C^*$-algebras}
\label{sec:FS-thm-fd-C-star}

\subsection{Conventions}

Let $X$ and $Y$ be Hilbert spaces. We denote by $\mathcal{B}(X, Y)$ the set of all bounded linear operators from $X$ to $Y$. The following notations will be used:
\begin{equation*}
  \mathcal{B}(X) := \mathcal{B}(X, X), \quad X^\vee = \mathcal{B}(X, \mathbb{C}).
\end{equation*}
For $x \in X$, we put $\phi_x = \langle x | - \rangle$, where $\langle|\rangle$ is the inner product on $X$. Our convention is that the inner product on a Hilbert space is anti-linear in the first variable and linear in the second. Thus $\phi_x \in X^\vee$ for all $x \in X$ and the map
\begin{equation}
  \label{eq:Riesz-1}
  \phi_X: X \to X^\vee, \quad x \mapsto \phi_x \quad (x \in X)
\end{equation}
is anti-linear. The Riesz representation theorem  states that $\phi_X$ is bijective. Moreover, $\phi_X$ is unitary if we define an inner product on $X^\vee$ by
\begin{equation}
  \label{eq:ip-dual-Hilb-sp}
  \langle \phi_x | \phi_y \rangle_{X^\vee} = \langle y | x \rangle
  \quad (x, y \in X).
\end{equation}
The complex conjugate $\overline{X}$ is also a Hilbert space with the inner product given by $\langle \overline{x} | \overline{y} \rangle_{\overline{X}} = \overline{\langle x | y \rangle}$ for $x, y \in X$. Let $\mathcal{H}$ be the $C^*$-category of Hilbert spaces. The following category-theoretical interpretation of Riesz's theorem will be important:

\begin{lemma}
  \label{lem:Riesz-1}
  Given $X \in \mathcal{H}$, we denote by $\varphi_X$ the map $\phi_X: X \to X^\vee$ regarded as a linear map $\overline{X} \to X^\vee$. Then $\varphi = \{ \varphi_X \}_{X \in \mathcal{H}}$ is a natural isomorphism
  \begin{equation*}
    \varphi: \overline{\phantom{a}} \circ * \to (-)^\vee,
  \end{equation*}
  where $*: \mathcal{H} \to \mathcal{H}$ is the functor taking the adjoint operator, $\overline{\phantom{a}}: \mathcal{H} \to \mathcal{H}$ is taking the complex conjugate and $(-)^\vee: \mathcal{H} \to \mathcal{H}$ is taking the continuous dual.
\end{lemma}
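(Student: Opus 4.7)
The plan is to prove three things in turn: that $\varphi_X$ is a well-defined morphism in $\mathcal{H}$, that it is bijective, and that the family $\varphi = \{\varphi_X\}$ is natural.

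For the first point, recall that the map $\phi_X$ of \eqref{eq:Riesz-1} is anti-linear when viewed as a map $X \to X^\vee$, because the inner product is anti-linear in the first slot. Regarding its source as $\overline{X}$ twists the scalar action by complex conjugation, so $\varphi_X: \overline{X} \to X^\vee$ becomes $\mathbb{C}$-linear; bounded\-ness is immediate from $\|\phi_x\| = \|x\|$. That $\varphi_X$ is an isometric isomorphism is exactly the Riesz representation theorem together with the definition \eqref{eq:ip-dual-Hilb-sp} of the inner product on $X^\vee$.

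For the second point, I need to check that for every morphism $T: X \to Y$ in $\mathcal{H}$,
\begin{equation*}
  T^\vee \circ \varphi_Y = \varphi_X \circ \overline{T^*},
\end{equation*}
where $\overline{T^*}: \overline{Y} \to \overline{X}$ is the action of $\overline{\phantom{a}} \circ *$ on $T$ and $T^\vee: Y^\vee \to X^\vee$, $T^\vee(\psi) = \psi \circ T$, is the action of $(-)^\vee$. Evaluating both sides at $\overline{y} \in \overline{Y}$, the left-hand side becomes $\phi_y \circ T$, while the right-hand side becomes $\varphi_X(\overline{T^* y}) = \phi_{T^* y}$. For an arbitrary $x \in X$ the defining property of the adjoint yields
\begin{equation*}
  (\phi_y \circ T)(x) = \langle y \mid T x \rangle = \langle T^* y \mid x \rangle = \phi_{T^* y}(x),
\end{equation*}
which is the required equality.

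The only subtlety is keeping track of which maps are $\mathbb{C}$-linear and which are anti-linear, and verifying that the composite functor $\overline{\phantom{a}} \circ *$ really does produce a $\mathbb{C}$-linear contravariant functor on $\mathcal{H}$ (so that $\varphi$ sits between two functors of the same variance). Once this bookkeeping is done, both the well-definedness of $\varphi_X$ and its naturality are short formal manipulations, so I do not expect a genuine obstacle here.
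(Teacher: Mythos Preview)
Your proof is correct and follows essentially the same route as the paper: the paper reduces the claim to the identity $\phi_X \circ f^* = f^\vee \circ \phi_Y$ for $f: X \to Y$ and declares it straightforward, which is exactly the naturality equation you verify explicitly via the defining property of the adjoint. Your version simply supplies the details (linearity of $\varphi_X$ after conjugating the source, bijectivity from Riesz, and the one-line adjoint computation) that the paper omits.
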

\begin{proof}
  We interpret our claim in terms of \eqref{eq:Riesz-1} and then find that the claim is equivalent to that the equation $\phi_X \circ f^* = f^\vee \circ \phi_Y$ holds for all morphism $f: X \to Y$ in $\mathcal{H}$. This can be verified straightforward.
\end{proof}

\subsection{Real forms of a $*$-algebra}
\label{subsec:RF-st-alg}

Let $A$ be a $*$-algebra. By a {\em $*$-representation} of $A$, we mean a Hilbert space $X$ endowed with a $*$-homomorphism $A \to \mathcal{B}(X)$. This is the same thing as a Hilbert space $X$ endowed with a left $A$-module structure such that $x \mapsto a x$ ($x \in X$) is bounded for all $a \in A$ and
\begin{equation*}
  \langle x | a y \rangle = \langle a^* x | y \rangle
  \quad (x, y \in X, a \in A).
\end{equation*}
We denote by $\Rep(A)$ the category whose objects are $*$-representations of $A$ and whose morphisms are bounded $A$-linear maps between them. $\fdRep(A)$ denotes the full subcategory of $\Rep(A)$ consisting of finite-dimensional objects.

\begin{proposition}
  $\Rep(A)$ and $\fdRep(A)$ are $C^*$-categories.
\end{proposition}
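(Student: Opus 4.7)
The plan is to reduce everything to the well-known fact that the category $\mathcal{H}$ of Hilbert spaces and bounded linear maps is a $C^\ast$-category, and then show that $\Rep(A)$ inherits all the required structure as a wide subcategory of $\mathcal{H}$ (same objects, fewer morphisms). The argument for $\fdRep(A)$ is identical (or follows because the full subcategory of a $C^\ast$-category on any set of objects is again a $C^\ast$-category), so I will only sketch it for $\Rep(A)$.

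First I would record that $\Hom_{\Rep(A)}(X,Y)$ is the subspace of $\mathcal{B}(X,Y)$ cut out by the linear equations $f\circ\rho_X(a)=\rho_Y(a)\circ f$ for $a\in A$, where $\rho_X,\rho_Y$ are the two representations. Each such equation defines a norm-closed subspace of $\mathcal{B}(X,Y)$, hence $\Hom_{\Rep(A)}(X,Y)$ is a Banach subspace, and composition is obviously still submultiplicative. Next I would check that the $\ast$-functor of $\mathcal{H}$ restricts: if $f\colon X\to Y$ is bounded and $A$-linear, then for $x\in X$, $y\in Y$, $a\in A$ one has
\begin{equation*}
  \langle f^\ast(ay)\mid x\rangle
  = \langle ay\mid f(x)\rangle
  = \langle y\mid a^\ast f(x)\rangle
  = \langle y\mid f(a^\ast x)\rangle
  = \langle a f^\ast(y)\mid x\rangle,
\end{equation*}
so $f^\ast$ is $A$-linear. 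Thus $\Rep(A)$ is a $\ast$-category and the $C^\ast$-identity $\|f^\ast f\|=\|f\|^2$ is simply inherited from $\mathcal{B}(X,Y)$.

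The one point that is not entirely automatic is the positivity axiom (3): I must show that $f^\ast f$ is a positive element of the $C^\ast$-algebra $\End_{\Rep(A)}(X)$, not merely of $\mathcal{B}(X)$. For this I would observe that $\End_{\Rep(A)}(X)$ is a norm-closed, unital $\ast$-subalgebra of $\mathcal{B}(X)$ by the previous paragraph, hence itself a unital $C^\ast$-algebra, and the inclusion is an isometric unital $\ast$-homomorphism. Positivity in a $C^\ast$-algebra is a spectral condition, and by spectral permanence for unital $C^\ast$-subalgebras the spectrum of an element does not change under such an inclusion; hence $f^\ast f$ is positive in $\End_{\Rep(A)}(X)$ iff it is positive in $\mathcal{B}(X)$, which it obviously is. This step is the main (and only) conceptual obstacle.

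Finally, the case of $\fdRep(A)$ requires no extra work: it is a full subcategory of $\Rep(A)$, so its hom-spaces, $\ast$-structure, norm, and positive cones in each $\End$-algebra coincide with those of $\Rep(A)$, and therefore the three $C^\ast$-axioms transfer verbatim. This completes the proposal.
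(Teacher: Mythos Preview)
Your argument is correct. The paper itself states this proposition without proof, treating it as standard; your verification via the faithful forgetful functor to $\mathcal{H}$, closedness of the hom-spaces, stability under adjoints, and spectral permanence for positivity is exactly the routine check one would expect and is more than the paper provides. One terminological quibble: $\Rep(A)$ is not literally a \emph{wide} subcategory of $\mathcal{H}$ (its objects carry extra structure), but this does not affect the mathematics since you only use that the forgetful functor is faithful and that each $\End_{\Rep(A)}(X)$ sits as a unital $C^\ast$-subalgebra of $\mathcal{B}(X)$.
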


Recall from \S\ref{subsec:RF-alg} that a {\em real form} of $A$ is an $\mathbb{R}$-subalgebra $A_0 \subset A$ such that $A = A_0 \oplus \mathbf{i} A_0$. Given a real form $A_0$ of $A$, we define $S: A \to A$ by
\begin{equation}
  \label{eq:RF-st-alg-1}
  S(a) = (\overline{a})^* \quad (a \in A),
\end{equation}
where $\overline{a}$ is the conjugate of $a \in A$ with respect to $A_0$. Since $\overline{\phantom{a}}$ and $*$ are anti-linear maps, $S$ is linear. Moreover, by~\eqref{eq:RF-cpx-conj-2}, we obtain:
\begin{equation}
  \label{eq:RF-st-alg-2}
  S(a b) = S(b) S(a), \quad S(S(a)^*)^* = a \quad (a, b \in A).
\end{equation}
Following, we refer the map $S$ as the {\em anti-algebra map associated with the real form $A_0$}. Conversely, if such a linear map $S$ is given, then
\begin{equation}
  \label{eq:RF-st-alg-3}
  A_0 = \{ a \in A \mid \overline{a} = a \}, \text{\quad where $\overline{a} = S(a)^*$},
\end{equation}
is a real form of $A$, which will be referred to as the {\em real form associated with $S$}. It is easy to see that \eqref{eq:RF-st-alg-1} and~\eqref{eq:RF-st-alg-3} establish a bijection between real forms of $A$ and linear maps $S$ satisfying~\eqref{eq:RF-st-alg-2}.

Note that we do not require a real form $A_0$ to be closed under the $*$-operation of $A$; if $S$ is the anti-algebra map associated with a real form $A_0$, then:
\begin{equation}
  \label{eq:RF-st-alg-4}
  \text{$a^* \in A_0$ for all $a \in A_0$}
  \iff S \circ * = * \circ S
  \iff S^2 = \id_A.
\end{equation}

We say that $x \in A$ is positive\footnote{Since we do not assume $A$ to be a $C^*$-algebra, we should clarify the meaning of the positivity of an element of $A$.} if $x = a^* a$ for some $a \in A$. Now we suppose that there exists a positive element $g \in A$ such that the pair $(S, g)$ is a dual structure for $A$ in the sense of \S\ref{subsec:DS-alg}. For example, we can choose $g$ to be $1$ if one of the equivalent conditions of \eqref{eq:RF-st-alg-4} is satisfied.

Let $X \in \Rep(A)$. Then its continuous dual $X^\vee$ has a left $A$-module structure given by the same formula as \eqref{eq:dual-module-1}. With respect to this action,
\begin{equation}
  \label{eq:Riesz-A-linear}
  \phi_{\overline{a} \cdot x} = a \cdot \phi_x \quad (a \in A, x \in X).
\end{equation}
In general, $X^\vee$ is {\em not} a $*$-representation of $A$ with respect to the standard inner product given by \eqref{eq:ip-dual-Hilb-sp}. Following, we define $D(X)$ to be the left $A$-module $X^\vee$ with the inner product given by
\begin{equation*}
  \langle \phi_x | \phi_y \rangle_{D(X)} = \langle y | g x \rangle
  \quad (x, y \in X).
\end{equation*}
The map $\phi_X: X \to D(X)$ is not unitary in general but is still bounded. By~\eqref{eq:Riesz-A-linear}, one can check that $D(X)$ is a $*$-representation of $A$. The assignment $X \mapsto D(X)$ defines a contravariant endofunctor on $\Rep(A)$. Now we define
\begin{equation*}
  \eta_X: X \to D D(X), \quad \langle \eta_X(x), \lambda \rangle = \langle \lambda, g x \rangle
  \quad (x \in X, \lambda \in X^\vee)
\end{equation*}

\begin{lemma}
  \label{lem:RF-st-alg-dual-str}
  $\mathcal{D} = (D, \eta)$ is a $*$-compatible dual structure for $\Rep(A)$.
\end{lemma}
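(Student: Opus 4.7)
Plan: The claim has four constituent parts to verify: that $D$ is a contravariant $\mathbb{C}$-linear endofunctor on $\Rep(A)$; that $\eta$ is a natural isomorphism $\id \to D D$ satisfying $D(\eta_X) \eta_{D(X)} = \id_{D(X)}$; that $D$ is a $*$-functor; and that each $\eta_X$ is unitary. Because $(S, g)$ is already an algebraic dual structure for $A$ in the sense of \S\ref{subsec:DS-alg}, the functoriality of $D$ on underlying $A$-modules, the naturality and bijectivity of $\eta$ as $A$-linear maps, and the triangle identity are all inherited for free. The remaining work concerns the Hilbert-space and $*$-structure data.

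Step 1 (showing $D(X) \in \Rep(A)$). Positive-definiteness of the inner product on $D(X)$ is immediate from $g$ being positive and invertible: with $h = g^{1/2}$ one has $\langle x \mid g x \rangle_X = \|h x\|_X^2 > 0$ for $x \neq 0$. Boundedness of the $A$-action and completeness of $D(X)$ follow because the norms $\|\cdot\|_{D(X)}$ and $\|\cdot\|_{X^\vee}$ are equivalent via the sandwich $\|g^{-1}\|^{-1} \|\phi_x\|_{X^\vee}^2 \le \|\phi_x\|_{D(X)}^2 \le \|g\|\, \|\phi_x\|_{X^\vee}^2$. The main algebraic obstacle is to check that the $A$-action is a $*$-representation; using \eqref{eq:Riesz-A-linear}, this reduces to the identity
\begin{equation*}
g \, S(a)^* = S(a^*) \, g \qquad (a \in A).
\end{equation*}
I would derive this by first extracting $S^{-1}(a) = g^{-1} S(a) g$ from $S^2(a) = g a g^{-1}$ and $S(g) = g^{-1}$, then substituting $a \to a^*$ in $S(S(a)^*)^* = a$ to obtain $S(a^*) = (S^{-1}(a))^*$, and finally combining these using $g^* = g$ (which holds since $g$ is positive).

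Step 2 ($*$-compatibility of $D$ and $\eta$). For the $*$-functor property, unwind $D(f)(\phi_y) = \phi_{f^*(y)}$ and use the $A$-linearity of $f$ (so $g f(x) = f(g x)$) together with $g^* = g$ to verify $\langle D(f) \phi_y \mid \phi_x \rangle_{D(X)} = \langle \phi_y \mid D(f^*) \phi_x \rangle_{D(Y)}$, whence $D(f)^* = D(f^*)$. For unitarity of $\eta_X$, I would identify $\eta_X(x) \in DD(X)$ with $\phi_{\phi_x}$ under the Riesz map applied to $D(X)$; since $S(g)^* = (g^{-1})^* = g^{-1}$, \eqref{eq:Riesz-A-linear} yields $g \cdot \phi_x = \phi_{g^{-1} x}$ in $D(X)$, and then the direct computation
\begin{equation*}
\langle \eta_X(x) \mid \eta_X(y) \rangle_{DD(X)} = \langle \phi_y \mid g \cdot \phi_x \rangle_{D(X)} = \langle g^{-1} x \mid g y \rangle_X = \langle x \mid y \rangle_X
\end{equation*}
completes the verification.
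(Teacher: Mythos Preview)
Your proposal is correct and follows essentially the same route as the paper: both defer the underlying dual-structure axioms to \S\ref{subsec:DS-alg}, verify $D(f^*) = D(f)^*$ by a direct inner-product computation using $f(gx) = g f(x)$, and check unitarity of $\eta_X$ via the Riesz identification $\eta_X(x) = \phi_{\phi_x}$ together with $\overline{g} = g^{-1}$. You supply more detail than the paper does---in particular the identity $g\,S(a)^* = S(a^*)\,g$ and the norm-equivalence argument for completeness of $D(X)$, which the paper leaves implicit in the phrase ``one can check that $D(X)$ is a $*$-representation''---but the architecture of the argument is the same.
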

\begin{proof}
  It is obvious that $\mathcal{D}$ is a dual structure for $\Rep(A)$ ({\it cf}. \S\ref{subsec:DS-alg}). We show that it is  $*$-compatible. If $f: X \to Y$ is a morphism in $\Rep(A)$, then
  \begin{gather*}
    \langle f^{\vee\ast} (\phi_x) | \phi_y \rangle_{D(Y)}
    = \langle \phi_x | f^\vee (\phi_y) \rangle_{D(Y)}
    = \langle \phi_x | \phi_{f^{\ast}(y)} \rangle
    = \langle f^{\ast}(y) | g x \rangle \\
    = \langle y | f(g x) \rangle
    = \langle y | g f^{\ast\ast}(x) \rangle
    = \langle \phi_y | \phi_{f^{\ast\ast}(x)} \rangle
    = \langle \phi_y | f^{*\vee}(\phi_x) \rangle
  \end{gather*}
  for all $x \in X$ and $y \in Y$. This means that $f^{\vee \ast} = f^{\ast \vee}$. To show that $\eta$ is unitary, we note that $\eta$ is expressed as
  \begin{equation}
    \label{eq:Riesz-bidual}
    \eta_X = \phi_{D(X)} \circ \phi_X.
  \end{equation}
  By~\eqref{eq:Riesz-A-linear} and~\eqref{eq:Riesz-bidual}, we can show that the bijection $\eta_X: X \to D D(X)$ preserves the inner product as follows: For $x, x' \in X$,
  \begin{equation*}
    \langle \eta_X(x) | \eta_X(x') \rangle_{D D(X)}
    = \langle \phi_x | g \phi_{x'} \rangle_{D(X)}
    = \langle \overline{g}x' | g x \rangle
    = \langle x' | (\overline{g})^* g x \rangle
    = \langle x' | x \rangle. \qedhere
  \end{equation*}
\end{proof}

In \S\ref{subsec:RF-alg}, we have introduced the notion of the conjugate $A$-module $\overline{X}$ with respect to the real form $A_0$. Given $X \in \Rep(A)$, we define $J(X)$ to be the left $A$-module $\overline{X}$ with the inner product given by $\langle \overline{x} | \overline{y} \rangle_{J(X)} = \langle y | g x \rangle$ ($x, y \in X$).

Equation \eqref{eq:Riesz-A-linear} means that the map $\varphi_X$ of Lemma~\ref{lem:Riesz-1} is an isomorphism $\varphi_X: J(X) \to D(X)$ of left $A$-modules. Recall that $D(X)$ is a $*$-representation. Since $\varphi_X$ preserves the inner product, $J(X)$ is also a $*$-representation of $A$. Now we define $i: \id \to J J$ by the same formula as~\eqref{eq:RF-conj-rep-4}. Then:

\begin{lemma}
  $\mathcal{J} = (J, i)$ is a $*$-compatible real structure for $\Rep(A)$.
\end{lemma}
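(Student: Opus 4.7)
The strategy is to exploit the Riesz-type map $\varphi$ of Lemma~\ref{lem:Riesz-1}. By~\eqref{eq:Riesz-A-linear}, $\varphi_X: \overline{X} \to X^\vee$ is $A$-linear, and the inner product on $J(X)$ is chosen precisely so that $\varphi_X: J(X) \to D(X)$ is isometric. Hence each $\varphi_X$ is a unitary $A$-linear bijection, so $J(X)$ inherits from $D(X)$ the property of being a $*$-representation, and $\varphi$ assembles into a natural isomorphism $J \to D \circ *$ of anti-linear endofunctors of $\Rep(A)$ (naturality $\varphi_Y \circ J(f) = D(f^*) \circ \varphi_X$ follows from $\phi_{f(x)} = \phi_x \circ f^*$). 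Since $\mathcal{D}$ is $*$-compatible by Lemma~\ref{lem:RF-st-alg-dual-str}, the pair $\mathbb{J}(\mathcal{D}) = (D \circ *, \eta)$ is a $*$-compatible real structure by the construction of \S\ref{subsec:real-vs-dual}; I would then transport its axioms to $\mathcal{J}$ along $\varphi$.

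\textbf{Verifications.} The key identity to check is $D(\varphi_X) \circ \eta_X = \varphi_{J(X)} \circ i_X$: applied to $x \in X$, both sides are the linear functional on $J(X)$ sending $\overline{y}$ to $\langle y \mid g x \rangle$, as one sees from \eqref{eq:Riesz-bidual} and the formula $i_X(x) = \overline{\overline{x}}$. This identity says that $\varphi$ is a morphism of real-structure data, and combined with $D(\eta_X) \circ \eta_{D(X)} = \id_{D(X)}$ it forces $J(i_X) = i_{J(X)}$ as required. For the $*$-functoriality $J(f^*) = J(f)^*$, a short computation in the modified inner product on $J(X)$, directly analogous to the verification that $f^{\vee \ast} = f^{\ast \vee}$ in Lemma~\ref{lem:RF-st-alg-dual-str}, suffices. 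Finally, unitarity of $i_X$ reduces via the definition of the inner product on $JJ(X)$ to the cancellation $(\overline{g})^* g = 1$, which holds because $g$ is positive and $\overline{g} = S(g)^* = (g^{-1})^* = g^{-1}$, the same cancellation used for unitarity of $\eta$ in Lemma~\ref{lem:RF-st-alg-dual-str}.

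\textbf{Main obstacle.} The only real difficulty is bookkeeping: tracking how the four (anti-/contra-variant) operations $*$, $\overline{\phantom{a}}$, $S$, and $D$ interact on $A$ and on $\Rep(A)$. A direct proof that $J(X)$ is a $*$-representation, bypassing $\varphi$, would for instance require the identity $S(a) g = g \, \overline{a^*}$, which follows from $S^2(b) = g b g^{-1}$ applied to $b = \overline{a^*}$ together with $S(\overline{a^*}) = a$; routing everything through $\varphi$ sidesteps such calculations. No conceptual difficulty is anticipated, since the statement is ultimately a shadow of the bijection $\mathbb{D} \leftrightarrow \mathbb{J}$ of Theorem~\ref{thm:real-vs-dual}.
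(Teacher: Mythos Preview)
Your argument is correct. The paper, by contrast, simply says the proof is parallel to Lemma~\ref{lem:RF-st-alg-dual-str} and omits it: one checks directly that $J$ is a $*$-functor, that $i$ is unitary via $(\overline{g})^* g = 1$, and that $J(i_X)=i_{J(X)}$, all by short computations with the definitions. Your route differs in that you invoke the Riesz isomorphism $\varphi$ and the already-established $*$-compatibility of $\mathcal{D}$ to transport the real-structure axiom from $\mathbb{J}(\mathcal{D})$ to $\mathcal{J}$; this is valid (the transport of $J'(i'_X)=i'_{J'(X)}$ along a natural iso $\beta:J\to J'$ satisfying the morphism condition really does force $J(i_X)=i_{J(X)}$, using naturality of $\beta$ and of $i'$), but it is overkill precisely here, since $J(i_X)(\overline{x})=\overline{\overline{\overline{x}}}=i_{J(X)}(\overline{x})$ is immediate from the bar definitions. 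What your approach buys is that you have essentially proved the next lemma (Lemma~\ref{lem:RF-real-dual-equiv}) simultaneously: the key identity $D(\varphi_X)\circ\eta_X=\varphi_{J(X)}\circ i_X$ you verify is exactly \eqref{eq:RF-dual-real-equiv-1}, so your argument packages Lemmas~5.4 and~5.5 together rather than treating them separately as the paper does. One small remark: your deduction that $\varphi$ is a morphism of real structures from this identity tacitly uses the unitarity of $\varphi_X$ to pass between $D(\varphi_X)$ and $(D\circ *)(\varphi_X)^{-1}$, so strictly speaking that unitarity (which you establish) should be recorded before the transport step rather than in parallel with it.
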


We omit the proof since it can be proved in a similar way as Lemma~\ref{lem:RF-st-alg-dual-str}.

In \S\ref{subsec:real-vs-dual}, we have seen that the pair $\mathbb{D}(\mathcal{J}) = (J \circ *, i)$ is a $*$-compatible dual structure for $\Rep(A)$. Now we claim:

\begin{lemma}
  \label{lem:RF-real-dual-equiv}
  The natural isomorphism $\varphi$ of Lemma~\ref{lem:Riesz-1} induces a unitary equivalence of dual structures $\varphi: \mathbb{D}(\mathcal{J}) \to \mathcal{D}$.
\end{lemma}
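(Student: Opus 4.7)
The plan is to verify three items: (i) each $\varphi_X\colon J(X)\to D(X)$ is a bounded, unitary, $A$-linear isomorphism, hence a morphism in $\Rep(A)$; (ii) the family $\varphi=\{\varphi_X\}$ is a natural transformation between the contravariant $\mathbb{C}$-linear endofunctors $J\!\circ\!*$ and $D$ on $\Rep(A)$; and (iii) the coherence square
\begin{equation*}
  \begin{CD}
    X @>{i_X}>> J J(X) \\
    @V{\eta_X}V{}V @V{}V{\varphi_{J(X)}}V \\
    D D(X) @>>{D(\varphi_X)}> D J(X)
  \end{CD}
\end{equation*}
commutes for every $X\in\Rep(A)$. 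Together, these say exactly that $\varphi$ is a unitary equivalence of dual structures from $\mathbb{D}(\mathcal{J})=(J*,i)$ to $\mathcal{D}=(D,\eta)$.

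For (i), $A$-linearity is precisely~\eqref{eq:Riesz-A-linear} rewritten in terms of $\varphi_X$, while unitarity is transparent from the construction: the inner products on $J(X)$ and on $D(X)$ are obtained from the standard ones on $\overline{X}$ and on $X^\vee$ by twisting one slot by the same positive element $g$, so the standard Riesz isomorphism $\phi_X$ remains unitary between these twisted inner products. Item (ii) is immediate from Lemma~\ref{lem:Riesz-1}: the functors $J\!\circ\!*$ and $D$ on $\Rep(A)$ act on morphisms by exactly the same underlying formulas as $\overline{\phantom{a}}\!\circ\!*$ and $(-)^\vee$ do on $\mathcal{H}$ (only the target inner products and $A$-actions are twisted), so the naturality established there transports directly.

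Item (iii) is the only real computation. Using $\eta_X=\phi_{D(X)}\circ\phi_X$ from~\eqref{eq:Riesz-bidual} together with the fact that $D(\varphi_X)(\Phi)=\Phi\circ\varphi_X$, evaluating the left-hand side at $x\in X$ and pairing with $\overline{y}\in JJ(X)$ gives
\begin{equation*}
  D(\varphi_X)(\eta_X(x))(\overline{y})
  = \eta_X(x)(\varphi_X(\overline{y}))
  = \langle \phi_x \,|\, \phi_y \rangle_{D(X)}
  = \langle y \,|\, g x \rangle.
\end{equation*}
On the other hand, using $i_X(x)=\overline{\overline{x}}$ and $\varphi_{J(X)}(\overline{\overline{x}})=\phi_{\overline{x}}\in D(J(X))$, the right-hand side is
\begin{equation*}
  \varphi_{J(X)}(i_X(x))(\overline{y})
  = \phi_{\overline{x}}(\overline{y})
  = \langle \overline{x} \,|\, \overline{y} \rangle_{J(X)}
  = \langle y \,|\, g x \rangle.
\end{equation*}
Both expressions agree, so the square commutes. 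The only real hazard is bookkeeping: keeping track of which copy of the underlying set is regarded as $\overline{X}$, $X^\vee$, $J(X)$, or $D(X)$, and remembering that the twists by $g$ affect the inner products and $A$-actions but never alter the bare linear maps involved.
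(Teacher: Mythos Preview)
Your proof is correct and follows essentially the same approach as the paper: both establish unitarity and $A$-linearity of $\varphi_X$ (already noted before the lemma), dismiss naturality as immediate from Lemma~\ref{lem:Riesz-1}, and then verify the coherence square $\varphi_{J(X)}\circ i_X = D(\varphi_X)\circ \eta_X$ by pairing both sides with an arbitrary $\overline{y}$ and computing that each equals $\langle y\,|\,gx\rangle$. The only cosmetic difference is that you route the computation of $D(\varphi_X)(\eta_X(x))(\overline{y})$ through the factorization $\eta_X=\phi_{D(X)}\circ\phi_X$ from~\eqref{eq:Riesz-bidual}, whereas the paper uses the defining formula for $\eta_X$ directly.
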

\begin{proof}
  We have already seen that $\varphi_X: J(X) \to D(X)$ is a unitary isomorphism of $A$-modules for all $X \in \Rep(A)$. The naturality is trivial. To complete the proof, we need to show that $\varphi: \mathbb{D}(\mathcal{J}) \to \mathcal{D}$ is a morphism of dual structures, {\it i.e.},
  \begin{equation}
    \label{eq:RF-dual-real-equiv-1}
    \varphi_{J(X)} \circ i_X = D(\varphi_X) \circ \eta_X
    : X \to D J(X)
  \end{equation}
  holds for all $X \in \Rep(A)$. For $x, y \in X$,
  \begin{gather*}
    \langle \varphi_{J(X)} i_X(x), \overline{y} \rangle
    = \langle \varphi_{J(X)}(\overline{\overline{x}}), \overline{y} \rangle
    = \langle \overline{x} | \overline{y} \rangle_{J(X)}
    = \langle y | g x \rangle. \\
    \langle D(\varphi_X) \eta_X(x), \overline{y} \rangle
    = \langle \eta_X(x), \varphi_X(y) \rangle
    = \langle \varphi_X(y), g x \rangle
    = \langle y | g x \rangle.
  \end{gather*}
  Hence~\eqref{eq:RF-dual-real-equiv-1} is verified.
\end{proof}

By Theorem~\ref{thm:FS-thm-C-st} and Lemma~\ref{lem:RF-real-dual-equiv}, we obtain the following result: If $V$ is a finite-\hspace{0pt}dimensional irreducible $*$-representation of $A$, then
\begin{equation}
  \label{eq:FS-thm-C-st-0}
  \nu_{\mathcal{D}}(V) = \sigma_{\mathcal{J}}(V).
\end{equation}
Theorem \ref{thm:FS-thm-C-st} also implies that the $\mathcal{J}$-signature is equal to the Frobenius-Schur indicator with respect to the dual structure $\mathbb{D}(\mathcal{J})$. Unlike $\mathbb{D}(\mathcal{J})$, the dual structure $\mathcal{D}$ does not have an anti-linear part and is easy to deal with.

\begin{remark}
  The following direct proof of \eqref{eq:FS-thm-C-st-0} can be obtained by interpreting the proof of Theorem~\ref{thm:FS-thm-C-st} in our context: First observe that
  \begin{equation*}
    \sigma_{\mathcal{J}}(V) \ne 0 \iff V \cong J(V) \iff V \cong D(V) \iff \nu_{\mathcal{D}}(V) \ne 0
  \end{equation*}
  ({\it cf}. \eqref{eq:FS-thm-C-st-2} in the proof of Theorem~\ref{thm:FS-thm-C-st}). Hence~\eqref{eq:FS-thm-C-st-0} is proved in the case where $\varepsilon := \sigma_{\mathcal{J}}(V) = 0$. Now consider the case where $\varepsilon \ne 0$. Then there exists an anti-linear map $j: V \to V$ satisfying~\eqref{eq:RF-J-eps-str}. By using $j$, we define
  \begin{equation*}
    \beta(v, w) = \langle j(v) | w \rangle \quad (v, w \in V).
  \end{equation*}
  $\beta$ is a non-degenerate bilinear form on $V$ such that $\beta(a v, w) = \beta(v, S(a) w)$ for all $v, w \in V$. Hence, by \cite[Theorem 3.4]{KenichiShimizu:2012-11}, $\beta(w, g v) = \varepsilon' \cdot \beta(v, w)$ for all $v, w \in V$, where $\varepsilon' := \nu_{\mathcal{D}}(V)$. Now we compute:
  \begin{equation*}
    \langle w | g v \rangle
    = \varepsilon \cdot \langle j^2(w) | g v \rangle
    = \varepsilon \cdot \beta(j(w), g v)
    = \varepsilon \varepsilon' \cdot \beta(v, j(w))
    = \varepsilon \varepsilon' \cdot \langle j(v) | j(w) \rangle
  \end{equation*}
  for $v, w \in V$ ({\it cf}. \eqref{eq:FS-thm-C-st-4} and \eqref{eq:FS-thm-C-st-5}). Recall that $g = a^* a$ for some $a \in A$. For any non-zero element $v \in V$, we have:
  \begin{equation*}
    \varepsilon \varepsilon'
    = \varepsilon \varepsilon' \cdot \| j(v) \| \cdot \| j(v) \|^{-1}
    = \langle v | g v \rangle \cdot \| j(v) \|^{-1}
    = \| a v \| \cdot \| j(v) \|^{-1} \ge 0.
  \end{equation*}
  Since $\varepsilon, \varepsilon' \in \{ \pm 1 \}$, $\varepsilon = \varepsilon'$ follows. Thus~\eqref{eq:FS-thm-C-st-0} is proved.
\end{remark}

\begin{remark}[Lifting problem]
  \label{rem:lifting-problem}
  The positive element $g \in A$ plays a key role to define $\mathcal{J}$ and $\mathcal{D}$. Such an element does not always seem to exists. If existence of $g$ is not guaranteed, then we consider $X \mapsto \overline{X}$ as a functor
  \begin{equation*}
    J_0: \Rep(A) \to \Mod{A}, \quad X \mapsto \overline{X}
  \end{equation*}
  since we do not know how to make $\overline{X}$ into a $*$-representation of $A$. By the same reason, we consider $X \mapsto X^\vee$ as a contravariant functor
  \begin{equation*}
    D_0: \Rep(A) \to \Mod{A}, \quad X \mapsto X^\vee.
  \end{equation*}
  Let, in general, $F: \Rep(A) \to \Mod{A}$ be a (contravariant) functor. By a {\em lift} of $F$ on a full subcategory $\mathcal{C} \subset \Rep(A)$, we mean a (contravariant) endo-$*$-functor $\tilde{F}$ on $\mathcal{C}$ such that $U \circ \tilde{F} = F|_{\mathcal{C}}$, where $U: \mathcal{C} \to \Mod{A}$ is the functor forgetting the inner product. Under some technical assumptions, we can extend a lift of $D_0$ on $\mathcal{C}$ to a $*$-compatible dual structure for $\mathcal{C}$ (Proposition~\ref{prop:D0-lift}). In view of this fact, it would be important to study when a lift of $D_0$ exists. We discuss this problem in Appendix~A.
\end{remark}

\subsection{Frobenius-Schur theorem for finite-dimensional $C^*$-algebras}

We apply our results to finite-dimensional $C^*$-algebras. The following proposition is due to B\"ohm, Nill and Szlach\'anyi \cite{MR1726707}.

\begin{proposition}
  \label{prop:BNS}
  Let $A$ be a finite-dimensional $C^*$-algebra. For each linear map $S: A \to A$ satisfying~\eqref{eq:RF-st-alg-2}, there uniquely exists an invertible positive element $g \in A$ satisfying the following two conditions:
  \begin{enumerate}
  \item[(1)] $S^2(a) = g a g^{-1}$ for all $a \in A$.
  \item[(2)] $\chi_V(g) = \chi_V(g^{-1}) > 0$ for all simple left $A$-module $V$.
  \end{enumerate}
  Moreover, the element $g$ fulfills:
  \begin{enumerate}
  \item[(3)] $S(g) = g^{-1}$.
  \end{enumerate}
\end{proposition}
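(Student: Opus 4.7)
The plan is to prove existence and uniqueness block-by-block using the Wedderburn decomposition $A \cong \bigoplus_{k=1}^N M_{n_k}(\mathbb{C})$ together with the Skolem--Noether theorem. First, observe that the hypothesis $S(S(a)^*)^* = a$ rewrites as $(JS)^2 = \id_A$, where $J(a) = a^*$; in particular $S$ is bijective with $S^{-1} = JSJ$. As a bijective anti-algebra map, $S$ permutes the minimal central idempotents $\{e_1, \dotsc, e_N\}$ of $A$ via some permutation $\sigma$. Since each $e_k$ is self-adjoint, applying $S^{-1} = JSJ$ to $e_k$ yields $\sigma = \sigma^{-1}$. Consequently $S^2$ restricts to an inner automorphism of each simple block $A_k := e_k A \cong M_{n_k}(\mathbb{C})$; write $S^2|_{A_k}(a) = g_k a g_k^{-1}$ with $g_k \in A_k$ invertible and unique up to a nonzero scalar.

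The next step is to pin down $g_k$ concretely. Fix a $*$-isomorphism $A_k \cong M_{n_k}(\mathbb{C})$ for each $k$; any anti-algebra isomorphism $A_k \to A_{\sigma(k)}$ (which forces $n_k = n_{\sigma(k)}$) has the form $a \mapsto v a^T v^{-1}$ for some $v \in GL_{n_k}(\mathbb{C})$. In the fixed-point case $\sigma(k) = k$, writing $S|_{A_k}(a) = v a^T v^{-1}$, the condition $(JS)^2 = \id$ translates to $v \bar{v} \in \mathbb{C}^\times$; after rescaling $v$ by a unimodular scalar we may assume $v \bar v = 1$, equivalently $v^* = v^{-T}$. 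The choice
\[
  g_k := v v^{-T} = v v^*
\]
is then positive, implements $S^2|_{A_k}$, and automatically satisfies $\Trace(g_k) = \Trace(v v^*) = \Trace(v^{-*} v^{-1}) = \Trace(g_k^{-1})$, using $\bar v = v^{-1}$. In a 2-cycle $\{k, l\}$, writing $S|_{A_k}(a) = v a^T v^{-1}$ and $S|_{A_l}(b) = w b^T w^{-1}$, the condition $(JS)^2 = \id$ reduces to the single relation $v = (w^{-T})^*$, after which $g_k := w w^*$ and $g_l := (w^{-T})^* (w^{-T})$ are positive implementers.

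Finally I would verify the normalization, uniqueness, and (3). In the 2-cycle case there remains a freedom $w \mapsto \lambda w$ ($\lambda \in \mathbb{C}^\times$) that leaves $S$ unchanged but rescales $g_k$ by $|\lambda|^2$ and $g_l$ by $|\lambda|^{-2}$. A short computation shows that, before rescaling, $\Trace(g_l) = \Trace(g_k^{-1})$ and $\Trace(g_l^{-1}) = \Trace(g_k)$, so the two equalities $\chi_{V_k}(g_k) = \chi_{V_k}(g_k^{-1})$ and $\chi_{V_l}(g_l) = \chi_{V_l}(g_l^{-1})$ collapse to the single equation $|\lambda|^4 = \Trace(g_k^{-1})/\Trace(g_k)$, which has a unique positive solution. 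For uniqueness, any other positive $g'$ satisfying (1) differs from $g$ by a central factor, so $g'|_{A_k} = \lambda_k g_k$ with $\lambda_k > 0$, and condition (2) forces $\lambda_k = 1$. Property (3), $S(g) = g^{-1}$, is then a direct computation; in the fixed-point case,
\[
  S(g_k) = v (v v^*)^T v^{-1} = v \bar v \cdot v^T v^{-1} = v^T v^{-1} = g_k^{-1},
\]
and the 2-cycle case is analogous. The principal technical obstacle is precisely this 2-cycle bookkeeping: one must verify that a single scalar rescaling simultaneously enforces the trace identity on both linked blocks, a coincidence that ultimately rests on the symmetry $\Trace(A A^*) = \Trace(A^T A^{T*})$ under transposition.
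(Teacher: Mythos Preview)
Your matrix-theoretic route via Wedderburn and Skolem--Noether is quite different from the paper's proof, which is categorical: for each simple $*$-representation $X_i$ the paper builds a positive operator $g_i$ by comparing the inner product on $X_i$ with one on $X_i^\vee$ that makes the dual a $*$-representation, normalizes so that $\Trace(g_i) = \Trace(g_i^{-1})$, and then reads off $g \in A$ from the resulting natural transformation $U \to S^{2\natural}U$ (Appendix~A, Lemmas~\ref{lem:D0-lift-g} and~\ref{lem:D0-lift-g-dual}). Your approach is more elementary and self-contained; the paper's buys a uniform treatment that transfers verbatim to the coalgebraic setting of Proposition~\ref{prop:BNS-cog}.

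There is, however, a real slip in your fixed-point analysis. From $(JS)^2 = \id$ one indeed gets $v \bar v = c \in \mathbb{C}^\times$, and in fact $c \in \mathbb{R}^\times$ (conjugating $v \bar v = c$ gives $\bar v v = \bar c$, while $v \bar v = c$ also forces $\bar v v = c$). But rescaling $v \mapsto \lambda v$ replaces $c$ by $|\lambda|^2 c$: a unimodular rescaling does nothing, and no rescaling can change the sign. You can only normalize to $c = \pm 1$, and both signs genuinely occur --- this sign is essentially the Frobenius--Schur indicator of that block. The repair is painless: rescale by a positive real so that $v \bar v = \epsilon \in \{\pm 1\}$ and keep $g_k = v v^* = \epsilon\, v v^{-T}$. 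Positivity and implementation of $S^2$ are then clear; the trace identity survives because $v^{-1} = \epsilon \bar v$ gives $g_k^{-1} = v^T \bar v$, whence $\Trace(g_k^{-1}) = \sum_{i,j}|v_{ij}|^2 = \Trace(v v^*) = \Trace(g_k)$ regardless of $\epsilon$; and your verification of $S(g_k) = g_k^{-1}$ still goes through, both sides equalling $\epsilon\, v^T v^{-1}$. The 2-cycle analysis and the uniqueness argument are fine as written.
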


Namely, an element $g$ as in \S\ref{subsec:RF-st-alg} always exists and, moreover, can be chosen in a canonical way. In Appendix A, we will give another proof of Proposition~\ref{prop:BNS} from the viewpoint of the lifting problem mentioned in Remark~\ref{rem:lifting-problem}.

Now we give the {\em Frobenius-Schur theorem for finite-dimensional $C^*$-algebras} in a form as general as possible:

\begin{theorem}
  \label{thm:FS-thm-fd-C-st-alg}
  Let $A$ be a finite-dimensional $C^*$-algebra, let $A_0$ be a real form of $A$, and let $E = \sum_i E_i' \otimes E_i''$ be a separability idempotent of $A$. Given a simple left $A$-module $V$, we set
  \begin{equation*}
    \nu(V) = \sum_i \chi_V(S(E_i') g E_i''),
  \end{equation*}
  where $S: A \to A$ is the anti-algebra map associated with $A_0$ and $g \in A$ is the element given by Proposition~\ref{prop:BNS}. Then
  \begin{equation}
    \label{eq:FS-thm-fd-C-st-alg-claim}
    \nu(V) =
    \begin{cases}
      + 1 & \text{if $V$ is real}, \\
      \ 0 & \text{if $V$ is complex}, \\
      - 1 & \text{if $V$ is quaternionic with respect to the real form $A_0$}.
    \end{cases}
  \end{equation}
\end{theorem}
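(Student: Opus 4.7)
The plan is to chain together Theorem~\ref{thm:FS-ind-sep-alg}, Lemma~\ref{lem:RF-real-dual-equiv}, and Theorem~\ref{thm:FS-thm-C-st}: the first identifies the scalar $\nu(V)$ with the Frobenius-Schur indicator of a dual structure coming from $(S,g)$; the second identifies that dual structure with the one arising from the real structure of $A_0$; the third converts the Frobenius-Schur indicator into the $\mathcal{J}$-signature, which by definition detects reality.

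First I would note that Proposition~\ref{prop:BNS} supplies a positive invertible $g \in A$ making $(S,g)$ a dual structure for $A$ in the sense of Section~\ref{subsec:DS-alg}. Let $\mathcal{D}_0$ be the associated dual structure on $\fdMod{A}$. Then Theorem~\ref{thm:FS-ind-sep-alg} gives
\begin{equation*}
  \nu(V) = \sum_i \chi_V(S(E_i') g E_i'') = \nu_{\mathcal{D}_0}(V).
\end{equation*}
Since $A$ is a finite-dimensional $C^*$-algebra, the forgetful functor $U: \fdRep(A) \to \fdMod{A}$ is an equivalence of $\mathbb{C}$-linear abelian categories (every finite-dimensional $A$-module carries a compatible inner product). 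The $*$-compatible dual structure $\mathcal{D}$ on $\fdRep(A)$ constructed in Section~\ref{subsec:RF-st-alg} from $(S,g)$ has the same underlying $A$-module-level data as $\mathcal{D}_0$, so under $U$ the two indicators agree: $\nu_{\mathcal{D}}(V) = \nu_{\mathcal{D}_0}(V) = \nu(V)$. Now let $\mathcal{J}$ be the $*$-compatible real structure on $\fdRep(A)$ arising from the real form $A_0$. Lemma~\ref{lem:RF-real-dual-equiv} gives $\mathcal{D} \sim \mathbb{D}(\mathcal{J})$, so Theorem~\ref{thm:FS-thm-C-st} yields $\nu_{\mathcal{D}}(V) = \sigma_{\mathcal{J}}(V)$ for every simple $V$. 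By the definition at the end of Section~\ref{sec:real-structures}, $\sigma_{\mathcal{J}}(V)$ is $+1$, $0$, or $-1$ according to whether $V$ is real, complex, or quaternionic with respect to $A_0$, which is exactly~\eqref{eq:FS-thm-fd-C-st-alg-claim}.

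The only step requiring some care is verifying that $\mathcal{D}_0$ and $\mathcal{D}$ produce the same Frobenius-Schur indicator on simple modules through the equivalence $U$: both functors send $X$ to $X^*$ with the $S$-twisted $A$-action, and in both cases the natural isomorphism $\eta$ is given by multiplication by $g$, so this is really a matter of observing that the indicator is determined by the $\Hom$-space data forgotten by $U$. The conceptual core of the theorem is already contained in Theorem~\ref{thm:FS-thm-C-st}, and the role of Proposition~\ref{prop:BNS} here is precisely to guarantee the positive $g$ needed for the $*$-compatible dual structure of Section~\ref{subsec:RF-st-alg} to make sense.
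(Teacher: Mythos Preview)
Your proposal is correct and follows the same route as the paper: identify $\nu(V)$ with $\nu_{\mathcal{D}}(V)$ via Theorem~\ref{thm:FS-ind-sep-alg}, then use Lemma~\ref{lem:RF-real-dual-equiv} together with Theorem~\ref{thm:FS-thm-C-st} (packaged in the paper as~\eqref{eq:FS-thm-C-st-0}) to obtain $\nu_{\mathcal{D}}(V)=\sigma_{\mathcal{J}}(V)$. The only cosmetic difference is that the paper handles the passage between $\fdMod{A}$ and $\fdRep(A)$ by simply declaring ``we may assume $V$ is a $*$-representation'' (via the classification~\eqref{eq:classification-fd-C-st-alg}, equivalently Remark~\ref{rem:fd-C-st-characterization}), whereas you phrase the same reduction through the forgetful equivalence $U$; the underlying content is identical.
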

\begin{proof}
  We may assume that $V$ is a $*$-representation since, by the classification theorem of finite-dimensional $C^*$-algebras, there exists an isomorphism
  \begin{equation}
    \label{eq:classification-fd-C-st-alg}
    A \cong M_{n_1}(\mathbb{C}) \oplus \dotsb \oplus M_{n_r}(\mathbb{C})
  \end{equation}
  of $C^*$-algebras for some $n_1, \dotsc, n_r$.

  Define $\mathcal{D}$ and $\mathcal{J}$ as in \S\ref{subsec:RF-st-alg} by using the element $g$. By Theorem~\ref{thm:FS-ind-sep-alg}, the left-hand side of~\eqref{eq:FS-thm-fd-C-st-alg-claim} is equal to the Frobenius-Schur indicator $\nu_{\mathcal{D}}(V)$. On the other hand, by definition, the right-hand side of \eqref{eq:FS-thm-fd-C-st-alg-claim} is equal to the $\mathcal{J}$-signature $\sigma_{\mathcal{J}}(V)$. Thus~\eqref{eq:FS-thm-fd-C-st-alg-claim} follows from~\eqref{eq:FS-thm-C-st-0}.
\end{proof}

\begin{remark}
  \label{rem:fd-C-st-characterization}
  Since we will deal with examples which are not {\it a priori} a $C^*$-algebra, we mention the following characterization: For a finite-dimensional $*$-algebra $A$, the following assertions are equivalent:
  \begin{enumerate}
  \item $A$ admits a norm making it into a $C^*$-algebra.
  \item $A$ has a separability idempotent of the form $E = \sum_{i = 1}^m a_i^* \otimes a_i$, $a_i \in A$.
  \item Any finite-dimensional left $A$-module admits an inner product making it into a $*$-representation.
  \item $A$ has a faithful $*$-representation.
  \end{enumerate}
  To show $(1) \Rightarrow (2)$, let $e_{i j}^{(k)} \in A$ be the element corresponding to the $(i,j)$-th matrix unit of the $k$-th component $M_{n_k}(\mathbb{C})$ via the isomorphism~\eqref{eq:classification-fd-C-st-alg}. Then it is straightforward to show that the element
  \begin{equation*}
    E = \sum_{k = 1}^r \sum_{i, j = 1}^{n_k} \frac{1}{\sqrt{n_k}} e_{i j}^{(k)} \otimes \frac{1}{\sqrt{n_k}} e_{j i}^{(k)}
    \in A \otimes_{\mathbb{C}} A
  \end{equation*}
  is a separability idempotent of the desired form.

  The implication (2) $\Rightarrow$ (3) is shown as follows: Let $E = \sum a_i^* \otimes a_i$ be a separability idempotent of such form. Given $V \in \fdMod{A}$, we fix an inner product $\langle | \rangle_0$ on $V$ and then define
  \begin{equation*}
    \langle v | w \rangle = \sum_i \langle a_i v | a_i w \rangle_0 \quad (v, w \in V).
  \end{equation*}
  This makes $V$ into a $*$-representation. Thus (3) follows. To show (3) $\Rightarrow$ (4), make the left regular representation of $A$ into a $*$-representation. To show (4) $\Rightarrow$ (1), fix a faithful $*$-representation $X$ of $A$ and then realize $A$ as a $*$-subalgebra of the $C^*$-algebra $\mathcal{B}(X)$. The operator norm makes $A$ into a $C^*$-algebra.
\end{remark}

\begin{remark}[{\it cf.} Doi {\cite[\S1]{MR2674691}}]
  \label{rem:fd-C-st-sep-idemp}
  In applications, it is often a problem how to find a separability idempotent. Let $A$ be a finite-dimensional $C^*$-algebra. If we are given an inner product $\langle | \rangle$ on $A$ making it into a $*$-representation of $A$, then we can express a separability idempotent of $A$ as follows: First, fix an orthonormal basis $\{ e_i \}_{i=1}^m$ of $A$ with respect to $\langle|\rangle$. Consider the linear map
  \begin{equation*}
    \Theta: A \otimes A \to \End_{\mathbb{C}}(A),
    \quad \Theta(a \otimes b)(x) = \langle b^* | x \rangle \cdot a
    \quad (a, b, x \in A).
  \end{equation*}
  Since $\langle|\rangle$ is non-degenerate, $\Theta$ is bijection. Thus we have
  \begin{equation}
    \label{eq:fd-C-st-alg-dual-basis}
    \sum_{i = 1}^m a e_i \otimes e_i^* = \sum_{i = 1}^m e_i \otimes e_i^* a \quad (a \in A),
  \end{equation}
  since $\Theta(\sum_{i = 1}^m a e_i \otimes e_i^*)(x) = a x = \Theta(\sum_{i = 1}^m e_i \otimes e_i^* a)(x)$ for all $x \in A$. Now we set $v = \sum_{i = 1}^m e_i e_i^*$. It is obvious that $v$ is positive and central. Moreover, $v$ is invertible by \cite[Theorem~1.5]{MR2674691}. By~\eqref{eq:fd-C-st-alg-dual-basis}, we see that
  \begin{equation*}
    E = \sum_{i = 1}^m e_i \otimes e_i^* v^{-1}
    = \sum_{i = 1}^m a_i^* \otimes a_i, \quad \text{where $a_i = e_i^* v^{-1/2}$},
  \end{equation*}
  is a separability idempotent.
\end{remark}

\subsection{Example I. Weak Hopf $C^*$-algebras}

A {\em weak Hopf algebra} \cite{MR1726707,MR1793595} is an algebra $A$ over $\mathbb{C}$, which is a coalgebra $(A, \Delta, \varepsilon)$ over $\mathbb{C}$ at the same time, satisfying numerous axioms. To denote the comultiplication, we use Sweedler's notation:
\begin{equation*}
  \Delta(a) = a_{(1)} \otimes a_{(2)} \quad (a \in A).
\end{equation*}
We omit the detailed definition but note that a weak Hopf algebra $A$ has a special linear map $S: A \to A$ called the {\em antipode}. It is known that the antipode is an anti-algebra map and an anti-coalgebra map:
\begin{equation*}
  S(a b) = S(b) S(a),
  \quad S(a)_{(1)} \otimes S(a)_{(2)} = S(a_{(2)}) \otimes S(a_{(1)})
  \quad (a, b \in A).
\end{equation*}
A {\em weak Hopf $*$-algebra} \cite[\S 4.1]{MR1726707} is a weak Hopf algebra $A$ such that whose underlying algebra is a $*$-algebra and there holds
\begin{equation*}
  (a^*)_{(1)} \otimes (a^*)_{(2)} = (a_{(1)})^* \otimes (a_{(2)})^* \quad (a \in A).
\end{equation*}
If $A$ is a weak Hopf $*$-algebra, then $\varepsilon(a^*) = \overline{\varepsilon(a)}$ and $S(S(a)^*)^* = a$ for all $a \in A$. In particular, $S$ satisfies \eqref{eq:RF-st-alg-2}. The corresponding real form
\begin{equation*}
  A_0 := \{ a \in A \mid S(a)^* = a \}
\end{equation*}
will be referred to as the {\em canonical real form} of $A$.

Now let $A$ be a {\em finite-dimensional weak Hopf $C^*$-algebra} \cite[\S 4.1]{MR1726707}, {\it i.e.}, a weak Hopf algebra whose underlying $*$-algebra is finite-dimensional and satisfies the conditions of Remark~\ref{rem:fd-C-st-characterization}. Since the antipode $S$ of $A$ satisfies \eqref{eq:RF-st-alg-2}, $A$ has a unique element $g$ satisfying the conditions of Proposition~\ref{prop:BNS}. The element $g$ is called the {\em canonical grouplike element} \cite[\S 4.3]{MR1726707} since it satisfies
\begin{equation*}
  1_{(1)} g \otimes 1_{(2)} g = \Delta(g) = g 1_{(1)} \otimes g 1_{(2)},
\end{equation*}
where $1_{(1)} \otimes 1_{(2)} = \Delta(1_A)$. Define $\varepsilon_L, \varepsilon_R: A \to A$ by
\begin{equation*}
  \varepsilon_L(a) = \varepsilon(1_{(1)} a) 1_{(2)}, \quad \varepsilon_R(a) = \varepsilon(1_{(2)} a) 1_{(1)} \quad (a \in A).
\end{equation*}
A {\em Haar integral} \cite[\S3.6]{MR1726707} of $A$ is an element $\Lambda \in A$ such that
\begin{equation*}
  \varepsilon_L(\Lambda) = 1 = \varepsilon_R(\Lambda),
  \quad a \Lambda = \varepsilon_L(a) \Lambda,
  \quad \Lambda a = \Lambda \varepsilon_R(a)
  \quad (a \in A).
\end{equation*}
Existence and uniqueness of a Haar integral is proved in \cite[\S 4.2]{MR1726707}. Now let $\Lambda$ be the Haar integral of $A$. In the above notations, we propose the following {\em Frobenius-Schur theorem for weak Hopf $C^*$-algebras}:

\begin{theorem}
  \label{thm:FS-thm-WHCst}
  For a simple left $A$-module $V$,
  \begin{equation}
    \label{eq:FS-thm-WHCst}
    \chi_V(\Lambda_{(1)}\Lambda_{(2)}) = \frac{\chi_V(1)}{\chi_V(g)} \times \begin{cases}
      + 1 & \text{if $V$ is real}, \\
      \ 0 & \text{if $V$ is complex}, \\
      - 1 & \text{if $V$ is quaternionic} \\
    \end{cases}
  \end{equation}
  with respect to the canonical real form $A_0 = \{ a \in A \mid S(a)^* = a \}$.
\end{theorem}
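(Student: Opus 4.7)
The plan is to apply Theorem~\ref{thm:FS-thm-fd-C-st-alg} to $A$ equipped with its canonical real form $A_0$, antipode $S$, and canonical grouplike element $g$ from Proposition~\ref{prop:BNS}. Theorem~\ref{thm:FS-thm-fd-C-st-alg} tells us that for any separability idempotent $E = \sum_i E_i' \otimes E_i''$ of $A$, the signature $\sigma_{\mathcal{J}}(V) \in \{+1,0,-1\}$ (classifying $V$ as real/complex/quaternionic with respect to $A_0$) is computed as $\sum_i \chi_V(S(E_i')\,g\,E_i'')$. The work is therefore to exhibit a specific separability idempotent arising from the Haar integral $\Lambda$ and compute this sum in closed form, matching it against $\frac{\chi_V(g)}{\chi_V(1)}\chi_V(\Lambda_{(1)}\Lambda_{(2)})$.

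I would start with the ansatz $E = \sum \Lambda_{(1)} \otimes S(\Lambda_{(2)})$, possibly with a $g$-twist inserted into one of the tensor factors. The unit axiom $\sum \Lambda_{(1)} S(\Lambda_{(2)}) = 1$ is just the weak Hopf antipode identity $\sum a_{(1)} S(a_{(2)}) = \varepsilon_R(a)$ applied to $\Lambda$, together with $\varepsilon_R(\Lambda) = 1$. The balance condition $\sum a E_i' \otimes E_i'' = \sum E_i' \otimes E_i'' a$ must be derived from $a\Lambda = \varepsilon_L(a)\Lambda$ combined with the weak Hopf identities linking $\Delta$, $S$, $\varepsilon_L$ and $\varepsilon_R$; the $g$-twist serves to compensate for the non-cocommutativity of $\Lambda$ and the failure of $S^2 = \mathrm{id}$. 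Given such an $E$, one expands
\[
  \sum_i S(E_i') \, g \, E_i'' \;=\; \sum S(\Lambda_{(1)}) \, g \, S(\Lambda_{(2)})
\]
(up to the $g$-twist) using anti-multiplicativity of $S$, the relations $S(g) = g^{-1}$ and $S^2(x) = gxg^{-1}$, and the anti-coalgebra identity $\Delta \circ S = \tau \circ (S \otimes S) \circ \Delta$, which lets the two $S$-twisted factors of $\Delta(\Lambda)$ be recombined into $\Lambda_{(1)} \Lambda_{(2)}$ modulo a residue of $g$-factors. Taking $\chi_V$ and invoking Remark~\ref{rem:character} with $z = \Lambda_{(1)}\Lambda_{(2)}$ (which, as in the classical Hopf case, should be a central element of $A$) pulls the surplus $g$-factors outside as the scalar $\chi_V(g)/\chi_V(1)$, yielding
\[
  \sigma_{\mathcal{J}}(V) \;=\; \frac{\chi_V(g)}{\chi_V(1)}\,\chi_V(\Lambda_{(1)}\Lambda_{(2)}).
\]
Solving for $\chi_V(\Lambda_{(1)}\Lambda_{(2)})$ gives~\eqref{eq:FS-thm-WHCst}.

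The principal obstacle is twofold: first, verifying the balance condition for the chosen $g$-twisted separability idempotent in the genuinely weak Hopf setting, where $S^2$ is not the identity and $\Delta(\Lambda)$ is not symmetric; and second, establishing the centrality of $\Lambda_{(1)}\Lambda_{(2)}$ in $A$ (which is needed to apply Remark~\ref{rem:character}) and then tracking the $g$-bookkeeping through the anti-coalgebra manipulation so that the residual $g$-factors produced by repeated use of $S^2(x) = gxg^{-1}$ collapse to exactly the single scalar $\chi_V(g)/\chi_V(1)$. Both steps depend crucially on the uniqueness of the Haar integral of $A$ and on the normalization $\chi_V(g) = \chi_V(g^{-1}) > 0$ of Proposition~\ref{prop:BNS}(2), which, combined with the $C^*$-structure, pins down $g$ canonically.
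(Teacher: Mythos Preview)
Your plan is essentially the paper's proof (carried out there as the special case $\tau=\id_A$ of the proof of Theorem~\ref{thm:FS-thm-WHCst-tw}): build a separability idempotent from $\Delta(\Lambda)$, apply Theorem~\ref{thm:FS-thm-fd-C-st-alg}, and use centrality of $\Lambda_{(1)}\Lambda_{(2)}$ together with Remark~\ref{rem:character} to extract the scalar $\chi_V(g)/\chi_V(1)$.

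Two refinements worth noting. First, no $g$-twist is needed: the paper takes $E=S(\Lambda_{(1)})\otimes\Lambda_{(2)}$, and both the unit and balance conditions follow directly from the identities~\eqref{eq:Haar-integral-2} (replace $a$ by $S^{-1}(a)$ and apply $S\otimes\id$). Second, putting $S$ on the \emph{first} tensor leg rather than the second makes the computation a one-liner: the formula $\sum_i\chi_V(S(E_i')\,g\,E_i'')$ becomes
\[
\chi_V\bigl(S^2(\Lambda_{(1)})\,g\,\Lambda_{(2)}\bigr)=\chi_V\bigl(g\Lambda_{(1)}g^{-1}\cdot g\,\Lambda_{(2)}\bigr)=\chi_V\bigl(g\,\Lambda_{(1)}\Lambda_{(2)}\bigr),
\]
so the anti-coalgebra identity, the relation $S(\Lambda)=\Lambda$, and the ``$g$-bookkeeping'' you anticipate are all unnecessary. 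Centrality of $\Lambda_{(1)}\Lambda_{(2)}$ (which also follows from~\eqref{eq:Haar-integral-2}) and Remark~\ref{rem:character} then finish the argument exactly as you outline.
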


Note that $\chi_V(1) = \dim_{\mathbb{C}}(V) > 0$. By Remark~\ref{rem:fd-C-st-characterization}, we may assume that $V$ is a $*$-representation. Hence, since $g$ is positive and invertible, $\chi_V(g) > 0$. From this theorem, we obtain the following result not involving $g$: The left-hand side of \eqref{eq:FS-thm-WHCst} is positive, zero or negative according to whether $V$ is real, complex or quaternionic with respect to the canonical real form.

One can prove Theorem~\ref{thm:FS-thm-WHCst} by applying Theorem~\ref{thm:FS-thm-fd-C-st-alg} to the canonical real form of $A$. We omit the detail since Theorem~\ref{thm:FS-thm-WHCst} is a special case of the following {\em twisted version} of Theorem~\ref{thm:FS-thm-WHCst}. By an {\em automorphism} of $A$, we mean an automorphism $\tau: A \to A$ of the underlying algebra such that
\begin{equation*}
  \tau(a^*) = \tau(a)^*,
  \quad \tau(a)_{(1)} \otimes \tau(a)_{(2)} = \tau(a_{(1)}) \otimes \tau(a_{(2)})
  \quad (a \in A).
\end{equation*}
If $\tau$ is an automorphism of $A$, then
\begin{equation*}
  \varepsilon \circ \tau = \varepsilon,
  \quad S \circ \tau = \tau \circ S,
  \quad \tau(g) = g,
  \quad \tau(\Lambda) = \Lambda
\end{equation*}
by their uniqueness. Now let $\tau$ be an {\em involution} of $A$, {\it i.e.}, an automorphism of $A$ such that $\tau^2 = \id_A$. Then:

\begin{theorem}
  \label{thm:FS-thm-WHCst-tw}
  For a simple left $A$-module $V$, we set
  \begin{equation*}
    \sigma_{\tau}(V) = \frac{\chi_V(g)}{\chi_V(1)} \cdot \chi_V(\tau(\Lambda_{(1)}) \Lambda_{(2)}).
  \end{equation*}
  Then $\sigma_{\tau}(V) \in \{ 0, \pm 1 \}$. Moreover, $\sigma_{\tau}(V)$ has the following meaning:
  \begin{enumerate}
  \item $\sigma_{\tau}(V) = +1$ if and only if $V$ admits a basis such that the matrix representation $\rho: A \to M_n(\mathbb{C})$ with respect to that basis fulfills:
    \begin{equation*}
      \rho(\tau(a)) = \overline{\rho(a)} \text{\quad for all $a \in A_0$}.
    \end{equation*}
  \item $\sigma_{\tau}(V) = -1$ if and only if $V$ does not admit such a basis but satisfies
    \begin{equation*}
      \chi_V(\tau(a)) = \overline{\chi_V(a)} \text{\quad for all $h \in A_0$}.
    \end{equation*}
  \item $\sigma_{\tau}(V) = 0$ if and only if $\chi_V(\tau(a)) \ne \overline{\chi_V(a)}$ for some $a \in A_0$.
  \end{enumerate}
\end{theorem}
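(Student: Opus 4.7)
The plan is to reduce to Theorem~\ref{thm:FS-thm-fd-C-st-alg} by exhibiting a suitable twisted real form of $A$. Set $S_\tau := S \circ \tau$; this equals $\tau \circ S$ because $\tau$ commutes with the antipode. Using $\tau\circ * = *\circ\tau$ and $\tau^2 = \id_A$, one checks directly that $S_\tau$ is an anti-algebra map satisfying $S_\tau(S_\tau(a)^*)^* = a$ and that $S_\tau^2 = S^2$. Hence $A_0^\tau := \{a\in A \mid S_\tau(a)^* = a\}$ is a real form of $A$ whose associated anti-algebra map is $S_\tau$; by the uniqueness in Proposition~\ref{prop:BNS}, the canonical grouplike element $g$ also fulfills the conditions of that proposition for $S_\tau$. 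Let $\mathcal{D}_\tau$ and $\mathcal{J}_\tau$ denote the resulting $*$-compatible dual and real structures on $\fdRep(A)$.

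Applying Theorem~\ref{thm:FS-thm-fd-C-st-alg} to $A_0^\tau$ gives $\nu_{\mathcal{D}_\tau}(V) = \sigma_{\mathcal{J}_\tau}(V) \in \{0, \pm 1\}$ for every simple $V \in \fdMod{A}$. To translate this into assertions (1)--(3), observe that the conjugation $\overline{\phantom{a}}_\tau$ attached to $A_0^\tau$ restricts on $A_0$ to $\tau$: for $a \in A_0$, $\overline{a}_\tau = S_\tau(a)^* = \tau(S(a)^*) = \tau(a)$. In particular $\tau$ preserves $A_0$. It follows that $\rho(A_0^\tau) \subset M_n(\mathbb{R})$ is equivalent to $\rho(\tau(a)) = \overline{\rho(a)}$ for all $a \in A_0$ (the two conjugations agree on $A_0$ and extend anti-$\mathbb{C}$-linearly), and likewise $\chi_V(A_0^\tau) \subset \mathbb{R}$ is equivalent to $\chi_V\circ\tau = \overline{\chi_V}$ on $A_0$. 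Theorems~\ref{thm:RF-sign-1} and~\ref{thm:RF-sign-2} then yield (1), (2), (3).

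It remains to identify $\nu_{\mathcal{D}_\tau}(V)$ with $\frac{\chi_V(g)}{\chi_V(1)}\chi_V(\tau(\Lambda_{(1)})\Lambda_{(2)})$. Into the formula of Theorem~\ref{thm:FS-ind-sep-alg} I would substitute a separability idempotent $E$ built from the Haar integral $\Lambda$: in the ordinary Hopf case, $E = \Lambda_{(1)} \otimes S(\Lambda_{(2)})$, and in the weak Hopf setting the standard B\"ohm--Nill--Szlach\'anyi analogue. Using $\tau$-invariance and $S$-invariance of $\Lambda$, the coalgebra property of $\tau$, trace cyclicity, and the relations $S^2(a) = gag^{-1}$, $S(g) = g^{-1}$, the sum $\sum_i\chi_V(S_\tau(E_i')gE_i'')$ reduces, after moving factors across $\chi_V$ and undoing an $S$, to $\chi_V(g\cdot z)$ with $z := \tau(\Lambda_{(1)})\Lambda_{(2)}$. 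Centrality of $z$ in $A$ (a consequence of the integral identities $a\Lambda = \varepsilon_L(a)\Lambda$, $\Lambda a = \Lambda\varepsilon_R(a)$ combined with $\tau$ being a coalgebra involution) then allows Remark~\ref{rem:character} to give $\chi_V(gz) = \chi_V(g)\chi_V(z)/\chi_V(1)$, completing the identification.

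The hard part will be this final computation: fixing a usable separability idempotent in the weak Hopf setting (where $\Delta(1)\ne 1\otimes 1$ is a genuine complication) and establishing centrality of $z = \tau(\Lambda_{(1)})\Lambda_{(2)}$ from the weak-Hopf integral axioms. The reduction steps in the first two paragraphs are formal consequences of the machinery of \S\ref{subsec:RF-alg} and \S\ref{sec:FS-thm-fd-C-star}.
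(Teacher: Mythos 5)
Your first two paragraphs coincide with the paper's own reduction: the paper also forms $S^\tau=\tau\circ S$, takes $A_0^\tau=\{a\in A\mid S^\tau(a)^*=a\}$, applies Theorem~\ref{thm:FS-thm-fd-C-st-alg} to this real form, and then translates ``real/complex/quaternionic with respect to $A_0^\tau$'' into conditions on $A_0$ via the equivalence $F(A_0^\tau)\subset M_n(\mathbb{R})\iff F(\tau(z))=\overline{F(z)}$ for $z\in A_0$ (equation~\eqref{eq:FS-thm-WHCst-3}, proved there by the decomposition $x=2^{-1}(a+\tau(\overline{a}))$, $y=(2\mathbf{i})^{-1}(a-\tau(\overline{a}))$, which is what your parenthetical ``the two conjugations agree on $A_0$'' amounts to). Your appeal to the uniqueness in Proposition~\ref{prop:BNS} via $(S^\tau)^2=S^2$ is correct and implicitly used in the paper; also note that only Theorem~\ref{thm:RF-sign-1} is needed for (1)--(3), not Theorem~\ref{thm:RF-sign-2}.

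The genuine gap is exactly the step you defer: you neither exhibit a separability idempotent in the weak Hopf setting nor prove centrality of $z=\tau(\Lambda_{(1)})\Lambda_{(2)}$, and these carry the real content of the identification $\nu_{\mathcal{D}_\tau}(V)=\sigma_\tau(V)$. The paper supplies both from the single pair of identities $\Lambda_{(1)}\otimes a\Lambda_{(2)}=S(a)\Lambda_{(1)}\otimes\Lambda_{(2)}$ and $\Lambda_{(1)}a\otimes\Lambda_{(2)}=\Lambda_{(1)}\otimes\Lambda_{(2)}S(a)$ (equation~\eqref{eq:Haar-integral-2}, quoted from B\"ohm--Nill--Szlach\'anyi, Lemma~3.2). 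First, these show that $E=S(\Lambda_{(1)})\otimes\Lambda_{(2)}$ is a separability idempotent: the commutation property comes from applying $S$ to the second identity, and the normalization is $S(\Lambda_{(1)})\Lambda_{(2)}=\varepsilon_R(\Lambda)=1$, which is how the complication $\Delta(1)\neq 1\otimes 1$ is absorbed. Second, applying $\tau$ to the first tensor factor of each identity and multiplying gives $\tau(\Lambda_{(1)})\tau(a)\Lambda_{(2)}=zS(a)$ and (after replacing $a$ by $\tau(a)$ and using $\tau S=S\tau$, $\tau^2=\id$) $\tau(\Lambda_{(1)})\tau(a)\Lambda_{(2)}=S(a)z$, so $S(a)z=zS(a)$ for all $a$, whence $z$ is central since $S$ is bijective. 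Note also that with the paper's $E$ (antipode on the \emph{first} leg, unlike your Hopf-case guess $\Lambda_{(1)}\otimes S(\Lambda_{(2)})$) the evaluation of Theorem~\ref{thm:FS-ind-sep-alg} needs no trace cyclicity or $S(\Lambda)=\Lambda$: $\chi_V(S^\tau S(\Lambda_{(1)})\,g\,\Lambda_{(2)})=\chi_V(\tau(g\Lambda_{(1)}g^{-1})g\Lambda_{(2)})=\chi_V(g\,\tau(\Lambda_{(1)})\Lambda_{(2)})$, using $S^2(a)=gag^{-1}$ and $\tau(g)=g$; Remark~\ref{rem:character} with the central element $z$ then gives the stated normalization $\chi_V(g)\chi_V(z)/\chi_V(1)$. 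To complete your proof you should import~\eqref{eq:Haar-integral-2} and carry out these verifications; as written, the argument stops short of them.
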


Applying this theorem to the group algebra of a finite group, we obtain the result of Kawanaka and Matsuyama \cite{MR1078503}. This theorem is a generalization of their result to weak Hopf $C^*$-algebras ({\it cf}. Sage and Vega \cite{MR2879228}).

\begin{proof}
  Since $S^\tau := \tau \circ S$ is a linear map satisfying~\eqref{eq:RF-st-alg-2},
  \begin{equation*}
    A_0^\tau := \{ a \in A \mid S^\tau(a)^* = a \}
  \end{equation*}
  is a real form. We define $\tilde{\sigma}_{\tau}(V)$ to be $+1$, $0$ or $-1$ according to whether $V$ is real, complex or quaternionic with respect to the real form $A_0^{\tau}$. By the definition of the Haar integral, we have
  \begin{equation}
    \label{eq:Haar-integral-2}
    \Lambda_{(1)} \otimes a \Lambda_{(2)}
    = S(a) \Lambda_{(1)} \otimes \Lambda_{(2)}, \quad
    \Lambda_{(1)} a \otimes \Lambda_{(2)}
    = \Lambda_{(1)} \otimes \Lambda_{(2)} S(a)
  \end{equation}
  for all $a \in A$ \cite[Lemma~3.2]{MR1726707}. This implies that $E = S(\Lambda_{(1)}) \otimes \Lambda_{(2)}$ is a separability idempotent of $A$. Applying Theorem~\ref{thm:FS-thm-fd-C-st-alg} to $A_0^\tau$, we obtain
  \begin{equation*}
    \tilde{\sigma}_{\tau}(V) = \chi_V(S^{\tau}S(\Lambda_{(1)}) g \Lambda_{(2)}) = \chi_V(g \cdot \tau(\Lambda_{(1)}) \Lambda_{(2)}).
  \end{equation*}
  By~\eqref{eq:Haar-integral-2}, $\tau(\Lambda_{(1)}) \Lambda_{(2)}$ is a central element. Hence, by Remark~\ref{rem:character},
  \begin{equation}
    \label{eq:FS-thm-WHCst-2}
    \tilde{\sigma}_{\tau}(V)
    = \frac{\chi_V(g)}{\chi_V(1)} \chi_V(\tau(\Lambda_{(1)}) \Lambda_{(2)})
    = \sigma_{\tau}(V).
  \end{equation}
  Theorem~\ref{thm:FS-thm-WHCst} follows immediately from \eqref{eq:FS-thm-WHCst-2} with $\tau = \id_A$. For general cases, we need to interpret \eqref{eq:FS-thm-WHCst-2} in terms of the canonical real form $A_0$. For this purpose, let, in general, $F: A \to M_n(\mathbb{C})$ be a $\mathbb{C}$-linear map. Then:
  \begin{equation}
    \label{eq:FS-thm-WHCst-3}
    F(A_0^\tau) \subset M_n(\mathbb{R}) \iff F(\tau(z)) = \overline{F(z)} \text{\quad for all $z \in A_0$}.
  \end{equation}
  Indeed, suppose that $F(A_0^\tau) \subset M_n(\mathbb{R})$. For $a \in A$, we set $x = 2^{-1}(a+\tau(\overline{a}))$ and $y = (2\mathbf{i})^{-1} (a-\tau(\overline{a}))$, where $\overline{a} = S(a)^*$. It is easy to check that $x, y \in A_0^\tau$, $a = x + \mathbf{i} y$ and $\tau(\overline{a}) = x - \mathbf{i} y$. Hence, if $a \in A_0$, then:
  \begin{equation*}
    F(\tau(a)) = F(\tau(\overline{a}))
    = F(x) - \mathbf{i} F(y)
    = \overline{F(x) + \mathbf{i} F(y)}
    = \overline{F(a)}.
  \end{equation*}
  Conversely, suppose that $F(\tau(z)) = \overline{F(z)}$ for all $z \in A_0$. Let $a \in A$ and write it as $a = x + \mathbf{i} y$ ($x, y \in A_0$). If $a \in A_0$, then:
  \begin{align*}
    \overline{F(a)}
    = \overline{F(\tau(\overline{a}))}
    & = \overline{F(\tau(x)) - \mathbf{i} \cdot F(\tau(y))} \\
    & = \overline{F(\tau(x))} + \mathbf{i} \cdot \overline{F(\tau(y))}
    = F(x) + \mathbf{i} F(x)
    = F(a).
  \end{align*}
  Now (1) is proved as follows: By~\eqref{eq:FS-thm-WHCst-2}, $\sigma_{\tau}(V) = +1$ if and only if $V$ admits a basis such that the matrix representation $\rho: A \to M_n(\mathbb{C})$ with respect to that basis has the following property:
  \begin{equation}
    \label{eq:FS-thm-WHCst-4}
    \rho(A_0^\tau) \subset M_n(\mathbb{R}).
  \end{equation}
  By~\eqref{eq:FS-thm-WHCst-3} with $F = \rho$, we see that \eqref{eq:FS-thm-WHCst-4} is equivalent to that $\rho(\tau(a)) = \overline{\rho(a)}$ holds for all $a \in A_0$. Hence (1) follows. To show (2) and (3), use \eqref{eq:FS-thm-WHCst-3} with $F = \chi_V$.
\end{proof}

\subsection{Example II. Table algebras}

Let $A$ be a finite-dimensional $*$-algebra with basis $\mathcal{B} = \{ b_i \}_{i \in I}$, where $I$ is an index set. We suppose that $\mathcal{B}$ is closed under the $*$-operation; thus, for each $i \in I$, we can define $i^* \in I$ by $(b_i)^* = b_{i^*}$. For $i, j, k \in I$, we define $p_{i j}^k \in \mathbb{C}$ by
\begin{equation*}
  b_i \cdot b_j = \sum_{k \in I} p_{i j}^k b_k \quad (i, j \in I).
\end{equation*}
The pair $(A, \mathcal{B})$ is called a {\em table algebra} if the following conditions are satisfied:
\begin{enumerate}
\item[(T0)] $1_A \in \mathcal{B}$; the element $i \in I$ such that $b_i = 1_A$ will be denoted by $0$.
\item[(T1)] $p_{i j}^k \in \mathbb{R}$ for all $i, j, k \in I$.
\item[(T2)] $p_{i i^*}^0 = p_{i^* i}^0 > 0$ and $p_{i j}^0 = 0$ whenever $i \ne j^*$.
\end{enumerate}
Note that in literature the word `table algebra' has been used with several different meanings, see \cite{MR2535395}. Our definition is equivalent to \cite[Definition 1.16]{MR2535395}. One can find many examples of table algebras in \cite{MR2535395} and references therein. The Bose-Mesner algebra of an association scheme is an important example.

Now suppose that $(A, \mathcal{B})$ is a table algebra. By (T1), $A_0 = \mathrm{span}_{\mathbb{R}}(\mathcal{B})$ is a real form of $A$, which we call the {\em canonical real form} of $(A, \mathcal{B})$. Let $\phi: A \to \mathbb{C}$ be the linear map determined by $\phi(b_i) = \delta_{i 0}$. By (T2), the sesquilinear form
\begin{equation*}
  \langle a | b \rangle = \phi(a^* b) \quad (a, b \in A)
\end{equation*}
is an inner product on $A$ making it into a $*$-representation \cite[\S2]{MR2535395}. Note that $\mathcal{B}$ is orthogonal (but not normal in general) with respect this inner product. Hence, by the arguments of Remark \ref{rem:fd-C-st-sep-idemp},
\begin{equation}
  \label{eq:tbl-alg-sep-idemp}
  E = \sum_{i \in I} \frac{1}{p_{i i^*}^0} b_{i^*} \otimes b_i v^{-1},
  \quad \text{where }
  v = \sum_{i \in I} \frac{1}{p_{i i^*}^0} b_{i^*}  b_i,
\end{equation}
is a separability idempotent of $A$. Now we formulate {\em the Frobenius-Schur theorem for table algebras} as follows:

\begin{theorem}
  \label{thm:FS-thm-tbl-alg}
  For a simple left $A$-module $V$,
  \begin{equation}
    \label{eq:FS-thm-tbl-alg}
    \sum_{i \in I} \frac{1}{p_{i i^*}^0} \chi_V(b_i^2)
    = \frac{\chi_V(v)}{\chi_V(1)} \times \begin{cases}
      + 1 & \text{if $V$ is real}, \\
      \ 0 & \text{if $V$ is complex}, \\
      - 1 & \text{if $V$ is quaternionic}
    \end{cases}
  \end{equation}
  with respect to the canonical real form $A_0 = \mathrm{span}_{\mathbb{R}}(\mathcal{B})$.
\end{theorem}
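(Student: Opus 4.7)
The plan is to deduce Theorem~\ref{thm:FS-thm-tbl-alg} by applying Theorem~\ref{thm:FS-thm-fd-C-st-alg} to the canonical real form $A_0 = \mathrm{span}_{\mathbb{R}}(\mathcal{B})$ together with the separability idempotent~\eqref{eq:tbl-alg-sep-idemp}. As a preliminary, I must check that $A$ is a finite-dimensional $C^*$-algebra: the inner product $\langle a|b\rangle = \phi(a^*b)$ introduced just above~\eqref{eq:tbl-alg-sep-idemp} makes the left regular representation into a faithful $*$-representation of $A$, so condition~(4) of Remark~\ref{rem:fd-C-st-characterization} is fulfilled.

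Next I identify the anti-algebra map $S$ associated with $A_0$ and the canonical element $g$ of Proposition~\ref{prop:BNS}. Writing an arbitrary $a \in A$ as $\sum_i c_i b_i$ with $c_i \in \mathbb{C}$, conjugation with respect to $A_0$ merely conjugates the coefficients, so
\[
S(a) \;=\; \overline{a}^* \;=\; \sum_i c_i\, b_{i^*};
\]
in particular $S^2 = \mathrm{id}_A$. The relation $S^2(a) = g a g^{-1}$ then forces $g$ to be central, and one checks that $g = 1$ satisfies conditions~(1)--(3) of Proposition~\ref{prop:BNS} trivially. By the uniqueness clause of that proposition, $g = 1$.

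With these identifications one has $S(b_{i^*}) = b_i$, and the formula of Theorem~\ref{thm:FS-thm-fd-C-st-alg} applied to $E$ in~\eqref{eq:tbl-alg-sep-idemp} specialises to
\[
\nu(V) \;=\; \sum_{i \in I} \frac{1}{p_{ii^*}^0}\,\chi_V\bigl(b_i^2\, v^{-1}\bigr).
\]
The final step is to pull the factor $v^{-1}$ outside each character. The essential point is that $v$ is central: the separability identity $\sum_i a E_i' \otimes E_i'' = \sum_i E_i' \otimes E_i''\, a$ for $E$, combined with the ``dual basis'' identity $\sum_i \tfrac{1}{p_{ii^*}^0}\, a b_{i^*} \otimes b_i = \sum_i \tfrac{1}{p_{ii^*}^0}\, b_{i^*} \otimes b_i a$ (a direct consequence of the bijectivity of the map $\Theta$ of Remark~\ref{rem:fd-C-st-sep-idemp}), forces $v^{-1}$ to commute with every $a \in A$. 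Applying the second formula of Remark~\ref{rem:character} with $z = v$ then gives $\chi_V(v^{-1} b_i^2) = \chi_V(1)\chi_V(b_i^2)/\chi_V(v)$, and factoring out $\chi_V(1)/\chi_V(v)$ produces exactly~\eqref{eq:FS-thm-tbl-alg}. The most delicate ingredients are the centrality of $v$ and the pinning down of $g = 1$ from Proposition~\ref{prop:BNS}; once these are in place, the rest of the argument is pure bookkeeping.
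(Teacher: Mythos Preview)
Your proof is correct and follows essentially the same route as the paper: apply Theorem~\ref{thm:FS-thm-fd-C-st-alg} to the canonical real form with the separability idempotent~\eqref{eq:tbl-alg-sep-idemp}, observe that $S^2=\id_A$ forces $g=1$, and use the centrality of $v$ together with Remark~\ref{rem:character} to extract the factor $\chi_V(v)/\chi_V(1)$. The paper actually obtains Theorem~\ref{thm:FS-thm-tbl-alg} as the special case $\tau=\id$ of the twisted version proved immediately afterwards, but at $\tau=\id$ that argument reduces to exactly yours; the only cosmetic difference is that the centrality of $v$ follows more directly by multiplying the two tensor legs in the dual-basis identity~\eqref{eq:fd-C-st-alg-dual-basis} than by the route you sketch.
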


As we have seen in Remark \ref{rem:fd-C-st-sep-idemp}, $\chi_V(v) > 0$. Hence, in particular, the left-hand side of \eqref{eq:FS-thm-tbl-alg} is positive, zero or negative according to whether $V$ is real, complex or quaternionic with respect to $A_0$.

Instead of proving Theorem~\ref{thm:FS-thm-tbl-alg}, we prove the following {\em twisted version}: By an {\em involution} of the table algebra $(A, \mathcal{B})$, we mean an automorphism $\tau$ of the $*$-algebra $A$ such that $\tau(\mathcal{B}) \subset \mathcal{B}$ and $\tau^2 = \id_A$. Now let $\tau$ be an involution of $(A, \mathcal{B})$. For each $i \in I$, we define $\tau(i) \in I$ by $b_{\tau(i)} = \tau(b_i)$.

\begin{theorem}
  For a simple left $A$-module $V$, we set
  \begin{equation*}
    \sigma_{\tau}(V) = \frac{\chi_V(1)}{\chi_V(v)}
    \sum_{i \in I} \frac{1}{p_{i i^*}^0} \chi_V(b_{\tau(i)} b_i).
  \end{equation*}
  Then $\sigma_{\tau}(V) \in \{ 0, \pm 1 \}$. Moreover, $\sigma_{\tau}(V)$ has the following meaning:
  \begin{enumerate}
  \item $\sigma_{\tau}(V) = +1$ if and only if $V$ admits a basis such that the matrix representation $\rho: A \to M_n(\mathbb{C})$ with respect to that basis fulfills:
    \begin{equation*}
      \rho(b_{\tau(i)}) = \overline{\rho(b_i)} \quad \text{for all $i \in I$}.
    \end{equation*}
  \item $\sigma_{\tau}(V) = -1$ if and only if $V$ does not admit such a basis but satisfies:
    \begin{equation*}
      \chi_V(b_{\tau(i)}) = \overline{\chi_V(b_i)} \quad \text{for all $i \in I$}.
    \end{equation*}
  \item $\sigma_{\tau}(V) = 0$ if and only if $\chi_V(b_{\tau(i)}) \ne \overline{\chi_V(b_i)}$ for some $i \in I$.
  \end{enumerate}
\end{theorem}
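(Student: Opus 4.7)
The plan is to mirror the proof of Theorem~\ref{thm:FS-thm-WHCst-tw}, with the Haar integral replaced by the separability idempotent~\eqref{eq:tbl-alg-sep-idemp} and the antipode replaced by the map $S$ associated with the canonical real form $A_0 = \mathrm{span}_\mathbb{R}(\mathcal{B})$. First I would set $S^\tau := \tau \circ S$. Since $S(b_i) = b_{i^*}$ and $\tau$ is a $*$-algebra automorphism preserving $\mathcal{B}$, the identity $\tau(b_{i^*}) = \tau(b_i)^* = b_{\tau(i)^*}$ gives $\tau \circ S = S \circ \tau$. Combined with $\tau^2 = \id_A$ and $S^2 = \id_A$, this shows $(S^\tau)^2 = \id_A$ and that $S^\tau$ is an anti-algebra map satisfying~\eqref{eq:RF-st-alg-2}. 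Let $A_0^\tau = \{ a \in A \mid S^\tau(a)^* = a \}$ be the associated real form and write $\tilde{\sigma}_\tau(V)$ for its $\mathcal{J}$-signature at $V$, which lies in $\{ 0, \pm 1 \}$ by Lemma~\ref{lem:real-str-1}.

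Next I identify the positive invertible element $g^\tau \in A$ attached to $S^\tau$ by Proposition~\ref{prop:BNS}. Since $(S^\tau)^2 = \id_A$, condition~(1) of that proposition forces $g^\tau$ to be central, so it acts by a scalar $\lambda_V$ on each simple left $A$-module $V$. The normalization $\chi_V(g^\tau) = \chi_V((g^\tau)^{-1}) > 0$ gives $\lambda_V = \lambda_V^{-1} > 0$, whence $\lambda_V = 1$, and semisimplicity of the finite-dimensional $C^*$-algebra $A$ forces $g^\tau = 1$. Applying Theorem~\ref{thm:FS-thm-fd-C-st-alg} to the real form $A_0^\tau$, with the separability idempotent~\eqref{eq:tbl-alg-sep-idemp} and using $S(b_{i^*}) = b_i$, yields
\begin{equation*}
  \tilde{\sigma}_\tau(V)
  = \sum_{i \in I} \frac{1}{p_{i i^*}^0} \chi_V(S^\tau(b_{i^*}) \cdot b_i v^{-1})
  = \sum_{i \in I} \frac{1}{p_{i i^*}^0} \chi_V(b_{\tau(i)} b_i v^{-1}).
\end{equation*}
Since $v$ is central and $V$ is simple, cyclicity of the character and Remark~\ref{rem:character} allow me to pull $v^{-1}$ outside, producing $\tilde{\sigma}_\tau(V) = \sigma_\tau(V)$.

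For the three case characterizations I would establish the analogue of~\eqref{eq:FS-thm-WHCst-3}: for any $\mathbb{C}$-linear map $F : A \to M_n(\mathbb{C})$, $F(A_0^\tau) \subset M_n(\mathbb{R})$ if and only if $F(\tau(z)) = \overline{F(z)}$ for all $z \in A_0$. The proof is identical to the weak Hopf case once one observes that $\tau$ commutes with the conjugation $a \mapsto \overline{a} = S(a)^*$, so that the decompositions $x = \tfrac{1}{2}(a + \tau(\overline{a}))$ and $y = \tfrac{1}{2\mathbf{i}}(a - \tau(\overline{a}))$ behave as in the proof of Theorem~\ref{thm:FS-thm-WHCst-tw}. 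Substituting $F = \rho$ (the matrix representation of $V$) together with Theorem~\ref{thm:RF-sign-1} applied to $A_0^\tau$ gives~(1), and taking $F = \chi_V$ gives~(2) and~(3).

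The main obstacle, I expect, is the identification $g^\tau = 1$: this is the step that genuinely uses semisimplicity of finite-dimensional $C^*$-algebras rather than any algebraic feature shared with the weak Hopf setting, and without it one cannot cleanly extract the factor $\chi_V(v)/\chi_V(1)$ appearing in $\sigma_\tau(V)$. Once this identification is in place, the remainder is straightforward bookkeeping adapted from the proof of Theorem~\ref{thm:FS-thm-WHCst-tw}.
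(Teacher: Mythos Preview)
Your proposal is correct and follows essentially the same route as the paper's proof: define $S^\tau$, pass to the real form $A_0^\tau$, apply Theorem~\ref{thm:FS-thm-fd-C-st-alg} with the separability idempotent~\eqref{eq:tbl-alg-sep-idemp}, pull out $v^{-1}$ via Remark~\ref{rem:character}, and then translate back to $A_0$ using~\eqref{eq:FS-thm-WHCst-3}. Your explicit verification that $g^\tau = 1$ (forced by $(S^\tau)^2 = \id_A$ together with the normalization in Proposition~\ref{prop:BNS}) is a step the paper silently uses but does not write out, so you have in fact filled a small gap rather than introduced one; the remaining reduction to basis elements $b_i$ in place of arbitrary $z \in A_0$ is immediate since $A_0 = \mathrm{span}_{\mathbb{R}}(\mathcal{B})$ and both sides of the condition are $\mathbb{R}$-linear in $z$.
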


If $G$ is a finite group, then the pair $(\mathbb{C}G, G)$ is a table algebra. Applying this theorem, we again obtain the result of Kawanaka and Matsuyama \cite{MR1078503}. Thus this theorem is another generalization of their result.

\begin{proof}
  Define a linear map $S^{\tau}: A \to A$ by $S^{\tau}(b_i) = \tau(b_i)^*$ ($i \in I$). Then
  \begin{equation*}
    A_0^{\tau} := \{ a \in A \mid S^{\tau}(a)^* = a \}
  \end{equation*}
  is a real form of $A$. We define $\tilde{\sigma}_{\tau}(V)$ to be $+1$, $0$ or $-1$ according to whether $V$ is real, complex or quaternionic with respect to the real form $A_0^{\tau}$. Note that the element $v$ is central. By Theorem~\ref{thm:FS-thm-fd-C-st-alg} with $E$ given by~\eqref{eq:tbl-alg-sep-idemp}, we obtain
  \begin{equation*}
    \tilde{\sigma}_{\tau}(V)
    = \sum_{i \in I} \frac{1}{p_{i i^*}} \chi_V(b_{\tau(i)} b_i v^{-1})
    = \frac{\chi_V(1)}{\chi_V(v)}
    \sum_{i \in I} \frac{1}{p_{i i^*}} \chi_V(b_{\tau(i)} b_i)
    = \sigma_{\tau}(V).
  \end{equation*}
  Now (1)--(3) are proved by interpreting this result in terms of the canonical real form $A_0$ in a similar way as the proof of Theorem~\ref{thm:FS-thm-WHCst-tw}.
\end{proof}

\section{Compact quantum groups}
\label{sec:CQG}

\subsection{Conventions}

Given a coalgebra $C = (C, \Delta, \varepsilon)$ over $\mathbb{C}$, we denote by $\mathcal{M}^C_{\fd}$ the category of finite-dimensional right $C$-comodules. The coaction of $C$ on $V \in \mathcal{M}^C_{\fd}$ is expressed by the Sweedler notation:
\begin{equation*}
  V \to V \otimes C, \quad v \mapsto v_{(0)} \otimes v_{(1)} \quad (v \in V).
\end{equation*}
If a basis $\{ v_i \}_{i = 1}^n$ of $V$ is given, then we can define $(c_{i j}) \in M_n(C)$ by
\begin{equation}
  \label{eq:matrix-corep}
  v_{j(0)} \otimes v_{j(1)} = \sum_{i = 1}^n v_i \otimes c_{i j} \quad (j = 1, \dotsc, n).
\end{equation}
The matrix $(c_{i j})$ is called the {\em associated matrix corepresentation with respect to the basis $\{ v_i \}$}. The subspace spanned by $c_{i j}$'s will be denoted by $C(V)$:
\begin{equation*}
  C(V) := \mathrm{span}_{\mathbb{C}} \{ c_{i j} \mid i, j = 1, \dotsc, n \}.
\end{equation*}
Note that $C(V)$ does not depend on the choice of the basis $\{ v_i \}$. The trace of $(c_{i j})$, $t_V := c_{1 1} + \dotsb + c_{n n} \in C(V)$, also does not depend on the choice of $\{ v_i \}$. We call $t_V$ the {\em character} of $V$.

\subsection{Real forms of a coalgebra}

By a {\em real form} of a coalgebra $C$, we mean an $\mathbb{R}$-subspace $C_0 \subset C$ such that
\begin{equation}
  \label{eq:RF-cog-def}
  C = C_0 \oplus \mathbf{i} C_0, \quad \Delta(C_0) \subset \mathrm{span}_{\mathbb{R}} \{ c' \otimes c'' \mid c', c'' \in C_0 \}.
\end{equation}
Given a real form $C_0 \subset C$, we define $\overline{x + \mathbf{i} y} = x - \mathbf{i} y$ ($x, y \in C_0$). For $c \in C$, we call $\overline{c}$ the {\em conjugate} of $c$ with respect to the real form $C_0$. By~\eqref{eq:RF-cog-def}, taking the conjugate is an anti-linear operator on $C$ such that
\begin{equation}
  \label{eq:RF-cog-conj}
  \overline{\overline{c}} = c, \quad \Delta(\overline{c}) = \overline{c_{(1)}} \otimes \overline{c_{(2)}}.
\end{equation}
Conversely, if we are given an anti-linear operator $\overline{\phantom{a}}: C \to C$ with this property, then $\{ c \in C \mid \overline{c} = c \}$ is a real form of $C$. Thus giving a real form of $C$ is equivalent to giving an anti-linear operator on $C$ satisfying~\eqref{eq:RF-cog-conj}.

Fix a real form $C_0 \subset C$. If $V \in \mathcal{M}^C_{\fd}$, then $C$ also coacts on $\overline{V}$ by
\begin{equation*}
  \overline{V} \to \overline{V} \otimes_{\mathbb{C}} C,
  \quad \overline{v} \mapsto \overline{v_{(0)}} \otimes \overline{v_{(1)}}
  \quad (v \in V).
\end{equation*}
We call $\overline{V}$ the {\em conjugate} of $V$ with respect to $C_0$. In the same way as in \S\ref{subsec:RF-alg}, taking the conjugate defines a real structure for $\mathcal{M}^C_{\fd}$, which will be referred to as the {\em real structure for $\mathcal{M}^C_{\fd}$ associated with the real form $C_0$}. Following \S\ref{subsec:RF-alg}, we introduce the following definition:

\begin{definition}
  Let $C$ be a coalgebra with real form $C_0$, and let $\mathcal{J}$ be the real structure for $\mathcal{M}^C_{\fd}$ associated with $C_0$. We say that a simple comodule $V \in \mathcal{M}^C_{\fd}$ is {\em real}, {\em complex} and {\em quaternionic} with respect to $C_0$ if $\sigma_{\mathcal{J}}(V)$ is equal to $+1$, $0$ and $-1$, respectively.
\end{definition}

To characterize the $\mathcal{J}$-signature of a simple comodule, we recall that the dual space $A := C^{\ast}$ is an algebra over $\mathbb{C}$ with respect to the convolution product $\star$ defined by $\langle a \star b, c \rangle = \langle a, c_{(1)} \rangle \langle b, c_{(2)} \rangle$ ($a, b \in A$, $c \in C$). The algebra $C^*$ is called the {\em dual algebra} of $C$. There is the following relation between real forms of $C$ and those of $A$.

\begin{lemma}
  \label{lem:RF-dual-alg}
  If $C_0$ is a real form of a coalgebra $C$, then
  \begin{equation}
    \label{eq:RF-dual-alg-1}
    A_0 = \{ a \in A \mid \text{$a(c) \in \mathbb{R}$ for all $c \in C_0$} \}
  \end{equation}
  is a real form of $A = C^{\ast}$. The real form $C_0$ can be recovered from $A_0$ by
  \begin{equation}
    \label{eq:RF-dual-alg-2}
    C_0 = \{ c \in C \mid \text{$a(c) \in \mathbb{R}$ for all $a \in A_0$} \}.
  \end{equation}
\end{lemma}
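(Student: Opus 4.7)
My approach is to introduce, for the given real form $C_0$ of $C$, the anti-linear operator $\tau: A \to A$ defined by $\tau(a)(c) = \overline{a(\overline{c})}$, where $\overline{c}$ denotes the conjugate with respect to $C_0$. A direct computation shows $\tau$ is anti-linear and satisfies $\tau^2 = \id_A$, and writing an arbitrary $c \in C$ as $c = x + \mathbf{i} y$ with $x, y \in C_0$ shows $A_0 = \{ a \in A \mid \tau(a) = a \}$. The decomposition $A = A_0 \oplus \mathbf{i} A_0$ then drops out as the $\pm 1$-eigenspace splitting of the anti-linear involution $\tau$: writing $a = \tfrac{1}{2}(a + \tau(a)) + \tfrac{1}{2}(a - \tau(a))$, the first summand is fixed by $\tau$ and the second is negated by $\tau$, the latter being of the form $\mathbf{i} b$ with $b = -\tfrac{\mathbf{i}}{2}(a - \tau(a)) \in A_0$.

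Next I would verify that $A_0$ is an $\mathbb{R}$-subalgebra. Closure under convolution is immediate: for $a, b \in A_0$ and $c \in C_0$, the hypothesis $\Delta(C_0) \subset \mathrm{span}_{\mathbb{R}}\{ c' \otimes c'' \mid c', c'' \in C_0 \}$ lets us write $\Delta(c) = \sum_k r_k (x_k \otimes y_k)$ with $r_k \in \mathbb{R}$ and $x_k, y_k \in C_0$, so $(a \star b)(c) = \sum_k r_k a(x_k) b(y_k) \in \mathbb{R}$. The more delicate point is that the unit $\varepsilon = 1_A$ lies in $A_0$, i.e.\ $\varepsilon(C_0) \subset \mathbb{R}$, since this does not appear explicitly in the stated definition of a real form of a coalgebra. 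I would deduce it by showing that $\varepsilon': c \mapsto \overline{\varepsilon(\overline{c})}$ is itself a $\mathbb{C}$-linear counit for $\Delta$: applying the conjugation on $C$ to the counit identity $\varepsilon(\overline{c}_{(1)}) \overline{c}_{(2)} = \overline{c}$ and using the compatibility $\Delta(\overline{c}) = \overline{c_{(1)}} \otimes \overline{c_{(2)}}$ yields $(\varepsilon' \otimes \id)\Delta = \id_C$, and uniqueness of the counit forces $\varepsilon' = \varepsilon$, whence $\varepsilon(C_0) \subset \mathbb{R}$.

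For the recovery formula \eqref{eq:RF-dual-alg-2}, denote its right-hand side by $C_0'$. The inclusion $C_0 \subset C_0'$ is immediate from the definition of $A_0$. For the reverse, take $c \in C_0'$ and decompose $c = x + \mathbf{i} y$ with $x, y \in C_0$; for every $a \in A_0$ both $a(x)$ and $a(y)$ are real, so $a(c) \in \mathbb{R}$ forces $a(y) = 0$. To conclude $y = 0$ I would establish that $A_0$ separates points of $C_0$: if $y \neq 0$ lies in $C_0$, extend $\{y\}$ to an $\mathbb{R}$-basis $\mathcal{B}$ of $C_0$, which is also a $\mathbb{C}$-basis of $C$ thanks to $C = C_0 \oplus \mathbf{i} C_0$; the $\mathbb{C}$-linear coordinate functional at $y$ takes values in $\{0,1\}$ on $\mathcal{B}$, hence in $\mathbb{R}$ on $C_0$, so it belongs to $A_0$ and distinguishes $y$ from $0$. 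This gives $y = 0$ and $c \in C_0$.

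The main obstacle is the hidden compatibility $\varepsilon(C_0) \subset \mathbb{R}$; once it is established by the counit-uniqueness argument above, the rest of the lemma is routine real-versus-complex bookkeeping organised around the anti-linear involution $\tau$.
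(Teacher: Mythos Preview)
Your proof is correct and takes essentially the same approach as the paper: both introduce the anti-linear involution $\tau(a)(c) = \overline{a(\overline{c})}$ on $A$ and identify $A_0$ as its fixed-point set. The paper is more economical in two spots. First, rather than checking closure of $A_0$ under $\star$ and then separately arguing that $\varepsilon \in A_0$, the paper simply verifies $\tau(a \star b) = \tau(a) \star \tau(b)$ (immediate from $\Delta(\overline{c}) = \overline{c_{(1)}} \otimes \overline{c_{(2)}}$); since $\tau$ is then a bijective ring map, $\tau(\varepsilon) = \varepsilon$ is automatic and your counit-uniqueness detour is not needed. Second, for the recovery formula the paper uses the already-established decomposition $A = A_0 \oplus \mathbf{i} A_0$: once $a(y) = 0$ for all $a \in A_0$, it follows that $a(y) = 0$ for all $a \in A$, hence $y = 0$; this replaces your explicit construction of a coordinate functional.
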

\begin{proof}
  For $a \in A$, we define $\overline{a} \in C$ by $\overline{a}(c) = \overline{a(\overline{c})}$ ($c \in C$), where $\overline{c}$ is the conjugate of $c$ with respect to $C_0$. Then it is easy to check that
  \begin{equation*}
    A_0 = \{ a \in A \mid \overline{a} = a \}.
  \end{equation*}
  Since $\overline{\overline{a}} = a$ and $\overline{a \star b} = \overline{a} \star \overline{b}$ ($a, b \in A$), $A_0$ is a real form. To show \eqref{eq:RF-dual-alg-2}, we denote by $C_1$ the right-hand side of~\eqref{eq:RF-dual-alg-2}. $C_0 \subset C_1$ is trivial. Now let $c \in C_1$ and write it as $c = x + \mathbf{i} y$ ($x, y \in C_0$). Then, for all $a \in A_0$,
  \begin{equation*}
    a(x) \in \mathbb{R}, \quad
    a(c) = a(x) + \mathbf{i} a(y) \in \mathbb{R}.
  \end{equation*}
  This implies that $a(y) = 0$ for all $a \in A_0$. Since $A$ is spanned by $A_0$ over $\mathbb{C}$, we see that $a(y) = 0$ for all $a \in A$. Hence $y = 0$.
\end{proof}

Let $C$ be a coalgebra, and let $A = C^{\ast}$. Given $V \in \mathcal{M}^C_{\fd}$, we define $\Phi(V)$ to be the vector space $V$ endowed with the left $A$-action given by $a \cdot v = \langle a, v_{(1)} \rangle v_{(0)}$ for $a \in A$ and $v \in V$. $V \mapsto \Phi(V)$ defines a $\mathbb{C}$-linear functor
\begin{equation}
  \label{eq:Phi}
  \Phi: \mathcal{M}^C_{\fd} \to {{}_A\mathcal{M}_{\fd}}, \quad V \mapsto \Phi(V),
\end{equation}
which is well-known to be fully faithful (see, {\it e.g.}, \cite[Chapter 2]{MR1786197}). Now let $C_0$ be a real form of $C$, and let $A_0$ be the real form of $A$ given by \eqref{eq:RF-dual-alg-2}. $C_0$ and $A_0$ define real structures for $\mathcal{M}^C_{\fd}$ and ${{}_A\mathcal{M}_{\fd}}$, respectively. Abusing notation, we denote them by the same symbol $\mathcal{J} = (J, i)$. It is easy to check that $\Phi \circ J = J \circ \Phi$ and $\Phi(i) = i$. Hence, in particular,
\begin{equation}
  \label{eq:Phi-preserves-J-sign}
  \sigma_{\mathcal{J}}(V) = \sigma_{\mathcal{J}}(\Phi(V))
\end{equation}
for all simple comodule $V \in \mathcal{M}^C_{\fd}$. Now we give the following characterization of the $\mathcal{J}$-signature:

\begin{theorem}
  \label{thm:RF-cog-sign-1}
  Let $C$ be a coalgebra with real form $C_0$, and let $V \in \mathcal{M}^C$ be a simple comodule with character $t_V$. Then:
  \begin{enumerate}
  \item $V$ is real with respect to $C_0$ if and only if $V$ admits a basis $\{ v_i \}_{i = 1}^n$ such that the matrix corepresentation $(c_{i j})$ with respect to $\{ v_i \}$ fulfills:
    \begin{equation*}
      c_{i j} \in C_0 \text{\quad for all $i, j = 1, \dotsc, n$}.
    \end{equation*}
  \item $V$ is quaternionic with respect to $C_0$ if and only if $V$ does not admit such a basis but $t_V \in C_0$.
  \item $V$ is complex with respect to $C_0$ if and only if $t_V \not \in C_0$.
  \end{enumerate}
\end{theorem}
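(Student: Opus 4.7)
The plan is to reduce the statement to Theorem~\ref{thm:RF-sign-1} via the fully faithful functor $\Phi: \mathcal{M}^C_{\fd} \to \fdMod{A}$ defined in \eqref{eq:Phi}, where $A = C^*$ is the dual algebra. Let $A_0 \subset A$ be the real form of $A$ associated with $C_0$ via Lemma~\ref{lem:RF-dual-alg}. By \eqref{eq:Phi-preserves-J-sign}, the $\mathcal{J}$-signature of the simple comodule $V$ coincides with the $\mathcal{J}$-signature of the simple $A$-module $\Phi(V)$, so the three cases in Theorem~\ref{thm:RF-sign-1} applied to $\Phi(V)$ already yield three mutually exclusive conditions on $\rho$ and $\chi_{\Phi(V)}$. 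The remaining task is to translate those conditions on $\Phi(V)$ into the desired conditions on $C_0$ and $t_V$.

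The key dictionary is as follows. Fix a basis $\{v_i\}_{i=1}^n$ of $V$ and let $(c_{ij})$ be the associated matrix corepresentation, as in \eqref{eq:matrix-corep}. Unpacking the definition of $\Phi$, the matrix representation $\rho: A \to M_n(\mathbb{C})$ of $\Phi(V)$ in the same basis satisfies $\rho(a)_{ij} = \langle a, c_{ij} \rangle$ for every $a \in A$; in particular, $\chi_{\Phi(V)}(a) = \langle a, t_V \rangle$. Therefore, by Lemma~\ref{lem:RF-dual-alg} (applied in the form \eqref{eq:RF-dual-alg-2}), we have the equivalences
\begin{equation*}
  \rho(A_0) \subset M_n(\mathbb{R}) \iff c_{ij} \in C_0 \text{ for all } i,j,
  \qquad
  \chi_{\Phi(V)}(A_0) \subset \mathbb{R} \iff t_V \in C_0.
\end{equation*}

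With this dictionary, the three parts of the theorem follow immediately by combining \eqref{eq:Phi-preserves-J-sign} with the corresponding three parts of Theorem~\ref{thm:RF-sign-1}: part~(1) becomes the existence of a basis whose matrix corepresentation lies in $M_n(C_0)$, part~(2) becomes the nonexistence of such a basis together with $t_V \in C_0$, and part~(3) becomes $t_V \notin C_0$. I anticipate no substantive obstacle; the only point that deserves care is verifying the explicit formula $\rho(a)_{ij} = \langle a, c_{ij}\rangle$ from the definition of the left $A$-action on $\Phi(V)$, and then invoking Lemma~\ref{lem:RF-dual-alg} to dualize the reality conditions back to the coalgebra side. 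Thus the proof is essentially a transport-of-structure argument through $\Phi$.
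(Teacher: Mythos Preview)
Your proposal is correct and follows essentially the same route as the paper: both reduce to Theorem~\ref{thm:RF-sign-1} via the fully faithful functor $\Phi$ of \eqref{eq:Phi}, using \eqref{eq:Phi-preserves-J-sign} to match signatures and Lemma~\ref{lem:RF-dual-alg} to translate the reality conditions $\rho(A_0)\subset M_n(\mathbb{R})$ and $\chi(A_0)\subset\mathbb{R}$ into $c_{ij}\in C_0$ and $t_V\in C_0$. The only cosmetic difference is that the paper writes out the action formula $a\cdot v_j=\sum_i a(c_{ij})v_i$ explicitly before extracting your dictionary $\rho(a)_{ij}=\langle a,c_{ij}\rangle$.
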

\begin{proof}
  Let $A_0$ be the real form of $A := C^{\ast}$ given by~\eqref{eq:RF-dual-alg-1}. To prove the claim, let, in general, $X \in \mathcal{M}^{C}_{\fd}$. Fix a basis $\{ v_i \}_{i = 1}^n$ of $X$ and define $c_{i j}$'s by~\eqref{eq:matrix-corep}. Then the action of $a \in A$ on $\Phi(X)$ is given by
  \begin{equation}
    \label{eq:dual-alg-mat-rep}
    a \cdot v_j = a(v_{j(1)}) v_{j(0)} = \sum_{i = 1}^n a(c_{i j}) v_i \quad (j = 1, \dotsc, n).
  \end{equation}
  Let $\rho: A \to M_n(\mathbb{C})$ be the matrix representation with respect to the basis $\{ v_i \}$ of the left $A$-module $\Phi(X)$. By Lemma~\ref{lem:RF-dual-alg}, we have:
  \begin{equation}
    \label{eq:RCH-cog-1}
    \rho(A_0) \subset M_n(\mathbb{R}) \iff \text{$c_{i j} \in C_0$ for all $i, j = 1, \dotsc, n$}.
  \end{equation}
  Let $t$ and $\chi$ be the characters of $X$ and $\Phi(X)$, respectively. By~\eqref{eq:dual-alg-mat-rep},
  \begin{equation}
    \label{eq:dual-alg-character}
    a(t) = a(c_{1 1}) + \dotsb + a(c_{n n}) = \chi(a) \quad (a \in A).
  \end{equation}
  Hence, again by Lemma~\ref{lem:RF-dual-alg}, we have:
  \begin{equation}
    \label{eq:RCH-cog-2}
    \chi(A_0) \subset \mathbb{R} \iff t \in C_0.
  \end{equation}
  By~\eqref{eq:Phi-preserves-J-sign}, $V$ is real, complex and quaternionic with respect to $C_0$ if and only if $\Phi(V)$ is real, complex and quaternionic with respect to $A_0$, respectively. The proof is done by interpreting Theorem~\ref{thm:RF-sign-1} in terms of $C$ by~\eqref{eq:RCH-cog-1} and~\eqref{eq:RCH-cog-2}.
\end{proof}

\subsection{Real forms of a $*$-coalgebra}
\label{subsec:RF-st-cog}

A {\em $*$-coalgebra} is a coalgebra $C$ endowed with an anti-linear operator $*: C \to C$ such that
\begin{equation*}
  c^{**} = c, \quad (c^*)_{(1)} \otimes (c^*)_{(2)} = (c_{(2)})^* \otimes (c_{(1)})^* \quad (c \in C).
\end{equation*}
Given a real form $C_0$ of a $*$-coalgebra $C$, we define
\begin{equation}
  \label{eq:RF-st-cog-1}
  \varsigma: C \to C, \quad \varsigma(c) = \overline{c^*} \quad (c \in C),
\end{equation}
where $\overline{\phantom{a}}$ is the conjugate with respect to $C_0$. We refer the linear map $\varsigma$ as the {\em anti-coalgebra map associated with $C_0$} since, by \eqref{eq:RF-cog-conj}, it satisfies
\begin{equation}
  \label{eq:RF-st-cog-2}
  \Delta(\varsigma(c)) = \varsigma(c_{(2)}) \otimes \varsigma(c_{(1)})
  \quad \varsigma(\varsigma(c^*)^*) = c
  \quad (c \in C).
\end{equation}
Conversely, if a linear map $\varsigma: C \to C$ satisfies~\eqref{eq:RF-st-cog-1}, then
\begin{equation}
  \label{eq:RF-st-cog-3}
  C_0 = \{ c \in C \mid \varsigma(c^*) = c \}
\end{equation}
is a real form of $C$. We call $C_0$ the {\em real form associated with $\varsigma$}. It is easy to check that \eqref{eq:RF-st-cog-1} and~\eqref{eq:RF-st-cog-3} establish a bijection between real forms of $C$ and linear maps satisfying~\eqref{eq:RF-st-cog-2}.

Note that the dual algebra $A = C^{\ast}$ is a $*$-algebra by
\begin{equation*}
  \langle a^*, c \rangle = \overline{\langle a, c^* \rangle} \quad (a \in A, c \in C).
\end{equation*}
If $\varsigma: C \to C$ is a linear map satisfying~\eqref{eq:RF-st-cog-1}, then
\begin{equation}
  \label{eq:RF-dual-st-alg}
  S: A \to A, \quad a \mapsto a \circ \varsigma \quad (a \in A)
\end{equation}
is a linear map satisfying~\eqref{eq:RF-st-alg-2}. The above constructions for the $*$-coalgebra $C$ are summarized in the following commutative diagram:
\begin{equation*}
  \begin{CD}
    \{ \text{real forms of $C$} \}
    @>{\eqref{eq:RF-st-cog-1}}>{}>
    \{ \text{$\varsigma \in \End_{\mathbb{C}}(C)$ satisfying \eqref{eq:RF-st-cog-2}} \} \\
    @V{\eqref{eq:RF-dual-alg-1}}V{}V @V{}V{\eqref{eq:RF-dual-st-alg}}V \\
    \{ \text{real forms of $A = C^{\ast}$} \}
    @>{}>{\eqref{eq:RF-st-alg-1}}>
    \{ \text{$S \in \End_{\mathbb{C}}(A)$ satisfying \eqref{eq:RF-st-alg-2}} \}.
  \end{CD}
\end{equation*}

By a {\em $*$-corepresentation} of $C$, we mean a finite-dimensional Hilbert space $V$ endowed with a right $C$-comodule structure such that
\begin{equation*}
  \langle v_{(0)} | w \rangle (v_{(1)})^* = \langle v | w_{(0)} \rangle w_{(1)}
  \quad (v, w \in V).
\end{equation*}
We denote by $\fdCorep(C)$ the category whose objects are $*$-corepresentations of $C$ and whose morphisms are maps of right $C$-comodules.

\begin{proposition}
  $\fdCorep(C)$ is a $C^*$-category.
\end{proposition}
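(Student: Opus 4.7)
The plan is to realize $\fdCorep(C)$ as a category inheriting its $C^*$-structure from the ambient $C^*$-category $\mathcal{H}$ of Hilbert spaces via the forgetful functor $\fdCorep(C) \to \mathcal{H}$. With morphisms being $C$-colinear maps between finite-dimensional Hilbert spaces (automatically bounded), $\fdCorep(C)$ is manifestly a $\mathbb{C}$-linear category, with $\Hom_{\fdCorep(C)}(V,W)$ a linear subspace of $\mathcal{B}(V,W)$. The only nontrivial step is to define the $*$-operation; for that I must verify that the Hilbert-space adjoint of a comodule morphism is again a comodule morphism. Once this is established, all remaining $C^*$-axioms follow by inheritance.

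The key computation is that for $f : V \to W$ in $\fdCorep(C)$, the adjoint $f^* : W \to V$ is colinear, i.e.\ $f^*(w)_{(0)} \otimes f^*(w)_{(1)} = f^*(w_{(0)}) \otimes w_{(1)}$ in $V \otimes C$ for every $w \in W$. My approach is, for fixed $w \in W$, to pair both sides against an arbitrary $v \in V$ in the first tensor factor while applying the involution of $C$ in the second, and then successively to apply: the $*$-corepresentation axiom on $V$ (to $f^*(w)$ and $v$), the adjointness $\langle f^*(-) | - \rangle = \langle - | f(-) \rangle$, colinearity of $f$, and the $*$-corepresentation axiom on $W$ (to $w$ and $f(v)$). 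This yields
\begin{align*}
  \langle f^*(w)_{(0)} | v \rangle \, (f^*(w)_{(1)})^*
  &= \langle f^*(w) | v_{(0)} \rangle \, v_{(1)}
   = \langle w | f(v)_{(0)} \rangle \, f(v)_{(1)} \\
  &= \langle w_{(0)} | f(v) \rangle \, (w_{(1)})^*
   = \langle f^*(w_{(0)}) | v \rangle \, (w_{(1)})^*.
\end{align*}
Non-degeneracy of $\langle - | - \rangle$ on $V$ (to cancel the test vector $v$) together with involutivity of $*$ on $C$ then yield the required identity in $V \otimes C$.

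With $*$ so defined, everything else is inheritance from $\mathcal{B}(V,W)$: each $\Hom$-space is a finite-dimensional (hence Banach) subspace of $\mathcal{B}(V,W)$ with submultiplicative norm and the $C^*$-identity $\|f^*f\| = \|f\|^2$. For each object $V$, $\End_{\fdCorep(C)}(V)$ is a unital $*$-subalgebra of the $C^*$-algebra $\mathcal{B}(V)$, hence itself a $C^*$-algebra, and positivity of $f^*f$ inside it follows from the continuous functional calculus: $(f^*f)^{1/2}$ lies in the $C^*$-subalgebra of $\End_{\fdCorep(C)}(V)$ generated by $f^*f$ and $\id_V$, exhibiting $f^*f$ as a square in the subalgebra. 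The main obstacle is the colinearity of $f^*$, which is exactly what the $*$-corepresentation axiom was designed to provide; once that is settled, the rest is bookkeeping.
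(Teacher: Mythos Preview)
The paper states this proposition without proof (as it does the analogous proposition for $\Rep(A)$ and $\fdRep(A)$ earlier), treating it as routine. Your proposal is a correct and complete fleshing-out of the argument: the only substantive point is that the Hilbert-space adjoint of a comodule map is again a comodule map, and your chain of equalities using the $*$-corepresentation axiom on $V$, adjointness, colinearity of $f$, and the $*$-corepresentation axiom on $W$ establishes exactly this; the separation step at the end is valid since, expanding in an orthonormal basis of $V$ and applying the involution on $C$, the identity $\langle f^*(w)_{(0)} \mid e_l \rangle\,(f^*(w)_{(1)})^* = \langle f^*(w_{(0)}) \mid e_l \rangle\,(w_{(1)})^*$ for each basis vector $e_l$ recovers the $C$-components of both tensors. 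Your handling of positivity is also correct and worth making explicit, since condition~(3) in the paper's definition requires $f^*f$ to be positive in $\End_{\fdCorep(C)}(V)$ rather than merely in $\mathcal{B}(V)$; the functional-calculus argument you give (that $(f^*f)^{1/2}$ already lies in the $C^*$-subalgebra generated by $f^*f$ and $\id_V$) is precisely what is needed.
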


Now let $C_0$ be a real form of $C$, and let $\varsigma: C \to C$ be the anti-coalgebra map associated with $C_0$. Suppose moreover that there exists an invertible positive element $\gamma$ of the dual $*$-algebra $A = C^*$ such that
\begin{equation*}
  \gamma \circ \varsigma = \gamma^{-1}, \quad
  \varsigma^2(c) = \gamma(c_{(1)}) c_{(2)} \gamma^{-1}(c_{(3)}) \quad (c \in C).
\end{equation*}
Given $X \in \fdCorep(C)$, we define $D(X)$ to be the dual space $X^\vee$ endowed with the right $C$-comodule structure determined by
\begin{equation*}
  \langle \lambda_{(0)}, x \rangle \lambda_{(1)}
  = \langle \lambda, x_{(0)} \rangle \varsigma(x_{(1)})
  \quad (\lambda \in X^\vee, x \in X)
\end{equation*}
and the inner product given by
\begin{equation*}
  \langle \phi_x | \phi_y \rangle_{D(X)}
  = \langle y | x_{(0)} \rangle \gamma(x_{(1)})
  \quad (x, y \in X).
\end{equation*}
We also define $J(X)$ to be the right $C$-comodule $\overline{X}$ ({\it i.e.}, the conjugate comodule with respect to the real form $C_0$) endowed with the inner product given by
\begin{equation*}
  \langle \overline{x} | \overline{y} \rangle_{J(X)}
  = \langle y | x_{(0)} \rangle \gamma(x_{(1)})
  \quad (x, y \in X).
\end{equation*}
$D(X)$ and $J(X)$ are $*$-corepresentations. Moreover, $D$ and $J$ are (contravariant) endo-$*$-functors on $\fdCorep(C)$. We now define natural isomorphisms $i: \id \to J J$ and $\eta: \id \to D D$ by \eqref{eq:RF-conj-rep-4} and
\begin{equation*}
  \langle \eta_X(x), f \rangle = \langle f, x_{(0)} \rangle \gamma(x_{(1)})
  \quad (X \in \fdCorep(C), x \in X, f \in X^\vee),
\end{equation*}
respectively. Then:

\begin{lemma}
  $\mathcal{D} = (D, \eta)$ and $\mathcal{J} = (J, i)$ are a $*$-compatible dual structure and a $*$-compatible real structure for $\fdCorep(C)$, respectively. Moreover, the dual structures $\mathbb{D}(\mathcal{J})$ and $\mathcal{D}$ are unitary equivalent. Hence, by Theorem~\ref{thm:FS-thm-C-st}, we obtain
  \begin{equation}
    \label{eq:FS-thm-star-coalg}
    \nu_{\mathcal{D}}(V) = \sigma_{\mathcal{J}}(V).
  \end{equation}
  for all irreducible $*$-corepresentation $V \in \fdCorep(C)$.
\end{lemma}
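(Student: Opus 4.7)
The plan is to mirror the algebraic construction of Section~\ref{subsec:RF-st-alg} (in particular Lemmas~\ref{lem:RF-st-alg-dual-str} and~\ref{lem:RF-real-dual-equiv}) in the coalgebra setting, with $\varsigma \in \End_{\mathbb{C}}(C)$ playing the role of $S$ and the positive element $\gamma \in A = C^*$ playing the role of $g$. A tempting shortcut would be to transport everything along the fully faithful functor $\Phi$ of \eqref{eq:Phi}, but this does not automatically produce $*$-compatibility of the inner products, so I would carry out the verifications directly in $\fdCorep(C)$. Throughout, the defining identities \eqref{eq:RF-st-cog-2} of $\varsigma$ and the relations $\gamma \circ \varsigma = \gamma^{-1}$, $\varsigma^2(c) = \gamma(c_{(1)}) c_{(2)} \gamma^{-1}(c_{(3)})$ play the roles previously played by the analogous identities for $S$ and $g$.

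The first step is to check that $D$ and $J$ really take $*$-corepresentations to $*$-corepresentations. For $J(X) = \overline{X}$, the coaction $\overline{v} \mapsto \overline{v_{(0)}} \otimes \overline{v_{(1)}}$ combined with the inner product $\langle \overline{x} \mid \overline{y}\rangle_{J(X)} = \langle y \mid x_{(0)}\rangle \gamma(x_{(1)})$ yields the $*$-corepresentation identity after unpacking $\overline{c} = \varsigma(c^*)$ and using the $*$-corepresentation axiom on $X$; for $D(X) = X^\vee$, the same check reduces to the compatibility of $\varsigma$ with the comultiplication in \eqref{eq:RF-st-cog-2}. Naturality of $i$ and $\eta$ is formal, and unitarity $i^* = i^{-1}$, $\eta^* = \eta^{-1}$ is a short computation using $c^{**} = c$, $\varsigma(\varsigma(c^*)^*) = c$, and the identity $\gamma(c_{(1)}) \gamma(\varsigma(c_{(2)})) = \varepsilon(c)$ derived from $\gamma \circ \varsigma = \gamma^{-1}$. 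Functoriality of $D$ and $J$ and their compatibility with $*$ are immediate from the definitions.

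Next I would construct the unitary equivalence $\varphi \colon \mathbb{D}(\mathcal{J}) \to \mathcal{D}$, imitating Lemma~\ref{lem:RF-real-dual-equiv}: take $\varphi_X \colon J(X) \to D(X)$ to be the Riesz map $\overline{x} \mapsto \phi_x$ of Lemma~\ref{lem:Riesz-1}. That $\varphi_X$ is an isomorphism of right $C$-comodules is the coalgebraic counterpart of \eqref{eq:Riesz-A-linear} and follows once we translate $\varsigma(c) = \overline{c^*}$ through the $*$-corepresentation condition on $X$. Preservation of the inner product is immediate, since both $\langle \overline{x}\mid\overline{y}\rangle_{J(X)}$ and $\langle \phi_x \mid \phi_y\rangle_{D(X)}$ are defined by the common formula $\langle y \mid x_{(0)}\rangle \gamma(x_{(1)})$. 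The commuting-square condition \eqref{eq:RF-dual-real-equiv-1} with $\mathbb{D}(\mathcal{J}) = (J\circ *, i)$ is then verified by pairing both sides against an arbitrary $\overline{y}$, exactly as in the proof of Lemma~\ref{lem:RF-real-dual-equiv}.

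The main obstacle is the bookkeeping required to confirm that the prescribed inner products make $D(X)$ and $J(X)$ into bona fide $*$-corepresentations; everything else is a direct dualization of the algebra case, and in particular neither the proof of unitary equivalence nor the final step adds any new idea. Once $\mathbb{D}(\mathcal{J}) \sim \mathcal{D}$ is established, the identity \eqref{eq:FS-thm-star-coalg} is an immediate application of Theorem~\ref{thm:FS-thm-C-st} to each simple object $V \in \fdCorep(C)$.
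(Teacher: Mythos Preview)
Your proposal is correct, but the paper takes precisely the shortcut you dismissed. You worry that transporting along $\Phi$ ``does not automatically produce $*$-compatibility of the inner products,'' and so you redo all of Lemmas~\ref{lem:RF-st-alg-dual-str} and~\ref{lem:RF-real-dual-equiv} by hand in $\fdCorep(C)$. The paper instead observes that if $X \in \fdCorep(C)$ then $\Phi(X)$, with the \emph{same} inner product, is a $*$-representation of $A = C^{*}$; hence \eqref{eq:Phi} lifts to a fully faithful $*$-functor $\Phi \colon \fdCorep(C) \to \Rep(A)$. One then checks the bare identities $\Phi \circ D = D \circ \Phi$, $\Phi \circ J = J \circ \Phi$, $\Phi(\eta) = \eta$, $\Phi(i) = i$ (with $g = \gamma$ acting via $g \cdot x = \gamma(x_{(1)}) x_{(0)}$, so that for instance $\langle y \mid g x \rangle = \langle y \mid x_{(0)}\rangle \gamma(x_{(1)})$ matches the coalgebra-side inner product on $D(X)$). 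After that, $*$-compatibility of $\mathcal{D}$ and $\mathcal{J}$ and the unitary equivalence $\mathbb{D}(\mathcal{J}) \sim \mathcal{D}$ are inherited verbatim from \S\ref{subsec:RF-st-alg} by restriction along the full $*$-embedding.

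What each approach buys: yours is self-contained on the coalgebra side and makes the role of $\varsigma$ and $\gamma$ explicit, at the cost of repeating computations already done for $S$ and $g$. The paper's route is shorter and conceptually cleaner---it shows the coalgebra lemma is literally the algebra lemma viewed through $\Phi$---but it does require the small verification that $\Phi$ intertwines the inner products on $D(X)$ and $J(X)$, which is exactly the point you flagged. That verification is a two-line check rather than an obstruction, so your caution was a little overcautious.
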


To verify this lemma, observe that if $X \in \fdCorep(C)$, then $\Phi(X)$ is a $*$-\hspace{0pt}representation of the dual $*$-algebra $A$ with respect to the same inner product. Hence \eqref{eq:Phi} induces a fully faithful $*$-functor
\begin{equation}
  \label{eq:Phi-rep}
  \Phi: \fdCorep(C) \to \Rep(A).
\end{equation}
Let $A_0$ be the real form of $A$ given by \eqref{eq:RF-dual-alg-1}, let $S$ be the anti-algebra map associated with $A_0$, and let $g = \gamma$. By the same way as in \S\ref{subsec:RF-st-alg}, $A_0$, $S$ and $g$ define a real structure and a dual structure for $\Rep(A)$. Abusing notation, we denote them by $\mathcal{J} = (J, i)$ and $\mathcal{D} = (D, \eta)$, respectively. Then one easily checks that
\begin{equation*}
  \Phi \circ J = J \circ \Phi, \quad \Phi(i) = i, \quad \Phi \circ D = D \circ \Phi, \quad \Phi(\eta) = \eta.
\end{equation*}
Now the lemma follows from the results of \S~\ref{subsec:RF-st-alg} by regarding $\fdCorep(C)$ as a full subcategory of $\Rep(A)$ via \eqref{eq:Phi-rep}.

\subsection{Frobenius-Schur theorem for compact coalgebras}

A {\em coseparability idempotent} of a coalgebra $C$ is a bilinear map $E: C \times C \to \mathbb{C}$ such that
\begin{equation*}
  E(c_{(1)}, c_{(2)}) = \varepsilon(c), \quad
  c_{(1)} \cdot E(c_{(2)}, d) = E(c, d_{(1)}) \cdot d_{(2)}
  \quad (c, d \in C).
\end{equation*}
If $C$ is finite-dimensional, then a coseparability idempotent of $C$ is nothing but a separability idempotent of the dual algebra $A = C^*$.

Let $M_n^c(\mathbb{C})$ be the matrix $*$-coalgebra of degree $n$, {\it i.e.}, the $\mathbb{C}$-vector space with basis $\{ e_{i j} \}_{i, j = 1, \dotsc, n}$ endowed with the $*$-coalgebra structure determined by
\begin{equation*}
  \Delta(e_{i j}) = \sum_{k = 1}^n e_{i k} \otimes e_{k j},
  \quad \varepsilon(e_{i j}) = \delta_{i j},
  \quad e_{i j}^* = e_{j i}
  \quad (i, j = 1, \dotsc, n).
\end{equation*}
Following \cite[\S2]{MR1382806}, we say that a $*$-coalgebra $C$ is {\em compact} if $C = \sum_{X} C(X)$, where $X$ runs over all $*$-corepresentations of $C$. This class of $*$-coalgebras is characterized as follows (see also \cite[Lemma~2.1]{MR1382806}):

\begin{proposition}
  \label{prop:compact-coalg}
  For a $*$-coalgebra $C$, the following are equivalent:
  \begin{enumerate}
  \item $C$ is compact.
  \item $C = \sum_V C(V)$, where $V$ runs over all irreducible $*$-corepresentations.
  \item $C$ is isomorphic to a direct sum of matrix $*$-coalgebras.
  \item $C$ has a coseparability idempotent $E: C \times C \to \mathbb{C}$ such that
    \begin{equation*}
      E(c^*, d^*) = \overline{E(d, c)}, \quad E(c^*, c) > 0
      \quad (c, d \in C, c \ne 0).
    \end{equation*}
  \item Any finite-dimensional right $C$-comodule admits an inner product making it into a $*$-corepresentation of $C$.
  \end{enumerate}
\end{proposition}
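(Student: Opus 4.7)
The plan is to close the cycle $(3) \Rightarrow (4) \Rightarrow (5) \Rightarrow (1) \Rightarrow (2) \Rightarrow (3)$. The first three of these implications are short verifications; the only substantial work lies in $(2) \Rightarrow (3)$.

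For $(3) \Rightarrow (4)$, I would decompose $C = \bigoplus_\alpha M_{n_\alpha}^c(\mathbb{C})$ and, on the $\alpha$-th component with matrix basis $\{e_{ij}^\alpha\}$, set $E(e_{ij}^\alpha, e_{kl}^\alpha) = n_\alpha^{-1}\delta_{il}\delta_{jk}$, with $E(c, d) = 0$ whenever $c, d$ lie in distinct components. A direct calculation confirms the coseparability identity, the relation $E(c^*, d^*) = \overline{E(d, c)}$, and the positivity $E(c^*, c) = \sum_\alpha n_\alpha^{-1}\sum_{i,j}|c_{ij}^\alpha|^2 > 0$ for $c \neq 0$. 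For $(4) \Rightarrow (5)$, let $V \in \mathcal{M}^C_{\fd}$. Its coefficient space $C(V)$ is a finite-dimensional sub-$*$-coalgebra, and $V$ is a comodule over $C(V)$, equivalently a module over the finite-dimensional $*$-algebra $A := C(V)^*$ via $\Phi$. The restriction $E|_{C(V) \times C(V)}$ corresponds to a separability idempotent $e \in A \otimes A$. I would use the positive inner product $(c, d) \mapsto E(c^*, d)$ on $C(V)$ to pick an orthonormal basis $\{c_j\}$, set $a_j(c) := E(c_j^*, c)$, and check that $e = \sum_j a_j^* \otimes a_j$; the implication $(2) \Rightarrow (3)$ of Remark~\ref{rem:fd-C-st-characterization} then furnishes an inner product on $V$ making it a $*$-representation of $A$, which transports through $\Phi$ to a $*$-corepresentation of $C$.

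For $(5) \Rightarrow (1)$, every element of $C$ lies in a finite-dimensional subcoalgebra by the Fundamental Theorem of Coalgebras, hence in $C(V)$ for some $V \in \mathcal{M}^C_{\fd}$, which (5) turns into a $*$-corepresentation. For $(1) \Rightarrow (2)$, every object of the $C^*$-category $\fdCorep(C)$ splits as a finite orthogonal direct sum of irreducibles—its endomorphism ring is a finite-dimensional $C^*$-algebra and therefore admits a complete family of orthogonal central projections—and coefficient spaces satisfy $C(X \oplus Y) = C(X) + C(Y)$.

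The main obstacle will be $(2) \Rightarrow (3)$. For each irreducible $V \in \fdCorep(C)$ of dimension $n$, I would fix an orthonormal basis $\{v_i\}$; the matrix coefficients $c_{ij}$ satisfy $\Delta(c_{ij}) = \sum_k c_{ik} \otimes c_{kj}$, $\varepsilon(c_{ij}) = \delta_{ij}$, and, by $*$-corepresentationality, $c_{ij}^* = c_{ji}$, producing a surjective $*$-coalgebra map $\pi_V \colon M_n^c(\mathbb{C}) \to C(V)$. Since $M_n^c(\mathbb{C})$ is simple as a coalgebra (its dual is the simple algebra $M_n(\mathbb{C})$), $\pi_V$ is an isomorphism, so each $C(V_\lambda)$ is a simple sub-$*$-coalgebra. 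The genuinely delicate step is to show that the sum $\sum_\lambda C(V_\lambda)$ is direct; for this I would argue by a dimension count on every finite partial sum $C' := \sum_{i=1}^r C(V_{\lambda_i})$. The dual $(C')^*$ is a finite-dimensional $*$-algebra equipped with the faithful $*$-representation $\bigoplus_i \Phi(V_{\lambda_i})$, and hence is a $C^*$-algebra by Remark~\ref{rem:fd-C-st-characterization}; Artin--Wedderburn, combined with the pairwise non-isomorphism of the $\Phi(V_{\lambda_i})$ (which follows from the full faithfulness of $\Phi$), yields $\dim (C')^* \geq \sum_i n_{\lambda_i}^2$, matching the obvious upper bound $\dim C' \leq \sum_i \dim C(V_{\lambda_i}) = \sum_i n_{\lambda_i}^2$. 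Equality forces directness, so $C = \bigoplus_\lambda C(V_\lambda) \cong \bigoplus_\lambda M_{\dim V_\lambda}^c(\mathbb{C})$ as $*$-coalgebras.
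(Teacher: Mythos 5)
Your cycle consists of exactly the same five implications the paper proves, only listed in a different order; $(3)\Rightarrow(4)$ and $(5)\Rightarrow(1)$ coincide with the paper's arguments (and your normalization $n_\alpha^{-1}$ in $(3)\Rightarrow(4)$ is in fact needed for the counit identity $E(c_{(1)},c_{(2)})=\varepsilon(c)$). Two of your steps, however, have genuine gaps. In $(1)\Rightarrow(2)$ you justify complete reducibility of $X\in\fdCorep(C)$ by saying that $\End(X)$ is a finite-dimensional $C^*$-algebra and hence has a complete family of orthogonal (central) projections. That is not enough: projections in $\End(X)$ only split off summands that are already direct summands, and an indecomposable non-irreducible comodule can have $\End=\mathbb{C}$, which is a perfectly good $C^*$-algebra. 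What is needed -- and what the paper uses -- is that for a subcomodule $V\subseteq X$ the orthogonal complement $V^\perp$ is again a subcomodule (equivalently, the orthogonal projection onto $V$ is a morphism of comodules); this is precisely where the $*$-corepresentation axiom enters, and with it induction on dimension gives the decomposition into irreducibles. In $(4)\Rightarrow(5)$ you assert that $C(V)$ is a sub-$*$-coalgebra for an arbitrary $V\in\mathcal{M}^C_{\fd}$. At that stage this is not known (it holds only a posteriori, once compactness is established; for a general $*$-coalgebra the $*$-operation may move $C(V)$ off itself), so the passage to the $*$-algebra $C(V)^*$ and Remark~\ref{rem:fd-C-st-characterization} is unjustified as written. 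It is easily repaired by working with $B=C(V)+C(V)^{*}$, which is a finite-dimensional sub-$*$-coalgebra; alternatively, the paper avoids the detour altogether by putting the inner product directly on $V$, namely $\langle x\mid y\rangle=\langle x_{(0)}\mid y_{(0)}\rangle_0\,E(x_{(1)}^{*},y_{(1)})$ for an auxiliary inner product $\langle\,\mid\,\rangle_0$.

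Your $(2)\Rightarrow(3)$ is a genuinely different route: the paper simply invokes the standard fact that the coefficient coalgebras of pairwise non-isomorphic simple comodules sum directly, whereas your dimension count (the dual of a finite partial sum is a finite-dimensional $*$-algebra with a faithful $*$-representation, hence a $C^*$-algebra, hence Wedderburn gives $\dim\ge\sum_i n_{\lambda_i}^2$) is self-contained and fits well with the tools of \S5. One caveat: the inference ``$M_n^c(\mathbb{C})$ is simple as a coalgebra, therefore the surjection $M_n^c(\mathbb{C})\to C(V)$ is an isomorphism'' is invalid -- simplicity means no proper nonzero subcoalgebras, but $M_n^c(\mathbb{C})$ has many proper quotient coalgebras, dual to proper unital subalgebras of $M_n(\mathbb{C})$ such as the upper-triangular matrices, so surjectivity does not force injectivity. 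Fortunately your count only uses the trivial bound $\dim C(V_{\lambda_i})\le n_{\lambda_i}^2$, and the resulting equality retroactively forces each of these surjections to be an isomorphism; either phrase it that way, or justify the isomorphism by the classical linear independence of matrix coefficients of an irreducible corepresentation (Burnside/Jacobson density applied to the simple faithful $C(V)^*$-module $V$).
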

\begin{proof}
  Let $\{ V_{\lambda} \}_{\lambda \in \Lambda}$ be a complete set of representatives of isomorphism classes of irreducible $*$-corepresentations of $C$. Note that, by the fundamental theorem of coalgebras, all $V_{\lambda}$'s are finite-dimensional. By considering the associated matrix corepresentation with respect to an orthonormal basis of $V_{\lambda}$, we see that $C(V_{\lambda})$ is a $*$-subcoalgebra of $C$ isomorphic such that
  \begin{equation*}
    C(V_{\lambda}) \cong M_{n_{\lambda}}(\mathbb{C}),
    \text{\quad where $n_{\lambda} := \dim_{\mathbb{C}}(V_{\lambda})$}.
  \end{equation*}
  \noindent $(1) \Rightarrow (2)$. Let, in general, $X \in \fdCorep(C)$. If $V \subset X$ is a subcomodule, then its orthogonal complement $V^{\perp}$ is also a subcomodule and $X = V \oplus V^{\perp}$. This shows that $X$ is isomorphic to a direct sum of $V_{\lambda}$'s. Hence $C(X) \subset \sum_{\lambda \in \Lambda} C(V_{\lambda})$. By the definition of a compact $*$-coalgebra,
  \begin{equation*}
    C = \sum_{X} C(X) \subset \sum_{\lambda \in \Lambda} C(V_{\lambda}) \subset C,
  \end{equation*}
  where the first sum is taken over all $X \in \fdCorep(C)$.

  \vspace{.5\baselineskip}
  \noindent $(2) \Rightarrow (3)$. By the assumption, $C = \sum_{\lambda \in \Lambda} C(V_{\lambda})$. Since $V_{\lambda}$'s are simple and mutually non-isomorphic, the sum is in fact a direct sum. Hence we obtain an isomorphism of $*$-coalgebras $C = \bigoplus_{\lambda \in \Lambda} C(V_{\lambda}) \cong \bigoplus_{\lambda \in \Lambda} M_{n_{\lambda}}^c(\mathbb{C})$.

  \vspace{.5\baselineskip}
  \noindent $(3) \Rightarrow (4)$. Fix an isomorphism $C \cong \bigoplus_{a \in A} M_{n_{a}}^c(\mathbb{C})$ of $*$-coalgebras. Let $e_{i j}^{(a)} \in C$ be the element corresponding to the element $e_{i j}$ of the $a$-th component $M_{n_{a}}^c(\mathbb{C})$ via the isomorphism. Then the bilinear map $E: C \times C \to \mathbb{C}$ determined by
  \begin{equation*}
    E(e_{i j}^{(a)}, e_{k l}^{(b)}) = \delta_{a b} \delta_{i l} \delta_{j k}
    \quad (a, b \in A; i, j = 1, \dotsc, n_{a}; k, l = 1, \dotsc, n_{b})
  \end{equation*}
  is a coseparability idempotent with the desired properties.

  \vspace{.5\baselineskip}
  \noindent $(4) \Rightarrow (5)$. Let $E$ be such a coseparability idempotent of $C$. Given $X \in \mathcal{M}_{\fd}^C$, we fix an inner product $\langle|\rangle_0$ on $X$ and then define
  \begin{equation*}
    \langle x | y \rangle = \langle x_{(0)} | y_{(0)} \rangle_0 \cdot E(x_{(1)}^*, y_{(1)})
    \quad (x, y \in X).
  \end{equation*}
  It is straightforward to verify~\eqref{eq:FS-thm-star-coalg}. By $E(c^*, d^*) = \overline{E(d, c)}$, $\langle | \rangle$ is Hermitian. To show that $\langle | \rangle$ is positive definite, fix an orthonormal basis $\{ e_i \}_{i = 1}^n$ of $X$ with respect to $\langle | \rangle_0$ and let $(c_{i j})$ be the matrix corepresentation with respect to $\{ e_i \}$. Now let $j \in \{ 1, \dotsc, n \}$. By using the counit, we see that $c_{i j} \ne 0$ for some $i$. Hence:
  \begin{equation*}
    \langle e_j | e_j \rangle
    = \sum_{i, k = 1}^n \langle e_i | e_k \rangle E(c_{i j}^*, c_{k j})
    = \sum_{i = 1}^n E(c_{i j}^*, c_{i j}) > 0.
  \end{equation*}

  \noindent $(5) \Rightarrow (1)$. The fundamental theorem of coalgebras asserts that $C = \sum_X C(X)$, where the sum is taken over all $X \in \mathcal{M}^C_{\fd}$. By the assumption, we may assume that all $X$'s are $*$-corepresentation. Thus $C$ is compact.
\end{proof}

By Remark~\ref{rem:fd-C-st-characterization} and the above proposition, a finite-dimensional $*$-coalgebra is compact if and only if its dual $*$-algebra is a $C^*$-algebra. Note that a compact $*$-colagebra is not necessarily finite-dimensional while an algebra over $\mathbb{C}$ having a separability idempotent is always finite-dimensional.

The following proposition can be considered as the dual of Proposition~\ref{prop:BNS}:

\begin{proposition}
  \label{prop:BNS-cog}
  Suppose that a $*$-coalgebra $C$ is compact. Then, for every linear map $\varsigma: C \to C$ satisfying~\eqref{eq:RF-st-cog-2}, there uniquely exists an invertible positive element $\gamma \in C^{\ast}$ satisfying the following conditions:
  \begin{enumerate}
  \item[(1)] $\varsigma^2(c) = \gamma(c_{(1)}) c_{(2)} \gamma^{-1}(c_{(3)})$ for all $c \in C$.
  \item[(2)] $\gamma(t) = \gamma^{-1}(t)$ if $t$ is the character of some simple right $C$-comodule.
  \end{enumerate}
  Moreover, such an element $\gamma$ satisfies the condition
  \begin{enumerate}
  \item[(3)] $\gamma \circ \varsigma = \gamma^{-1}$.
  \end{enumerate}
\end{proposition}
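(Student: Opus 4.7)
The plan is to reduce to Proposition~\ref{prop:BNS} by exploiting the block decomposition of a compact $*$-coalgebra. First, by Proposition~\ref{prop:compact-coalg}, write $C = \bigoplus_{\lambda \in \Lambda} C_{\lambda}$ as $*$-coalgebras, where $C_{\lambda} = C(V_{\lambda}) \cong M_{n_{\lambda}}^c(\mathbb{C})$ is the simple matrix $*$-subcoalgebra attached to a complete set $\{V_{\lambda}\}_{\lambda \in \Lambda}$ of representatives of irreducible $*$-corepresentations. Each $C_{\lambda}$ is finite-dimensional, simple, and closed under $*$.

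Next, analyze how $\varsigma$ permutes the blocks. Since $\varsigma$ is bijective and an anti-coalgebra map, it sends each simple subcoalgebra to another one. From~\eqref{eq:RF-st-cog-2} one deduces the identity $\varsigma \circ * = * \circ \varsigma^{-1}$, equivalently $\varsigma^{-1} = * \circ \varsigma \circ *$. Combined with the $*$-closedness of each $C_{\lambda}$, this forces $\varsigma(C_{\lambda}) = C_{\mu}$ to imply $\varsigma(C_{\mu}) = C_{\lambda}$. Thus the orbits of $\varsigma$ on $\{C_{\lambda}\}$ have length $1$ or $2$, and grouping blocks by orbit gives a decomposition $C = \bigoplus_{O} D_O$ in which each $D_O$ is a finite-dimensional $*$-subcoalgebra invariant under $\varsigma$ (either a single fixed $C_{\lambda}$, or a swapped pair $C_{\lambda} \oplus C_{\mu}$).

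For each orbit $O$, the dual $A_O := D_O^{\ast}$ is a finite-dimensional $C^*$-algebra (a product of matrix algebras), and $S_O \colon A_O \to A_O$, $S_O(a) = a \circ \varsigma|_{D_O}$, is a linear map satisfying~\eqref{eq:RF-st-alg-2}. Apply Proposition~\ref{prop:BNS} to $(A_O, S_O)$ to obtain the unique invertible positive $g_O \in A_O$ with $S_O^2(a) = g_O a g_O^{-1}$, $\chi_V(g_O) = \chi_V(g_O^{-1}) > 0$ for every simple $A_O$-module $V$, and $S_O(g_O) = g_O^{-1}$. Since the coalgebra decomposition $C = \bigoplus_{O} D_O$ yields an algebra isomorphism $C^{\ast} \cong \prod_O A_O$ (under convolution), the family $(g_O)_O$ assembles into an element $\gamma \in C^{\ast}$ that is invertible (with $\gamma^{-1}$ corresponding to $(g_O^{-1})_O$) and positive. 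Condition~(1) of Proposition~\ref{prop:BNS-cog} follows block by block, since each $D_O$ is a subcoalgebra so $c \in D_O$ forces $c_{(1)}, c_{(2)}, c_{(3)} \in D_O$; condition~(2) follows because the character $t_{V_{\lambda}}$ lies in $C_{\lambda} \subset D_O$ and by~\eqref{eq:dual-alg-character} $\gamma(t_{V_{\lambda}}) = \chi_{\Phi(V_{\lambda})}(g_O)$; condition~(3) is the block-wise statement $S_O(g_O) = g_O^{-1}$. Uniqueness of $\gamma$ is inherited from the uniqueness in Proposition~\ref{prop:BNS} applied in each $A_O$.

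The point that needs the most care is the block-permutation step, i.e.\ verifying from~\eqref{eq:RF-st-cog-2} that $\varsigma$ acts as an involution on the set of simple $*$-subcoalgebras and, in particular, that the orbits can be collected into finite-dimensional $\varsigma$-invariant pieces. Once that structural fact is established, the remainder of the argument is routine transfer between $C$ and the product algebra $C^{\ast} = \prod_O A_O$.
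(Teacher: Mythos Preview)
Your argument is correct, but it is not the paper's route. The paper does \emph{not} reduce Proposition~\ref{prop:BNS-cog} to Proposition~\ref{prop:BNS}. Instead it re-runs the lifting construction of \S\ref{subsec:D0-lift-criterion} directly: taking a complete set $\{V_i\}$ of irreducible $*$-corepresentations, it applies that machinery to the family $\{X_i = \Phi(V_i)\}$ inside $\fdRep(C^{\ast})$, obtaining for each $i$ a positive invertible operator $g_i$ on $X_i$ normalized by $\Trace(g_i)=\Trace(g_i^{-1})$ (Lemma~\ref{lem:D0-lift-g}). The element $\gamma$ is then defined simple-block by simple-block via $\gamma(v_{(1)})\,v_{(0)} = g_i(v)$ on $C(V_i)$, and conditions (1)--(3) together with uniqueness are read off from Lemmas~\ref{lem:D0-lift-g} and~\ref{lem:D0-lift-g-dual}. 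In particular, the paper never groups the simple subcoalgebras into $\varsigma$-orbits; it works one simple comodule at a time, the pairing $i \leftrightarrow i^{\vee}$ entering only implicitly through Lemma~\ref{lem:D0-lift-g-dual}.

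Your approach trades that categorical construction for the orbit analysis: once you know the $\varsigma$-orbits on $\{C_\lambda\}$ have length at most two, each orbit sum $D_O$ is a finite-dimensional $\varsigma$- and $*$-stable subcoalgebra, and Proposition~\ref{prop:BNS} applies verbatim to $D_O^{\ast}$. This is more elementary in that it treats Proposition~\ref{prop:BNS} as a black box and makes the reduction ``infinite $\to$ product of finite'' completely explicit; the price is the extra combinatorial step (which you correctly flag as the point needing care). The paper's approach, by contrast, emphasizes that Propositions~\ref{prop:BNS} and~\ref{prop:BNS-cog} are two instances of the same lifting argument rather than one being a corollary of the other.
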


We omit the proof here; see Appendix~A. Now we prove:

\begin{theorem}
  \label{thm:FS-thm-cpt-cog}
  Let $C$ be a compact coalgebra with real form $C_0$, and let $E$ be a coseparability idempotent for $C$. Then, for all simple comodule $V \in \mathcal{M}_{\fd}^C$,
  \begin{equation*}
    \gamma(t_{V(2)}) E(\varsigma(t_{V(1)}), t_{V(3)})
    = \begin{cases}
      + 1 & \text{if $V$ is real}, \\
      \ 0 & \text{if $V$ is complex}, \\
      - 1 & \text{if $V$ is quaternionic}
    \end{cases}
  \end{equation*}
  with respect to $C_0$, where $\varsigma: C \to C$ is the anti-coalgebra map associated with $C_0$ and $\gamma \in C^{\ast}$ is the element of Proposition \ref{prop:BNS-cog}.
\end{theorem}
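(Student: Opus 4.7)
The plan is to combine the $C^*$-categorical identity~\eqref{eq:FS-thm-star-coalg} with a coalgebraic analogue of the trace formula of Theorem~\ref{thm:FS-ind-sep-alg}, which I derive by cutting down all the data to a finite-dimensional $*$-subcoalgebra and then invoking the algebra-theoretic case directly.

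First, by Proposition~\ref{prop:compact-coalg}(5) I equip $V$ with an inner product making it an irreducible $*$-corepresentation, noting that $\sigma_\mathcal{J}(V)$ is insensitive to this choice. Under~\eqref{eq:FS-thm-star-coalg}, the right-hand side of the theorem equals $\nu_\mathcal{D}(V)$, so it suffices to prove the coalgebraic trace formula
\[
\nu_\mathcal{D}(V) \;=\; \gamma(t_{V(2)})\,E\bigl(\varsigma(t_{V(1)}),\,t_{V(3)}\bigr).
\]
Using Proposition~\ref{prop:compact-coalg}(3), let $C'\subset C$ be the finite sum of matrix $*$-summands containing $C(V)$ and $\varsigma(C(V))$. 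Then $C'$ is a finite-dimensional $*$-subcoalgebra and is $\varsigma$-stable because $\varsigma^2$ preserves each matrix summand by Proposition~\ref{prop:BNS-cog}(1). The restrictions of $E$, $\varsigma$, and $\gamma$ to $C'$ inherit all their identities, so $A':=(C')^*$ is a finite-dimensional $C^*$-algebra (Remark~\ref{rem:fd-C-st-characterization}); the pair $(S', g')$, with $S'(a):=a\circ\varsigma|_{C'}$ and $g':=\gamma|_{C'}$, is a dual structure for $A'$; and the element $\tilde E=\sum_i E'_i\otimes E''_i\in A'\otimes A'$ dual to $E|_{C'\times C'}$ is a separability idempotent of $A'$.

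The coalgebraic fully faithful $*$-functor $\Phi\colon\fdCorep(C')\to\Rep(A')$ sends $V$ to a simple $*$-representation and intertwines the dual structures on both sides (the coalgebraic counterpart of the argument in~\S\ref{subsec:RF-st-alg}), while the inclusion $\fdCorep(C')\hookrightarrow\fdCorep(C)$ preserves $\nu_\mathcal{D}$ because both $V$ and $D(V)$ are supported on $C'$. Applying Theorem~\ref{thm:FS-ind-sep-alg} to $(A', S', g', \tilde E)$ and unravelling via the convolution product in $A'=(C')^*$ together with the identity $\chi_V(a)=a(t_V)$ from~\eqref{eq:dual-alg-character} yields
\[
\nu_\mathcal{D}(V) = \sum_i \chi_V\bigl(S'(E'_i)\,g'\,E''_i\bigr)
= \sum_i E'_i(\varsigma(t_{V(1)}))\,\gamma(t_{V(2)})\,E''_i(t_{V(3)})
= \gamma(t_{V(2)})\,E\bigl(\varsigma(t_{V(1)}),t_{V(3)}\bigr),
\]
as desired. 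The main obstacle I anticipate is the bookkeeping required to verify the compatibilities between the cut-down data on $C'$ and the hypotheses of Theorem~\ref{thm:FS-ind-sep-alg}, and that restriction along $\fdCorep(C')\hookrightarrow\fdCorep(C)$ preserves the Frobenius--Schur indicator of $V$; these steps are routine but must all be in place before the final calculation is legitimate.
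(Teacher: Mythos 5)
Your proposal is correct, and its overall skeleton coincides with the paper's: both reduce the statement to the identity $\sigma_{\mathcal{J}}(V)=\nu_{\mathcal{D}}(V)$ of~\eqref{eq:FS-thm-star-coalg} together with a trace formula expressing $\nu_{\mathcal{D}}(V)$ through the coseparability idempotent. Where you diverge is in how that trace formula is obtained: the paper simply invokes ``the coalgebraic version of Theorem~\ref{thm:FS-ind-sep-alg}'' and refers to \cite{KenichiShimizu:2012-11} for its proof, whereas you re-derive it inside the present framework by cutting down to the finite-dimensional $*$-subcoalgebra $C'$ spanned by the matrix summands carrying $C(V)$ and $\varsigma(C(V))$, checking $\varsigma$-stability via Proposition~\ref{prop:BNS-cog}(1), dualizing to $A'=(C')^*$ with the dual structure $(S',g')=(\,\cdot\circ\varsigma|_{C'},\,\gamma|_{C'})$ and the separability idempotent dual to $E|_{C'\times C'}$, and then applying Theorem~\ref{thm:FS-ind-sep-alg} and the character identity~\eqref{eq:dual-alg-character}. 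This buys self-containedness (no appeal to the coalgebraic statement in the external reference) at the cost of the bookkeeping you yourself flag: that $C'$ is a $\varsigma$-stable (and $*$-stable) subcoalgebra, that restriction along $\fdCorep(C')\hookrightarrow\fdCorep(C)$ and the fully faithful functor $\Phi$ both preserve the Frobenius--Schur indicator because $\Hom(V,D(V))$ and the operator $T_{\mathcal{D}|V}$ are unchanged, and that $(S',g')$ satisfies the axioms of a dual structure for $A'$ (using $\gamma\circ\varsigma=\gamma^{-1}$ and $\varsigma^2(c)=\gamma(c_{(1)})c_{(2)}\gamma^{-1}(c_{(3)})$). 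All of these checks go through, and note that the $C^*$-structure on $A'$ is not even needed for Theorem~\ref{thm:FS-ind-sep-alg}, only the separability idempotent; so your argument is a legitimate, slightly more hands-on alternative to the paper's citation.
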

\begin{proof}
  Define $\mathcal{J}$ and $\mathcal{D}$ as in the previous subsection. By definition, $\sigma_{\mathcal{J}}(V)$ is equal to $+1$, $0$ and $-1$ according to whether $V$ is real, complex or quaternionic with respect to $C_0$. On the other hand,
  \begin{equation*}
    \sigma_{\mathcal{J}}(V) = \nu_{\mathcal{D}}(V) = \gamma(t_{V(2)}) E(\varsigma(t_{V(1)}), t_{V(3)})
  \end{equation*}
  by \eqref{eq:FS-thm-star-coalg} and the coalgebraic version of Theorem~\ref{thm:FS-ind-sep-alg} (see \cite{KenichiShimizu:2012-11} for details).
\end{proof}

\subsection{Compact quantum groups}

We apply our results to {\em compact quantum groups} proposed by Woronowicz \cite{MR1616348}. Instead of his original $C^*$-algebraic approach, we adopt the approach of Koornwinder and Dijkhuizen \cite{MR1431306,MR1310296}. See \cite[\S5]{MR1310296} for the comparison between these approaches.

Following \cite[Definition~2.2]{MR1310296}, a {\em compact quantum group algebra} (CQG algebra, for short) is a Hopf $*$-algebra $A$ spanned by the matrix elements of the unitary corepresentations of $A$. Here, we should recall that a {\em unitary corepresentation} of $A$ is a finite-dimensional Hilbert space $V$ endowed with a right $A$-comodule structure such that $\langle v_{(0)} | w \rangle (v_{(1)})^* = \langle v | w_{(0)} \rangle S(w_{(1)})$ for all $v, w \in V$, where $S$ is the antipode of $A$.

Now let $A$ be a CQG algebra. Note that a Hopf $*$-algebra is {\em not} a $*$-coalgebra in the sense of~\ref{subsec:RF-st-cog}. Thus we define an anti-linear operator $\dagger$ by
\begin{equation*}
  a^\dagger = S(a)^* \quad (a \in A).
\end{equation*}
Since $S$ is an anti-coalgebra map, $A$ is a $\dagger$-coalgebra, {\it i.e.}, a $*$-coalgebra with respect to the $*$-structure $\dagger: A \to A$. A ``$\dagger$-corepresentation'' of $A$ is nothing but a unitary corepresentation of $A$. Following, appropriate definitions of real, complex and quaternionic corepresentations of $A$ are:

\begin{definition}
  \label{def:RCH-CQG}
  Let $V$ be an irreducible corepresentation of the CQG algebra $A$ with character $t_V$.
  \begin{enumerate}
  \item $V$ is {\em real} if it admits a basis $\{ v_i \}_{i = 1}^n$ such that the matrix corepresentation $(c_{i j})$ with respect to $\{ v_i \}$ has the following property:
    \begin{equation*}
      c_{i j}^* = c_{i j}.
    \end{equation*}
  \item $V$ is {\em quaternionic} if it does not admit such a basis but $t_V^* = t_V$.
  \item $V$ is {\em complex} if $t_V^* \ne t_V$.
  \end{enumerate}
\end{definition}

By Theorem~\ref{thm:RF-cog-sign-1}, $V$ is real, complex and quaternionic if and only if it is real, complex and quaternionic with respect to the real form
\begin{equation*}
  A_0
  = \{ a \in A \mid S(a^\dagger) = a \}
  \quad (= \{ a \in A \mid a^* = a \})
\end{equation*}
of the $\dagger$-coalgebra $A$.

By Proposition~\ref{prop:compact-coalg}, the $\dagger$-coalgebra $A$ is compact. Since $S \circ \dagger \circ S \circ \dagger = \id_A$, there uniquely exists a `$\dagger$-positive' element $\gamma \in A^*$ satisfying the conditions (1)--(4) of Proposition~\ref{prop:BNS-cog}. Here, we are saying that $f \in A^*$ is $\dagger$-positive if $f = g^\dagger \star g$ for some $g \in A^*$, where $g^{\dagger}(a) = \overline{g(a^{\dagger})}$ ($a \in A$).

A {\em Haar functional} on $A$ is a linear map $h: A \to \mathbb{C}$ satisfying
\begin{equation*}
  h(1_A) = 1,
  \quad h(a_{(1)}) \cdot a_{(2)} = h(a) 1_A = h(a_{(2)}) \cdot a_{(1)}
  \quad (a \in A).
\end{equation*}
See \cite[\S3]{MR1310296} for the existence and the uniqueness of a Haar functional. Now let $h$ be the Haar functional on $A$. It easily follows that
\begin{equation}
  \label{eq:Haar-functional-2}
  h(a_{(1)} b) \cdot a_{(2)} = h(a b_{(1)}) \cdot S(b_{(2)}), \quad
  a_{(1)} \cdot h(a_{(2)} b) = S(b_{(1)}) \cdot h(a b_{(2)})
\end{equation}
for all $a, b \in A$. The {\em Frobenius-Schur theorem for compact quantum groups} is now stated as follows:

\begin{theorem}
  \label{thm:FS-thm-CQG}
  If $V$ is an irreducible corepresentation of $A$, then
  \begin{equation}
    \label{eq:FS-thm-QCG}
    h(t_{V(1)} t_{V(2)}) = \frac{\varepsilon(t_V)}{\gamma(t_V)} \times \begin{cases}
      + 1 & \text{if $V$ is real}, \\
      \ 0 & \text{if $V$ is complex}, \\
      - 1 & \text{if $V$ is quaternionic}
    \end{cases}
  \end{equation}
  in the sense of Definition~\ref{def:RCH-CQG}.
\end{theorem}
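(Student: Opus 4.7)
The strategy is to apply Theorem~\ref{thm:FS-thm-cpt-cog} to $A$ viewed as a compact $\dagger$-coalgebra with respect to $a^\dagger := S(a)^*$. First I check that the real form of this $\dagger$-coalgebra corresponding under \eqref{eq:RF-st-cog-1} to the choice $\varsigma = S$ is exactly $A_0 = \{a \in A : a^* = a\}$, and that $A$ is compact in the sense of Proposition~\ref{prop:compact-coalg}(5), since every finite-dimensional $A$-comodule admits an invariant inner product making it a unitary corepresentation. Theorem~\ref{thm:RF-cog-sign-1} then identifies the trichotomy in Definition~\ref{def:RCH-CQG} with the general notion of real/complex/quaternionic with respect to $A_0$, so what remains is to express $\sigma_{\mathcal{J}}(V)$ in terms of $h(t_{V(1)} t_{V(2)})$.

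For this I take the coseparability idempotent $E(a, b) := h(S(a) b)$. The counit axiom is immediate from $S(c_{(1)}) c_{(2)} = \varepsilon(c) \cdot 1_A$ together with $h(1_A) = 1$; the coassociativity axiom follows by applying $S^{-1}$ to both tensor factors of the first identity in \eqref{eq:Haar-functional-2} with $a := S(c)$, $b := d$, which gives $c_{(1)} h(S(c_{(2)}) d) = d_{(2)} h(S(c) d_{(1)}) = E(c, d_{(1)}) d_{(2)}$. Plugging into Theorem~\ref{thm:FS-thm-cpt-cog} yields
\begin{equation*}
  \sigma_{\mathcal{J}}(V) = \gamma(t_{V(2)})\, h\bigl(S^2(t_{V(1)})\, t_{V(3)}\bigr).
\end{equation*}
The Hopf-algebraic identity $S^2(a_{(1)})\, \gamma(a_{(2)}) = \gamma(a_{(1)})\, a_{(2)}$ in $A$, which I deduce from Proposition~\ref{prop:BNS-cog}(1) together with $\gamma^{-1} \star \gamma = \varepsilon$ (after reabsorbing two factors of the resulting 4-fold Sweedler expression via $(\gamma^{-1} \star \gamma) = \varepsilon$), then rewrites the above, after splitting $t_{V(1)} \otimes t_{V(2)}$ as $\Delta$ of the first factor of $\Delta(t_V)$ via coassociativity, into
\begin{equation*}
  \sigma_{\mathcal{J}}(V) = \gamma(t_{V(1)})\, h(t_{V(2)} t_{V(3)}) = (\gamma \star f)(t_V),
\end{equation*}
where $f \in A^*$ is defined by $f(a) := h(a_{(1)} a_{(2)})$.

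The heart of the proof is the Schur-type claim that for simple $V$ the restriction of $f$ to the simple subcoalgebra $C(V)$ is a scalar multiple of the counit: $f(c_{ij}^V) = \lambda_V\, \delta_{ij}$ for a unique $\lambda_V \in \mathbb{C}$. Granted this, $(\gamma \star f)(t_V) = \lambda_V \gamma(t_V)$ and $f(t_V) = \lambda_V \varepsilon(t_V)$, so eliminating $\lambda_V$ yields exactly $\sigma_{\mathcal{J}}(V) = \frac{\gamma(t_V)}{\varepsilon(t_V)}\, h(t_{V(1)} t_{V(2)})$, which is equivalent to \eqref{eq:FS-thm-QCG}. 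The scalarity itself is the main obstacle, and it is a Peter--Weyl/Schur-orthogonality statement: the only contribution to $h(c_{ik}^V c_{kj}^V)$ comes from the projection of $c_{ik}^V c_{kj}^V$ onto the trivial subcoalgebra $\mathbb{C} \cdot 1_A$ in the Peter--Weyl decomposition of $A$, which vanishes unless the trivial representation appears in $V \otimes V$, i.e. unless $V$ is self-dual; in the self-dual case, the invariance of the corresponding bilinear pairing $\omega$ on $V$ together with the Schur orthogonality of matrix coefficients forces the summed-over-$k$ expression to be proportional to $\delta_{ij}$, while in the non-self-dual case $\lambda_V = 0$, consistent with $\sigma_{\mathcal{J}}(V) = 0$ for a complex representation. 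In the classical limit ($\gamma = \varepsilon$, $S^2 = \id$) the whole simplification collapses and the formula reduces to the classical Frobenius--Schur integral \eqref{eq:intro-FS-ind}.
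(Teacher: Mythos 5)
Your proposal is correct and follows the same route as the paper: view $A$ as a compact $\dagger$-coalgebra with $\varsigma=S$ and real form $A_0=\{a\mid a^*=a\}$, feed the coseparability idempotent $E(a,b)=h(S(a)b)$ (verified from \eqref{eq:Haar-functional-2}) into Theorem~\ref{thm:FS-thm-cpt-cog}, and use Proposition~\ref{prop:BNS-cog}(1) to rewrite the result as $(\gamma\star h^{[2]})(t_V)$ with $h^{[2]}(a)=h(a_{(1)}a_{(2)})$. The only place you diverge is the final Schur-type step: the paper observes that $h^{[2]}$ is a \emph{central} element of the dual algebra $A^*$ (again by \eqref{eq:Haar-functional-2}) and then applies Remark~\ref{rem:character} to the simple $A^*$-module $\Phi(V)$ together with \eqref{eq:dual-alg-character}, which gives $f(c_{ij})=\lambda_V\delta_{ij}$ in one line; you instead argue this scalarity through the Peter--Weyl decomposition and orthogonality of matrix coefficients. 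Your mechanism does work (the projection onto invariants of $V\otimes V$ is a comodule map, and composing the invariant vector with the invariant functional gives a comodule endomorphism of $V$, hence a scalar by Schur), but as written it is the least rigorous part of your argument, and the centrality route is shorter and stays entirely within the paper's toolkit. Two further small remarks: compactness of the $\dagger$-coalgebra is immediate from the definition of a CQG algebra, which is precisely condition (1) of Proposition~\ref{prop:compact-coalg}, so there is no need to verify condition (5) (whose proof would itself invoke the Haar functional's positivity); and at the very end one divides by $\gamma(t_V)$, which is legitimate because $\gamma(t_V)>0$, a point worth stating explicitly.
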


Since $\varepsilon(t_V) = \dim_{\mathbb{C}}(V) > 0$ and $\gamma(t_V) > 0$, this theorem implies that the left-hand side of \eqref{eq:FS-thm-QCG} is positive, zero and negative according to whether $V$ is real, complex or quaternionic.

\begin{proof}
  Define $\sigma(V)$ to be $+1$, $0$, $-1$ according to whether $V$ is real, complex or quaternionic. By~\eqref{eq:Haar-functional-2},
  \begin{equation*}
    E: A \times A \to \mathbb{C}, \quad E(a, b) = h(S(a) b) \quad (a, b \in A)
  \end{equation*}
  is a coseparability idempotent of the coalgebra $A$. By Theorem~\ref{thm:FS-thm-cpt-cog},
  \begin{equation*}
    \sigma(V) = \gamma(t_{V(2)}) E(S(t_{V(1)}), t_{V(3)})
    = \gamma(t_{V(1)}) h(t_{V(2)} t_{V(3)})
    = (\gamma \star h^{[2]}) (t_V),
  \end{equation*}
  where $h^{[2]}(a) = h(a_{(1)} a_{(2)})$ ($a \in A$). By \eqref{eq:Haar-functional-2}, $h^{[2]}$ is a central element of the dual algebra $A^*$. Now let $\chi$ be the character of the left $A$-module $\Phi(V)$. By Remark~\ref{rem:character} and \eqref{eq:dual-alg-character}, we compute:
  \begin{equation*}
    \sigma(V)
    = \chi(\gamma \star h^{[2]})
    = \frac{\chi(\gamma) \chi(h^{[2]})}{\chi(\varepsilon)}
    = \frac{\gamma(t_{V})}{\varepsilon(t_V)} \cdot h(t_{V(1)} t_{V(2)}).
    \qedhere
  \end{equation*}
\end{proof}

\begin{remark}
  If $A = \mathcal{O}(G)$ is the Hopf $*$-algebra of continuous representative functions on a compact group $G$, then $\gamma = \varepsilon$ (since $\mathcal{O}(G)$ is commutative) and the Haar functional $h$ on $\mathcal{O}(G)$ is given by
  \begin{equation*}
    h(f) = \int_G a(g) d \mu(g) \quad (a \in \mathcal{O}(G)).
  \end{equation*}
  The real form $A_0 = \{ a \in A \mid S(a^\dagger) = a \}$ is precisely the subset of $\mathcal{O}(G)$ consisting of $\mathbb{R}$-valued functions. Thus Definition~\ref{def:RCH-CQG} agrees with the ordinary definitions of real, complex and quaternionic representations. See \cite[Example~4.9]{KenichiShimizu:2012-11} for details on why the right-hand side of~\eqref{eq:FS-thm-QCG} is equal to that of \eqref{eq:intro-FS-ind}.
\end{remark}

\begin{remark}
  Woronowicz \cite{MR1616348} showed that there exists a unique family $\{ f_z \}_{z \in \mathbb{C}}$ of algebra maps $f_z: A \to \mathbb{C}$ characterized by numerous properties, including:
  \begin{gather*}
    f_0 = \varepsilon,
    \quad f_w \star f_z = f_{z + w},
    \quad f_{\overline{z}}(a^*) = \overline{f_{-z}(a)},
    \quad f_z(S(a)) = f_{-z}(a), \\
    \quad S^2(a) = f_1(a_{(1)}) a_{(2)} f_{-1}(a_{(3)})
    \quad (w, z \in \mathbb{C}, a \in A),
  \end{gather*}
  see also \cite{MR1431306}. $f_1$ is $\dagger$-positive, since $f_1 = f_{1/2}^{\dagger} \star f_{1/2}$. Going back to the construction of $\{ f_z \}$, we see that $f_1$ satisfies the conditions (1) and (2) of Proposition~\ref{prop:BNS-cog}. Hence, by the uniqueness, $\gamma = f_1$.
\end{remark}

\appendix

\section{Duality lifting problems}

\subsection{A lift yields a dual structure}

Let $A$ be a $*$-algebra with real form $A_0$. By using the anti-algebra map $S$ associated with $A_0$, we define a contravariant $\mathbb{C}$-linear functor $D_0: \Rep(A) \to \Mod{A}$ as in \S\ref{subsec:RF-st-alg}.

Let $\mathcal{C}$ be a full subcategory of $\Rep(A)$. Recall that a lift of $D_0$ on $\mathcal{C}$ is a contravariant endo-$*$-functor $D: \mathcal{C} \to \mathcal{C}$ such that $U \circ D = D_0$, where $U: \mathcal{C} \to \Mod{A}$ is the forgetful functor.

\begin{proposition}
  \label{prop:D0-lift}
  If there exists a lift $D$ of $D_0$ on $\mathcal{C}$, then there exists a natural isomorphism $\eta: \id_{\mathcal{C}} \to D D$ such that the pair $(D, \eta)$ is a $*$-compatible dual structure for $\mathcal{C}$.
\end{proposition}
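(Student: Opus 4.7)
The plan is to construct $\eta$ in three stages: first an $A$-linear natural isomorphism $\tilde{\eta}: \id_{\mathcal{C}} \to DD$ from iterated Riesz maps, then a unitary version $\eta$ via polar decomposition in $\mathcal{C}$, and finally verification of the duality axiom using that $D$ is a $*$-functor.

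\emph{Step 1 (raw candidate).} For $X \in \mathcal{C}$, the anti-linear Riesz map $\phi_X : X \to X^\vee$ satisfies $\phi_{\overline{a}\cdot x} = a \cdot \phi_x$ by \eqref{eq:Riesz-A-linear}, where $\overline{a} = S(a)^*$. Because $U \circ D = D_0$, the underlying $A$-module of $D(X)$ is precisely $X^\vee$ with exactly this action, so $\phi_X$ may be read as an anti-linear bijection from $X$ onto the Hilbert space $D(X)$ furnished by the lift. Set
\[
  \tilde{\eta}_X := \phi_{D(X)} \circ \phi_X : X \longrightarrow DD(X),
\]
which is $\mathbb{C}$-linear (two anti-linearities cancel), $A$-linear (apply \eqref{eq:Riesz-A-linear} twice), and bijective. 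Naturality in $X$ follows from Lemma~\ref{lem:Riesz-1} used twice: given $f : X \to Y$ in $\mathcal{C}$, Riesz-naturality yields $\phi_Y \circ f = D(f^*) \circ \phi_X$, and applying it once more to $D(f^*) = D(f)^*: D(X) \to D(Y)$ (here $D$ being a $*$-functor is essential) gives $\phi_{D(Y)} \circ D(f)^* = DD(f) \circ \phi_{D(X)}$; concatenating proves that $\tilde{\eta}$ is natural.

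\emph{Step 2 (unitary normalisation).} The natural transformation $\tilde{\eta}^* \tilde{\eta} : \id_{\mathcal{C}} \to \id_{\mathcal{C}}$ has positive invertible components in $\mathcal{C}$, so by Lemma~\ref{lem:C-star-cat-2} its positive square root $|\tilde{\eta}| = (\tilde{\eta}^* \tilde{\eta})^{1/2}$ and its inverse $|\tilde{\eta}|^{-1}$ are natural. Define
\[
  \eta_X := \tilde{\eta}_X \circ |\tilde{\eta}_X|^{-1}.
\]
Lemma~\ref{lem:C-star-cat-1} then guarantees each $\eta_X$ is unitary, so $\eta : \id_{\mathcal{C}} \to DD$ is a unitary natural isomorphism.

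\emph{Step 3 (duality axiom).} It remains to verify $D(\eta_X) \circ \eta_{D(X)} = \id_{D(X)}$. Writing $\tilde{\eta}_X = \xi_X \circ G_X$ at the underlying vector-space level—where $\xi_X : X \to X^{\vee\vee}$ is the standard double-dual map and $G_X$ is the positive invertible operator on $X$ determined by $\langle \phi_x | \phi_y \rangle_{D(X)} = \langle y | G_X x \rangle_X$—the triangle identity $\xi_X^\vee \circ \xi_{X^\vee} = \id_{X^\vee}$ for vector-space double duals collapses $D(\tilde{\eta}_X) \circ \tilde{\eta}_{D(X)}$ to $G_X^\vee \circ G_{D(X)}$ as a $\mathbb{C}$-linear endomorphism of $D(X)$. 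The $*$-compatibility $D(f^*) = D(f)^*$ forces each $G_X$ to be $A$-linear and the family $\{G_X\}$ to be natural on $\mathcal{C}$, which in turn pins down $G_{D(X)}$ in terms of $G_X$ and identifies $G_X^\vee \circ G_{D(X)}$ with the composite $D(|\tilde{\eta}_X|) \circ |\tilde{\eta}_{D(X)}|$. Substituting the polar decomposition $\tilde{\eta} = \eta \cdot |\tilde{\eta}|$ into $D(\eta_X) \circ \eta_{D(X)} = D(|\tilde{\eta}_X|)^{-1} \circ D(\tilde{\eta}_X) \circ \tilde{\eta}_{D(X)} \circ |\tilde{\eta}_{D(X)}|^{-1}$ then cancels the positive factors and yields $\id_{D(X)}$.

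The main obstacle is Step 3. Producing $\tilde{\eta}$ and its unitary normalisation $\eta$ is generic $C^*$-categorical machinery; what is delicate is the triangle identity, because the positive operators $G_X$ encoding the chosen inner products on the $D(X)$ vary with $X$ and there is no \emph{a priori} global element like the $g \in A$ of \S\ref{subsec:DS-alg} controlling them. The resolution is that the $*$-functoriality of $D$ ties the $G_X$'s into a coherent family compatible with the action of $D$ itself, and the uniqueness of the polar decomposition (Lemmas~\ref{lem:C-star-cat-1} and~\ref{lem:C-star-cat-2}) in the $C^*$-category $\mathcal{C}$ forces the unitary part $\eta$ to satisfy the duality axiom on the nose.
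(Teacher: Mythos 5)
Your Steps 1 and 2 coincide with the paper's construction: the same $\tilde{\eta}_X = \phi_{D(X)} \circ \phi_X$, the same appeal to Lemmas~\ref{lem:C-star-cat-1} and~\ref{lem:C-star-cat-2} to pass to the unitary part $\eta_X = \tilde{\eta}_X \circ |\tilde{\eta}_X|^{-1}$, and your naturality argument (Riesz naturality applied twice, using $D(f^*)=D(f)^*$) is exactly what the paper leaves implicit. The problem is Step 3, which you yourself identify as the delicate point but then do not actually prove. The cancellation at the end requires precisely the identity $D(\tilde{\eta}_X) \circ \tilde{\eta}_{D(X)} = D(|\tilde{\eta}_X|) \circ |\tilde{\eta}_{D(X)}|$, and your justification for it --- that $*$-compatibility ``pins down $G_{D(X)}$ in terms of $G_X$'' and that uniqueness of the polar decomposition ``forces'' the unitary part to satisfy the axiom --- is an assertion, not an argument. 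Naturality of the family $\{G_X\}$ (which is indeed equivalent to $D$ being a $*$-functor) only relates $G_X$ and $G_Y$ along morphisms $X \to Y$ of $\mathcal{C}$; since in general there is no morphism $X \to D(X)$, it does not determine $G_{D(X)}$ from $G_X$, so the claimed identification of $G_X^\vee \circ G_{D(X)}$ with $D(|\tilde{\eta}_X|) \circ |\tilde{\eta}_{D(X)}|$ is left unsupported. (A side error: $G_X$ is not $A$-linear; the $*$-representation property of $D(X)$ gives $G_X(a x) = S^2(a) G_X(x)$, the twisted intertwining relation, as with the family $g$ of Lemma~\ref{lem:D0-lift-g}.)

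What is actually needed, and what the paper supplies, is a two-step argument you have skipped. First, setting $\alpha_X = D(\tilde{\eta}_X) \circ \tilde{\eta}_{D(X)}$, one shows $\alpha_X = D(|\tilde{\eta}_X|^2)$: this rests on the computation $(\phi_X^*)^\vee = \phi_{D(X)}$, which identifies $D(\tilde{\eta}_X^*)$ with $\tilde{\eta}_{D(X)}$, so that $D(|\tilde{\eta}_X|^2) = D(\tilde{\eta}_X)D(\tilde{\eta}_X^*) = \alpha_X$. Second, one must pass from squares to square roots: naturality of $\tilde{\eta}$ applied to the isomorphism $\tilde{\eta}_X$ gives $\tilde{\eta}_{DD(X)} = DD(\tilde{\eta}_X)$, hence $D(\alpha_X) = \alpha_{D(X)}$, hence $D^2(|\tilde{\eta}_X|^2) = D(|\tilde{\eta}_{D(X)}|^2)$; faithfulness of $D$ then yields $D(|\tilde{\eta}_X|^2) = |\tilde{\eta}_{D(X)}|^2$, and since a $*$-functor preserves positivity, uniqueness of positive square roots gives $D(|\tilde{\eta}_X|) = |\tilde{\eta}_{D(X)}|$, which is the identity your cancellation needs. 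Without some version of this square-root step (or an equally concrete substitute), your Step 3 is a restatement of the goal rather than a proof of it.
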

\begin{proof}
  For each $X \in \mathcal{C}$, we define $\tilde{\eta}_X = \phi_{D(X)} \circ \phi_X: X \to D D(X)$ ({\it cf}. \eqref{eq:Riesz-bidual}). One can show that $\tilde{\eta} = \{ \tilde{\eta}_X \}_{X \in \mathcal{C}}$ defines a natural isomorphism $\tilde{\eta}: \id_{\mathcal{C}} \to D D$. Now, for each $X \in \mathcal{C}$, we define
  \begin{equation*}
    \eta_X = \tilde{\eta}_X \circ | \tilde{\eta}_X |^{-1}: X \to D D(X).
  \end{equation*}
  By Lemmas~\ref{lem:C-star-cat-1} and~\ref{lem:C-star-cat-2}, $\eta = \{ \eta_X \}_{X \in \mathcal{C}}$ is a unitary natural isomorphism. We show that the pair $(D, \eta)$ is indeed a $*$-compatible dual structure for $\mathcal{C}$. For this purpose, it is sufficient to verify~\eqref{eq:dual-str-1}.

  For this purpose, we set $\alpha_X = D(\tilde{\eta}_X) \circ \tilde{\eta}_{D(X)}$ for $X \in \mathcal{C}$. By the naturality of $\tilde{\eta}$, we have $\tilde{\eta}_{D D(X)} \circ \tilde{\eta}_X = D D(\tilde{\eta}_{X}) \circ \tilde{\eta}_X$. Since $\tilde{\eta}_X$ is an isomorphism, $\tilde{\eta}_{D D(X)} = D D(\tilde{\eta}_{X})$. Hence,
  \begin{equation}
    \label{eq:D0-lift-2}
    D(\alpha_{X}) = D(\tilde{\eta}_{D(X)}) D D(\tilde{\eta}_X) = D(\tilde{\eta}_{D(X)}) \tilde{\eta}_{D D(X)} = \alpha_{D(X)}.
  \end{equation}
  Let $\phi_X^*: D(X) \to X$ be the adjoint operator of $\phi_X: X \to D(X)$. For $x, y \in X$,
  \begin{align*}
    \langle (\phi_X^{*})^{\vee}(\phi_x), \phi_y \rangle
    = \langle \phi_x, \phi_X^*(\phi_y) \rangle
    = \langle x | \phi_X^*(\phi_y) \rangle_{X}
    = \langle \phi_x | \phi_y \rangle_{D(X)}.
  \end{align*}
  This implies $(\phi_X^*)^\vee = \phi_{D(X)}$. By using this result, we get:
  \begin{equation}
    \label{eq:D0-lift-3}
    D(|\tilde{\eta}_X|^2)
    = D(\tilde{\eta}_X) \circ (\phi_{D(X)}^*)^\vee \circ (\phi_{X}^*)^\vee
    = D(\tilde{\eta}_X) \tilde{\eta}_X
    = \alpha_X.
  \end{equation}
  Hence, by~\eqref{eq:D0-lift-2} and~\eqref{eq:D0-lift-3},
  \begin{equation*}
    D^2(|\tilde{\eta}_X|^2) = D^2(\tilde{\eta}_X^* \tilde{\eta}_X)
    = D(\alpha_X) = \alpha_{D(X)}
    = D(\tilde{\eta}_{D(X)}^* \tilde{\eta}_{D(X)})
    = D(|\tilde{\eta}_{D(X)}|^2).
  \end{equation*}
  Since $D$ is an anti-equivalence, $D(|\eta_X|^2) = |\tilde{\eta}_{D(X)}|^2$. Taking the square root, we obtain $D(|\tilde{\eta}_X|) = |\tilde{\eta}_{D(X)}|$. Now~\eqref{eq:dual-str-1} is proved as follows:
  \begin{align*}
    D(\eta_X) \eta_{D(X)}
    & = D(|\tilde{\eta}_X|^{-1}) D(\tilde{\eta}_X) \tilde{\eta}_{D(X)} |\tilde{\eta}_{D(X)}|^{-1} \\
    & = |\tilde{\eta}_{D(X)}|^{-1} |\tilde{\eta}_{D(X)}|^2 |\tilde{\eta}_{D(X)}|^{-1}
    = \id_{D(X)}. \qedhere
  \end{align*}
\end{proof}

\subsection{Criterion for existence of a lift}
\label{subsec:D0-lift-criterion}

Throughout this subsection, we suppose that $\{ X_i \}_{i \in I}$ is a family of finite-dimensional irreducible $*$-representations of $A$ such that for all $i \in I$, the dual $X_i^\vee$ is isomorphic to $X_j$ for some $j \in I$. The aim of this subsection is to show that the functor $D_0$ can be lifted on the full subcategory
\begin{equation}
  \label{eq:full-sub-gen-by-Xi}
  \mathcal{C} := \{ X \in \Rep(A) \mid
  X \cong X_{i_1} \oplus \dotsb \oplus X_{i_m} \text{ for some $i_1, \dotsc, i_m \in I$} \}.
\end{equation}
We may assume that $X_i \not \cong X_j$ whenever $i \ne j$. Then, for $i \in I$, we can define $i^\vee \in I$ so that $X_i^\vee \cong X_{i^\vee}$. One can define a contravariant endofunctor $D$ on $\mathcal{C}$ by extending $X_i \mapsto X_{i^\vee}$. However, it is difficult to check that the functor $D$ so-defined is a $*$-functor since the inner product on $D(X)$ is not given explicitly. Our approach is first to construct a positive operator $g_X: X \to X$ for each $X \in \mathcal{C}$ and then define $D(X)$ to be the left $A$-module $X^\vee$ with the inner product
\begin{equation}
  \label{eq:D0-lift-ip}
  \langle \phi_x | \phi_y \rangle_{D(X)} := \langle y | g_X(x) \rangle_X \quad (x, y \in X).
\end{equation}

Now we explain how to construct $g_X$. First, let $i \in I$. By the assumption, $X_i^\vee$ has an inner product making it into a $*$-representation. Fix such an inner product $\langle -|- \rangle_0$ and then define $\tilde{g}_i: X_i \to X_i$ by
\begin{equation*}
  \langle y | \tilde{g}_i(x) \rangle = \langle \phi_x | \phi_y \rangle_0 \quad (x, y \in X_i).
\end{equation*}
Since $(y, x) \mapsto \langle \phi_x | \phi_y \rangle_0$ is an inner product on $X_i$, $\tilde{g}_i$ is positive and invertible. Moreover, $\tilde{g}_i(a x) = S^2(a) \tilde{g}_i(x)$ holds for all $a \in A$ and $x \in X$. Now set
\begin{equation*}
  g_i = \Trace(\tilde{g}_i^{-1})^{1/2} \Trace(\tilde{g}_i)^{-1/2} \cdot \tilde{g}_i.
\end{equation*}
Then $g_i$ is an invertible linear map such that
\begin{equation}
  \label{eq:D0-lift-g-0}
  g_i \ge 0, \quad \Trace(g_i) = \Trace(g_i^{-1}) > 0,
  \quad g_i(a x) = S^2(a) g_i(x) \quad (a \in A, x \in X_i).
\end{equation}
Now let $X \in \mathcal{C}$. By the definition of $\mathcal{C}$, there is a canonical isomorphism
\begin{equation}
  \label{eq:D0-lift-can-iso}
  \bigoplus_{i \in I} X_i \otimes_{\mathbb{C}} \Hom_A(X_i, X) \to X,
  \quad (x_i \otimes f_i)_{i \in I} \mapsto \sum_{i \in I} f_i(x_i)
\end{equation}
of left $A$-modules. Recall that $\End_A(X_i) = \mathbb{C} \cdot \id_{X_i}$. One can check that \eqref{eq:D0-lift-can-iso} is unitary if we define an inner product on $\Hom_A(X, X_i)$ by
\begin{equation*}
  \langle f | g \rangle \cdot \id_{X_i} = f^* g \quad (f, g \in \Hom_A(X_i, X)).
\end{equation*}
By using \eqref{eq:D0-lift-can-iso}, we define $g_X: X \to X$ so that the diagram
\begin{equation*}
  \begin{CD}
    X @>>> \bigoplus_{i \in I} X_i \otimes_{\mathbb{C}} \Hom_A(X_i, X) \\
    @V{g_X}V{}V @V{}V{(g_i \otimes \id)_{i \in I}}V \\
    X @>>> \bigoplus_{i \in I} X_i \otimes_{\mathbb{C}} \Hom_A(X_i, X)
  \end{CD}
\end{equation*}
commutes. By \eqref{eq:D0-lift-g-0} and the unitarity of \eqref{eq:D0-lift-can-iso}, we obtain:
\begin{gather}
  \label{eq:D0-lift-g-1}
  g_X \ge 0,
  \quad \Trace(g_X^{-1}) = \Trace(g_X), \\
  \label{eq:D0-lift-g-3}
  g_X(a x) = S^2(a) g_X(x)
\end{gather}
for all $a \in A$ and $x \in X \in \mathcal{C}$. If $f: X \to Y$ is a morphism in $\mathcal{C}$, then
\begin{equation}
  \label{eq:D0-lift-g-4}
  g_Y \circ f = f \circ g_X
\end{equation}
by the definition of $g_X$'s.

To summarize results so far, we introduce the following notation: Given a ring homomorphism $\alpha: R_1 \to R_2$, we denote by $\alpha^\natural: \Mod{R_2} \to \Mod{R_1}$ the pull-back functor along $\alpha$. Since $S^2: A \to A$ is an algebra automorphism, it induces a functor
\begin{equation*}
  S^{2\natural}: \Mod{A} \to \Mod{A}.
\end{equation*}
Let $U: \mathcal{C} \to \Mod{A}$ be the forgetful functor. Then:

\begin{lemma}
  \label{lem:D0-lift-g}
  The family $g = \{ g_X \}_{X \in \mathcal{C}}$ defines a natural isomorphism
  \begin{equation*}
    g: U \to S^{2\natural} U
  \end{equation*}
  satisfying~\eqref{eq:D0-lift-g-1}. Such a natural isomorphism is unique.
\end{lemma}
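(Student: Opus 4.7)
My plan is to treat the three assertions of the lemma — that $g$ is a natural transformation, that each $g_X$ is an isomorphism satisfying \eqref{eq:D0-lift-g-1}, and that such a transformation is unique — one at a time, leaning on the work already done during the construction for the first two.

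For the existence part, I would begin by observing that each $g_X$ is an isomorphism because it was built out of invertible operators $g_i$ via the unitary decomposition \eqref{eq:D0-lift-can-iso}. That $g_X$ is a morphism in $\Mod{A}$ from $U(X)$ to $S^{2\natural}U(X)$ is precisely the content of \eqref{eq:D0-lift-g-3}. Naturality in $X$ — the commuting square for an arbitrary morphism $f : X \to Y$ in $\mathcal{C}$ — is \eqref{eq:D0-lift-g-4}, which follows from the fact that $f$ decomposes through the summands $X_i \otimes_{\mathbb{C}} \Hom_A(X_i,-)$ and that $g_i \otimes \id$ commutes with $\id_{X_i} \otimes (\text{post-composition with } f)$. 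The two conditions in \eqref{eq:D0-lift-g-1} are already recorded in the construction, as each $g_i$ was normalized to have $\Trace(g_i) = \Trace(g_i^{-1})$ and positivity is preserved by orthogonal direct sums.

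For uniqueness, suppose $g' = \{g'_X\}_{X \in \mathcal{C}}$ is another natural isomorphism $U \to S^{2\natural}U$ satisfying \eqref{eq:D0-lift-g-1}. By naturality and \eqref{eq:D0-lift-can-iso}, it suffices to show $g_{X_i} = g'_{X_i}$ for every $i \in I$. The key observation is that if $g_{X_i}$ and $g'_{X_i}$ are both $A$-linear when regarded as maps $X_i \to S^{2\natural} X_i$, then the composite $h_i := g_{X_i}^{-1} \circ g'_{X_i}$ is a genuine $A$-module endomorphism of $X_i$: the two $S^2$-twists cancel. Since $X_i$ is simple, Schur's lemma yields $h_i = \lambda_i \cdot \id_{X_i}$ for some $\lambda_i \in \mathbb{C}^\times$, so $g'_{X_i} = \lambda_i g_{X_i}$. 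Positivity of both operators forces $\lambda_i > 0$, and the trace identity in \eqref{eq:D0-lift-g-1} applied to $g'_{X_i}$ combined with $\Trace(g_{X_i}) = \Trace(g_{X_i}^{-1}) > 0$ gives $\lambda_i = \lambda_i^{-1}$, hence $\lambda_i = 1$.

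The main point requiring care is the cancellation used in the uniqueness step — explicitly, the short calculation that if $g_{X_i}(a x) = S^2(a) g_{X_i}(x)$ and similarly for $g'_{X_i}$, then $h_i(a x) = a \cdot h_i(x)$ — since this is what reduces the problem to Schur's lemma and then to the positivity-plus-trace constraint. Everything else is a matter of reading off properties already built into the construction.
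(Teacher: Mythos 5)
Your proposal is correct and follows essentially the same route as the paper: existence is read off from the properties \eqref{eq:D0-lift-g-1}--\eqref{eq:D0-lift-g-4} established in the construction, and uniqueness reduces to the simple summands $X_i$, where Schur's lemma gives $g'_{X_i} = \lambda_i g_{X_i}$ and the trace normalization together with positivity forces $\lambda_i = 1$. The only (immaterial) difference is that you apply Schur's lemma to the untwisted endomorphism $g_{X_i}^{-1}\circ g'_{X_i}$ of $X_i$, whereas the paper applies it directly to the one-dimensional space $\Hom_A(X_i, S^{2\natural}X_i)$, and you use positivity before the trace identity rather than after.
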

\begin{proof}
  The first sentence is nothing more than a paraphrase of \eqref{eq:D0-lift-g-1}-\eqref{eq:D0-lift-g-4}. To show the uniqueness, let $g': U \to S^{2\natural} U$ be another such natural isomorphism. Set $g_i = g_{X_i}$ and $g'_i = g'_{X_i}$. It is sufficient to show that $g_i = g'_i$ for all $i \in I$. Now let $i \in I$. Since $X_i$ and $S^{2\natural}(X_i)$ are finite-dimensional simple $A$-modules, $g'_i = c g_i$ for some $c \in \mathbb{C}^\times$ by Schur's lemma. By \eqref{eq:D0-lift-g-1},
  \begin{equation*}
    c \Trace(g_i) = \Trace(g'_i) = \Trace(g_i'^{-1}) = c^{-1} \Trace(g_i) = c^{-1} \Trace(g_i).
  \end{equation*}
  By~\eqref{eq:D0-lift-g-0}, $c = \pm 1$. Since $g_i$ and $g'_i$ are positive, $c$ must be $+1$.
\end{proof}

The natural isomorphism $g$ plays the role of the element $g$ in \S\ref{subsec:RF-st-alg}. For $X \in \mathcal{C}$, we define $D(X) \in \mathcal{C}$ to be the left $A$-module $X^\vee$ endowed with the inner product given by \eqref{eq:D0-lift-ip}. The following lemma is proved in a similar way to \S\ref{subsec:RF-st-alg}:

\begin{lemma}
  $X \mapsto D(X)$ extends to a lift of $D_0$ on $\mathcal{C}$.
\end{lemma}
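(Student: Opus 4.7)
The plan is to adapt the construction of \S\ref{subsec:RF-st-alg}, replacing the single scalar element $g \in A$ with the natural isomorphism $g = \{g_X\}_{X \in \mathcal{C}}$ furnished by Lemma~\ref{lem:D0-lift-g}. On morphisms I will set $D(f) := f^\vee$, which immediately ensures $U \circ D = D_0$ on both objects (the underlying $A$-module structures coincide with those produced by $D_0$) and morphisms. It thus remains to verify three things: (a) $D(X)$ is genuinely a $*$-representation of $A$ with respect to~\eqref{eq:D0-lift-ip}; (b) $D(X)$ lies in $\mathcal{C}$; and (c) $f \mapsto f^\vee$ is contravariantly $*$-functorial with respect to the new inner products.

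For (a), Riesz identifies the $A$-action as $a \cdot \phi_x = \phi_{S(a)^* x}$. Expanding both sides of the identity $\langle a \phi_x | \phi_y \rangle_{D(X)} = \langle \phi_x | a^* \phi_y\rangle_{D(X)}$ via~\eqref{eq:D0-lift-ip} reduces the claim to the pointwise equation $g_X(S(a)^* x) = S(a^*) g_X(x)$. The naturality property~\eqref{eq:D0-lift-g-3} rewrites the left-hand side as $S^2(S(a)^*) g_X(x)$; the defining identity $S(S(a)^*)^* = a$ from~\eqref{eq:RF-st-alg-2}, taken once with adjoint and then with $S$ applied to both sides, yields $S^2(S(a)^*) = S(a^*)$, closing the loop.

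For (b), since $D$ agrees with $D_0$ on the underlying $A$-module, and since our standing hypothesis on $\{X_i\}_{i \in I}$ implies $D_0(X_i) \cong X_{i^\vee} \in \mathcal{C}$ for each $i$, we obtain $D(X) \cong X_{i_1^\vee} \oplus \dotsb \oplus X_{i_m^\vee}$ as an $A$-module. Two finite-dimensional $*$-representations isomorphic as $A$-modules are automatically unitarily isomorphic (by Schur together with the uniqueness, up to a positive scalar, of the inner product making a finite-dimensional irreducible $A$-module into a $*$-representation), so $D(X) \in \mathcal{C}$. For (c), a direct computation mimicking the proof of Lemma~\ref{lem:RF-st-alg-dual-str} — but using the naturality $g_Y \circ f = f \circ g_X$ from~\eqref{eq:D0-lift-g-4} in place of the scalar case's trivial commutation of $g$ with $f$ — establishes $(f^\vee)^* = (f^*)^\vee$ with respect to the modified inner products on $D(X)$ and $D(Y)$.

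The main obstacle is (a); the rest is essentially formal once Lemma~\ref{lem:D0-lift-g} is available. The subtle point in (a) is that the anti-algebra map $S$ is generally not an involution, so the $*$-representation axiom cannot follow from a scalar argument — it requires the defining property of $g_X$ (converting the $A$-action into its $S^2$-twist) to conspire precisely with the algebraic identity~\eqref{eq:RF-st-alg-2} characterizing anti-algebra maps associated with real forms. Once (a) is settled, (b) and (c) are bookkeeping, and combining them produces the desired lift $D$ of $D_0$ on $\mathcal{C}$.
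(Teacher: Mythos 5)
Your proposal is correct and is essentially the argument the paper intends: the paper omits the proof, saying only that it goes ``in a similar way to \S\ref{subsec:RF-st-alg}'', and your verification is exactly that adaptation --- the $*$-representation axiom for $D(X)$ reduces via \eqref{eq:D0-lift-g-3} and \eqref{eq:RF-st-alg-2} to $S^2(S(a)^*)=S(a^*)$, and $*$-functoriality of $f\mapsto f^\vee$ follows from the naturality \eqref{eq:D0-lift-g-4} replacing the scalar commutation in Lemma~\ref{lem:RF-st-alg-dual-str}. (Your unitary-isomorphism remark in (b) is more than needed, since membership in $\mathcal{C}$ only requires an isomorphism in $\Rep(A)$, but it is harmless.)
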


Hence, by Proposition~\ref{prop:D0-lift}, there exists a natural isomorphism $\eta: \id_{\mathcal{C}} \to D D$ such that the pair $(D, \eta)$ is a $*$-compatible dual structure for $\mathcal{C}$. In our situation, we can construct $\eta$ by using $g$ of Lemma \ref{lem:D0-lift-g}. We need to prove:

\begin{lemma}
  \label{lem:D0-lift-g-dual}
  Let $g$ be the natural isomorphism of Lemma \ref{lem:D0-lift-g}. Then
  \begin{equation*}
    g_{D(X)} \circ (g_X)^{\vee} = \id_{D(X)}
    \quad (X \in \mathcal{C}).
  \end{equation*}
\end{lemma}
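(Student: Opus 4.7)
The strategy is to identify the dual operator $h_X := (g_X^{-1})^{\vee}: D(X) \to D(X)$ with $g_{D(X)}$; once this identification is established, the claim is immediate from the contravariance of dualization:
\begin{equation*}
g_{D(X)} \circ (g_X)^{\vee} = (g_X^{-1})^{\vee} \circ (g_X)^{\vee} = (g_X \circ g_X^{-1})^{\vee} = \id_{D(X)}.
\end{equation*}
I will verify that $h_X$ satisfies the three properties which characterize $g_{D(X)}$ via the uniqueness clause of Lemma~\ref{lem:D0-lift-g}, and then pin down $h_X = g_{D(X)}$ by Schur's lemma. A short computation using the self-adjointness of $g_X^{-1}$ on $X$ yields the key formula $h_X(\phi_x) = \phi_{g_X^{-1}(x)}$, from which $\langle \phi_y \mid h_X(\phi_x)\rangle_{D(X)} = \langle g_X^{-1}(x) \mid g_X(y)\rangle_X = \langle x \mid y\rangle_X$; this simultaneously delivers the positivity and self-adjointness of $h_X$ in the inner product of $D(X)$. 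The trace condition is equally easy: $\Trace(h_X) = \Trace((g_X^{-1})^{\vee}) = \Trace(g_X^{-1}) = \Trace(g_X) = \Trace(h_X^{-1})$ by~\eqref{eq:D0-lift-g-1}.

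The main obstacle will be the $S^2$-twisted intertwining property $h_X(a\cdot\phi_x) = S^2(a)\cdot h_X(\phi_x)$. Unwinding the $A$-action on $D(X)$ sends $a\cdot\phi_x$ to $\phi_{S(a)^{*} x}$, and after moving $g_X^{-1}$ past $S(a)^{*}$ via the consequence $g_X^{-1} c = S^{-2}(c) g_X^{-1}$ of~\eqref{eq:D0-lift-g-3}, the desired identity reduces to the algebraic relation $S^{-2}(S(a)^{*}) = S^3(a)^{*}$. This in turn follows by two applications of $S^{-1}(c^{*}) = S(c)^{*}$, which is itself a direct rearrangement of the defining relation $S(S(a)^{*})^{*} = a$ of the anti-algebra map $S$. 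No additional hypothesis is required, but some bookkeeping is needed to get the powers of $S$ right.

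To conclude $h_X = g_{D(X)}$, I will argue pointwise on irreducibles: for $X = X_i$, both $h_{X_i}$ and $g_{D(X_i)}$ are positive $S^2$-twisted intertwiners of the simple $A$-module $D(X_i)$, so by Schur they differ by a positive scalar $c_i$, and comparing $\Trace(h_{X_i}) = \Trace(h_{X_i}^{-1})$ with $\Trace(g_{D(X_i)}) = \Trace(g_{D(X_i)}^{-1})$ forces $c_i = c_i^{-1}$, hence $c_i = 1$. For a general $X \in \mathcal{C}$, the canonical decomposition~\eqref{eq:D0-lift-can-iso} together with the very definition of $g_X$ displays $g_X$ as a direct sum of the $g_{X_i}$'s on the isotypic components of $X$; dualizing, $h_X = (g_X^{-1})^{\vee}$ splits as a direct sum of the $h_{X_i}$'s, while $g_{D(X)}$ splits correspondingly via the induced decomposition of $D(X)$, so the irreducible case concludes the argument.
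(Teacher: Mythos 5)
Your proof is correct and follows essentially the same route as the paper: reduce to the irreducible objects $X_i$, use Schur's lemma to compare $(g_{X}^{-1})^{\vee}$ with $g_{D(X)}$ up to a scalar, and then use the normalization $\Trace(g_{X_i}) = \Trace(g_{X_i}^{-1})$ together with positivity to force the scalar to equal $1$, extending to general $X \in \mathcal{C}$ through the decomposition \eqref{eq:D0-lift-can-iso}. The only difference is one of bookkeeping: you verify explicitly (the $S^2$-twisted intertwining via $S^{-2}(S(a)^{*}) = S^{3}(a)^{*}$, and the positivity of $(g_X^{-1})^{\vee}$ with respect to the inner product of $D(X)$) what the paper subsumes in the single assertion that both $(g_X)^{\vee}$ and $(g_{D(X)})^{-1}$ are natural isomorphisms $S^{2\natural} U D \to U D$.
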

\begin{proof}
  Set $\alpha_X = (g_X)^\vee$ and $\beta_X = (g_{D(X)})^{-1}$ for $X \in \mathcal{C}$. Since both $\alpha = \{ \alpha_X \}_{X \in \mathcal{C}}$ and $\beta = \{ \beta_X \}_{X \in \mathcal{C}}$ are natural isomorphisms $S^{2\natural} U D \to U D$, it is sufficient to show that $\alpha_{X_i} = \beta_{X_i}$ holds for all $i \in I$. Let $i \in I$. By Schur's lemma, $\beta_{X_i} = c \cdot \alpha_{X_i}$ for some $c \in \mathbb{C}^{\times}$. By \eqref{eq:D0-lift-g-1}, we have
  \begin{gather*}
    \Trace(\alpha_{X_i}) = \Trace(g_{X_i}) = \Trace(g_{X_i}^{-1}) = \Trace(\alpha_{X_i}^{-1}).
  \end{gather*}
  We also obtain $\Trace(\beta_{X_i}) = \Trace(\beta_{X_i}^{-1})$ in a similar way. Hence,
  \begin{equation*}
    c \Trace(\alpha_{X_i}) = \Trace(c \alpha_{X_i}) = \Trace(\beta_{X_i}) = \Trace(\beta_{X_i}^{-1})
    = \Trace(c^{-1} \alpha_{X_i}^{-1}) = c^{-1} \Trace(\alpha_{X_i}).
  \end{equation*}
  Now we conclude $c = 1$ by the same way as the proof of Lemma~\ref{lem:D0-lift-g}.
\end{proof}

Define $\eta: \id_{\mathcal{C}} \to D D$ by the same formula as \eqref{eq:dual-module-1} but by using the natural isomorphism $g$ instead of $g \in A$. We also define an anti-linear functor $J: \mathcal{C} \to \mathcal{C}$ and a natural isomorphism $i: \id_{\mathcal{C}} \to J J$ in a similar manner. Then, again in the same way as \S\ref{subsec:RF-st-alg}, we prove:

\begin{proposition}
  \label{prop:D0-lift-equiv}
  The pair $\mathcal{D} = (D, \eta)$ is a $*$-compatible dual structure for $\mathcal{C}$ and the pair $\mathcal{J} = (J, i)$ is a $*$-compatible real structure for $\mathcal{C}$. The dual structures $\mathbb{D}(\mathcal{J})$ and $\mathcal{D}$ are unitary equivalent via the natural isomorphism
  \begin{equation*}
    \varphi_X: J(X) \to D(X), \quad \overline{x} \mapsto \phi_x \quad (x \in X \in \mathcal{C}).
  \end{equation*}
\end{proposition}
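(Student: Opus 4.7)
The strategy is to carry out the construction of \S\ref{subsec:RF-st-alg} verbatim, but with the positive central element $g \in A$ replaced by the natural isomorphism $g = \{g_X\}_{X \in \mathcal{C}}$ of Lemma~\ref{lem:D0-lift-g}. The point is that the three properties of $g$ that were used in \S\ref{subsec:RF-st-alg} -- that $g$ is positive, that $S^2(a) = g a g^{-1}$, and that $S(g) = g^{-1}$ -- now have analogues: the positivity of each $g_X$, the intertwining relation \eqref{eq:D0-lift-g-3}, and Lemma~\ref{lem:D0-lift-g-dual}, respectively. The naturality \eqref{eq:D0-lift-g-4} will play the role of $g$ being central.

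First I would check that $\mathcal{D} = (D,\eta)$ is a $*$-compatible dual structure. That $D$ lifts $D_0$ means $D$ is a contravariant $\mathbb{C}$-linear $*$-functor on $\mathcal{C}$, so it suffices to verify that $\eta$ is a well-defined unitary natural isomorphism $\id_{\mathcal{C}} \to D D$ satisfying the duality axiom \eqref{eq:dual-str-1}. $A$-linearity of $\eta_X$ follows from \eqref{eq:D0-lift-g-3} exactly as in \S\ref{subsec:DS-alg} (with $S^2(a) = g a g^{-1}$ replaced by $g_X \circ a = S^2(a) \circ g_X$), naturality of $\eta$ follows from naturality \eqref{eq:D0-lift-g-4} of $g$, unitarity of $\eta_X$ is a direct inner-product computation using positivity of $g_X$ (parallel to the last display in the proof of Lemma~\ref{lem:RF-st-alg-dual-str}), and the duality axiom
\[
D(\eta_X) \circ \eta_{D(X)} = \id_{D(X)}
\]
translates, via the definition of $\eta$, into the identity $g_{D(X)} \circ (g_X)^\vee = \id_{D(X)}$, which is exactly Lemma~\ref{lem:D0-lift-g-dual}.

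Next I would check that $\mathcal{J} = (J,i)$ is a $*$-compatible real structure. The functoriality of $J$ and the naturality of $i$ are formal, and the fact that $J(X)$ is a $*$-representation with the prescribed inner product is again \eqref{eq:D0-lift-g-3} combined with positivity of $g_X$. The coherence condition $J(i_X) = i_{J(X)}$ is immediate from the definition of the bar functor, and $i_X$ is unitary by the same positivity argument as for $\eta_X$.

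Finally, for the equivalence $\varphi: \mathbb{D}(\mathcal{J}) \to \mathcal{D}$, Lemma~\ref{lem:Riesz-1} already gives $\varphi_X$ as a natural isomorphism of underlying complex vector spaces; its $A$-linearity $J(X) \to D(X)$ follows from \eqref{eq:Riesz-A-linear}, and its unitarity from matching the two inner products, both of which are defined by means of the same operator $g_X$. The morphism-of-dual-structures condition is verified by the identical two-line calculation as in the proof of Lemma~\ref{lem:RF-real-dual-equiv}, since the only ingredient used there was the formula $\langle \varphi_X(y), gx \rangle = \langle y \mid gx \rangle$, whose analogue here is $\langle \varphi_X(y), g_X(x) \rangle = \langle y \mid g_X(x) \rangle$. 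The main (and only) conceptual obstacle is the duality axiom \eqref{eq:dual-str-1}; everything else is a translation of the arguments in \S\ref{subsec:RF-st-alg} from the algebra-element level to the level of natural transformations, and that axiom is precisely why Lemma~\ref{lem:D0-lift-g-dual} was established in the preceding subsection.
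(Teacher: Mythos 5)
Your proposal is correct and follows essentially the same route as the paper, which itself only indicates that the proof is carried out ``in the same way as \S\ref{subsec:RF-st-alg}'' with the element $g\in A$ replaced by the natural isomorphism $g=\{g_X\}$ of Lemma~\ref{lem:D0-lift-g}, with \eqref{eq:D0-lift-g-3} and \eqref{eq:D0-lift-g-4} supplying $A$-linearity and naturality of $\eta$ and $i$, and with Lemma~\ref{lem:D0-lift-g-dual} playing exactly the role of $S(g)=g^{-1}$ in the duality axiom \eqref{eq:dual-str-1}, as you identified. (One small phrasing point: in \S\ref{subsec:RF-st-alg} the element $g$ is not central, and naturality there comes from $A$-linearity of morphisms; \eqref{eq:D0-lift-g-4} is the analogue of that, not of centrality, but this does not affect your argument.)
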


\subsection{Proofs of some propositions}

\begin{proof}[Proof of Proposition~\ref{prop:BNS}]
  Let $\{ X_i \}_{i \in I}$ be the complete set of representatives of simple left $A$-modules. Since $A$ is assumed to be a finite-dimensional $C^*$-algebra, we may assume that all $X_i$'s are $*$-representations. Applying the arguments of \S\ref{subsec:D0-lift-criterion} to $\{ X_i \}$, we obtain a natural isomorphism $g$ as in Lemma~\ref{lem:D0-lift-g}. Since $A$ is finite-dimensional, there uniquely exists an element $g \in A$ such that
\begin{equation*}
  g_X(x) = g \cdot x \quad (x \in X \in \fdRep(A))
\end{equation*}
Interpreting~\eqref{eq:D0-lift-g-1}-\eqref{eq:D0-lift-g-3} in terms of the element $g$, we see that the element $g$ has the desired properties. Uniqueness of such an element follows from Lemma~\ref{lem:D0-lift-g}. $S(g) = g^{-1}$ follows from Lemma~\ref{lem:D0-lift-g-dual}.
\end{proof}

\begin{proof}[Proof of Proposition~\ref{prop:BNS-cog}]
  Let $\{ V_i \}_{i \in I}$ be the complete set of representatives of simple $C$-comodules. By Proposition~\ref{prop:compact-coalg}, we may assume that all $V_i$'s are $*$-\hspace{0pt}corepresentations. Now let $A = C^*$ be the dual algebra of $C$ and define $S: A \to A$ by~\eqref{eq:RF-dual-st-alg}. Applying the arguments of \S\ref{subsec:D0-lift-criterion} to $\{ X_i \}$, where $X_i = \Phi(V_i) \in \fdRep(A)$, we obtain a natural isomorphism $g$ as in Lemma~\ref{lem:D0-lift-g}.

  Since $C = \bigoplus_{i \in I} C(V_i)$ (see the proof of Proposition~\ref{prop:compact-coalg}), we can define uniquely a linear map $\gamma: C \to \mathbb{C}$ by the following condition:
  \begin{equation*}
    \langle \gamma, v_{(1)} \rangle v_{(0)} = g_i(v) \quad (i \in I, v \in V_i).
  \end{equation*}
  Now the claim is proved by interpreting the properties of the natural isomorphism $g$ in a similar way as Proposition~\ref{prop:BNS}.
\end{proof}


\end{document}